\theoremstyle{plain} 
    \newtheorem{theorem}{Theorem}
    \newtheorem{lemma}[theorem]{Lemma}
    \newtheorem{proposition}[theorem]{Proposition}
    \newtheorem{corollary}[theorem]{Corollary}
\theoremstyle{definition} 
    \newtheorem{definition}[theorem]{Definition}
    \newtheorem{remark}[theorem]{Remark}
\def\bb{\mathbb}
\def\<{\langle}
\def\>{\rangle}
\def\bar{\overline}
\def\P{{\bf P}}
\newcommand\tr{{\mbox{\rm tr}}}
\newcommand\floor[1]{\left\lfloor #1 \right\rfloor}
\newcommand\norm[1]{\left\| #1 \right\|}
\newcommand\vnorm[1]{\left\| #1 \right\|_2}
\newcommand\fnorm[1]{\left\| #1 \right\|_2}
\newcommand\abs[1]{\left| #1 \right|}
\newcommand\angles[1]{\left\langle #1 \right\rangle}
\newcommand\braces[1]{\left\{ #1 \right\}}
\newcommand\lt{\left}
\newcommand\rt{\right}
\newcommand\mnote[1]{} 
\newcommand\be{\begin{equation*}}
\newcommand\ee{\end{equation*}}
\newcommand\ben{\begin{equation}}
\newcommand\een{\end{equation}}
\newcommand\bes{\begin{eqnarray*}}
\newcommand\ees{\end{eqnarray*}}
\newcommand{\dist}{\mbox{\rm dist}}
\newcommand{\Span}{\operatorname{Span}}
\newcommand{\sm}{{\raise0.3ex\hbox{$\scriptstyle \setminus$}}}
\renewcommand{\phi}{\varphi}
\renewcommand\Pr[1]{\,\mathbb P\,\lt(\,#1\,\rt)\,}
\def\CHI{\mathchoice%
{\raise2pt\hbox{$\chi$}}%
{\raise2pt\hbox{$\chi$}}%
{\raise1.3pt\hbox{$\scriptstyle\chi$}}%
{\raise0.8pt\hbox{$\scriptscriptstyle\chi$}}}
\def\smalloplus{\raise1pt\hbox{$\,\scriptstyle \oplus\;$}}
\newcommand{\deltane}{\delta_{n,\epsilon}}
\newcommand\vv[1]{\mathbf v_{#1}} 
\newcommand\zbar{\bar z}
\author{
Philip Matchett Wood\thanks{Department of Mathematics, University of Wisconsin-Madison, 480 Lincoln Dr., Madison, WI 53706 \mbox{pmwood@math.wisc.edu}.  This work was partially supported by an NSF postdoctoral grant.}} 
\title{Universality of the ESD for a fixed matrix plus small random noise: a stability approach}
\begin{document}
\date{ \today}
\maketitle

\abstract{
We study the empirical spectral distribution (ESD) in the limit where
$n\to\infty$ of a fixed $n$ by $n$ matrix $M_n$ plus small random noise of the
form $f(n) X_n$, where $X_n$ has iid mean 0, variance $1/n$ entries and
$f(n)\to 0$.  It is known for certain $M_n$, in the case where $X_n$ is iid complex Gaussian, that the limiting distribution of the ESD of $M_n+f(n)X_n$ can be dramatically different from that for $M_n$.  We prove a general universality result showing, with some conditions on $M_n$ and $f(n)$, that the limiting distribution of the ESD does not depend on the type of distribution used for the random entries of $X_n$.  We use the universality result to exactly compute the limiting ESD for two families where it was not previously known.  The proof of the main result incorporates the Tao-Vu replacement principle and a version of the Lindeberg replacement strategy, along with the newly-defined notion of 
stability of sets of rows of a matrix. 
}

\section{Introduction}

Given an $n$ by $n$ complex matrix $A$, we define the \emph{empirical
spectral distribution} (which we will abbreviate \emph{ESD}), to be the
following discrete probability measure on $\bb C$: 
$$\mu_{A}(z):= 
\frac 1n \sum_{j=1}^n \delta_{\lambda_j},
$$ 
where $\lambda_1,\lambda_2,\ldots,\lambda_n$
are the eigenvalues of $A$ with multiplicity and $\delta_x$ is the Dirac
measure centered at $x$.  For a sequence of random matrices $A_n$, we say that $\mu_{A_n}$ \emph{converges in probability} to another probability measure $\mu$ if for every smooth, compactly supported test function $g:\bb C \to \bb C$ we have that $\int_{\bb C} g \, d\mu_{A_n}$ converges in probability to $\int_{\bb C} g\, d \mu$. 

Questions about the limiting distribution of the ESD of random matrices started in the 1950s and have generated much recent interest.  The Circular Law
states that the ESD of a random $n$ by $n$ matrix $X_n$ with iid mean 0, variance $1/n$
entries converges to the uniform measure on the unit disk (see, for
example, \cite{BChafai2012,TaoRMBook2012,TVuK2008} and references therein).
Low rank perturbations of random matrices with iid entries do not change the limiting bulk ESD, even for perturbations up to rank $o(n)$ (see \cite[Corollary~1.12]{TVuK2008}, \cite{Chafai2010}, and \cite{Bordenave2011}); however, such perturbations can produce outlier eigenvalues---see \cite{Tao2013, Tao2013a}.

Limiting distributions of ESDs of an entirely different type of random matrix---based on uniform Haar measure---have also generated much interest, including the recent work \cite{GKZeitouni2011} proving the Single Ring Theorem (there is an
interesting outlier phenomenon for the Single Ring Theorem as well, see
\cite{BRochet2013}).  It is shown in \cite[Proposition 4]{GKZeitouni2011} that adding polynomially small iid complex Gaussian noise
expands the class of random matrices to which the Single Ring Theorem applies,
essentially removing a hypothesis about the smallest singular value.  This fact
inspired further work \cite{GWZeitouni2011} studying how adding polynomially small iid complex Gaussian noise can change the limiting ESD---in some cases quite dramatically---of a sequence of fixed matrices, and that it turn lead the current paper to study the effects on the ESD of adding small iid \emph{non}-Gaussian random noise.

We will consider the case where $M_n$ is a fixed sequence of $n$ by $n$ complex matrices, to which we will add small random noise to get $A_n = M_n+f(n)X_n$, where $X_n$ is a random complex $n$ by $n$ matrix with iid mean 0, variance $1/n$ entries, and $f(n) \to 0 $ as $n\to \infty$.  We will refer to the case where $f(n)= n^{-\gamma}$ for some $\gamma>0$ as \emph{polynomially small random noise}, and we will refer to the general case as \emph{random noise scaled by $f(n)$}.  The initial motivation for this paper is the fact that, in some natural cases, the ESDs of the perturbed matrix $A_n$ and fixed matrix $M_n$ are very different, even when the random perturbation is very small, e.g. $f(n)=n^{-100}$.  For example, the $n$ by $n$ nilpotent matrix
 \begin{equation}
   \label{eq-tN}
T_n=\left(\begin{array}{lllll}
  0&1&0&\cdots&0\\
  0&0&1&\ddots&\vdots\\
\vdots&\ddots&\ddots&\ddots&0\\
0&\ddots&\ddots&0&1\\
0&\ddots&\ddots&\ddots&0
\end{array}
\right)\,.\end{equation}
has only zero as an eigenvalue, with multiplicity $n$.  However, if one sets $f(n)= n^{-\gamma}$ for some $\gamma > 0$ and $X_n$ has iid complex Gaussian entries scaled by $1/n$, then the ESD of $A_n= M_n+ f(n) X_n$ converges in probability to the uniform distribution on the unit circle $\{z: \abs{z} = 1\}$ (proven in \cite{GWZeitouni2011}; see also \cite{Sniady2002}).  Figure~\ref{Tnfig} plots  the eigenvalues for $\gamma =10$ and $n=$ 50, 500, and 5000.

\begin{figure}
{\bf \large \qquad $n=50$ \hfill $n=500$ \hfill $n=5000$\qquad\quad\color{white}.\color{black}}
\parbox{7in}{\hspace{-1.5cm} \scalebox{.4}{\includegraphics{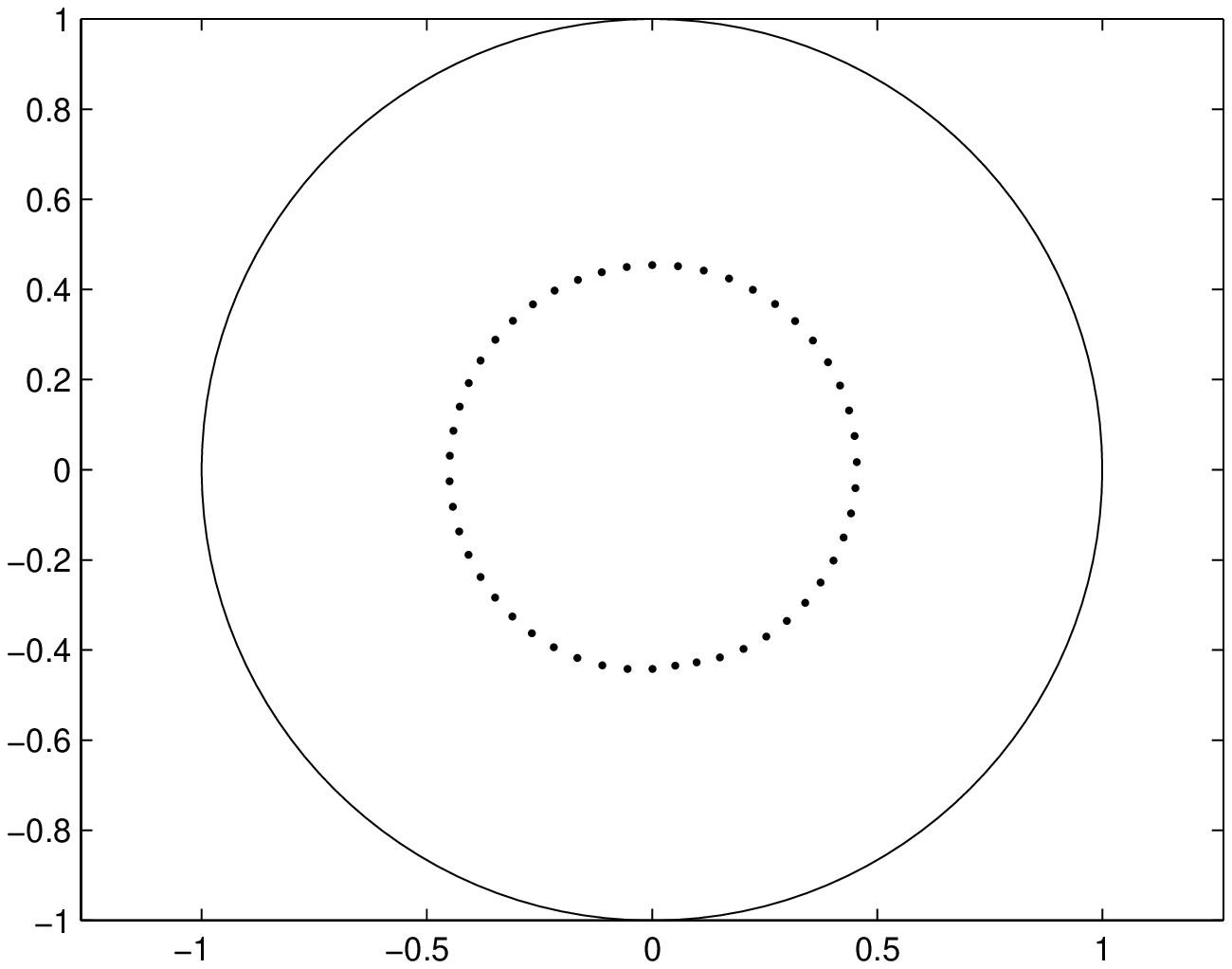}}\scalebox{.4}{\includegraphics{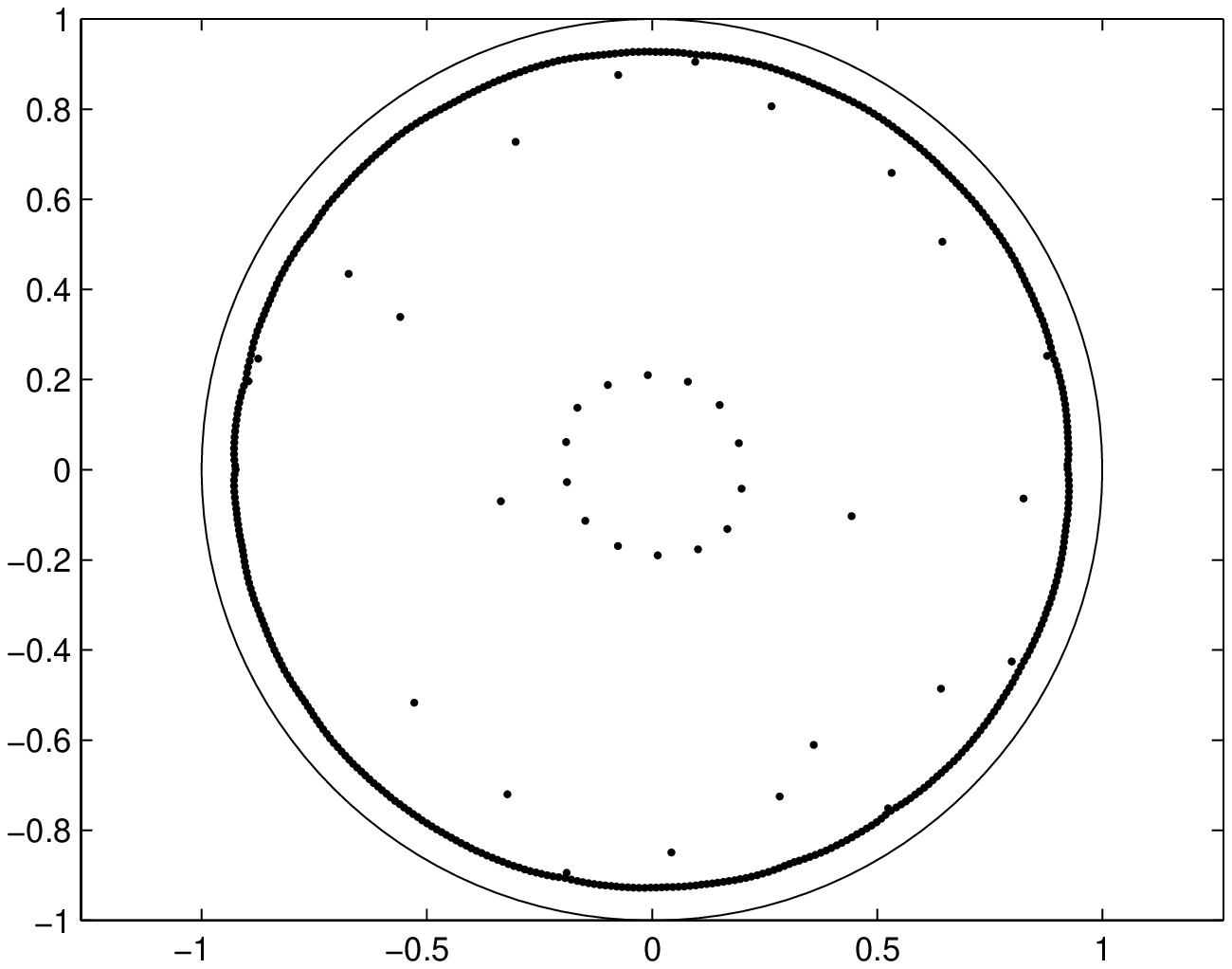}}\scalebox{.4}{\includegraphics{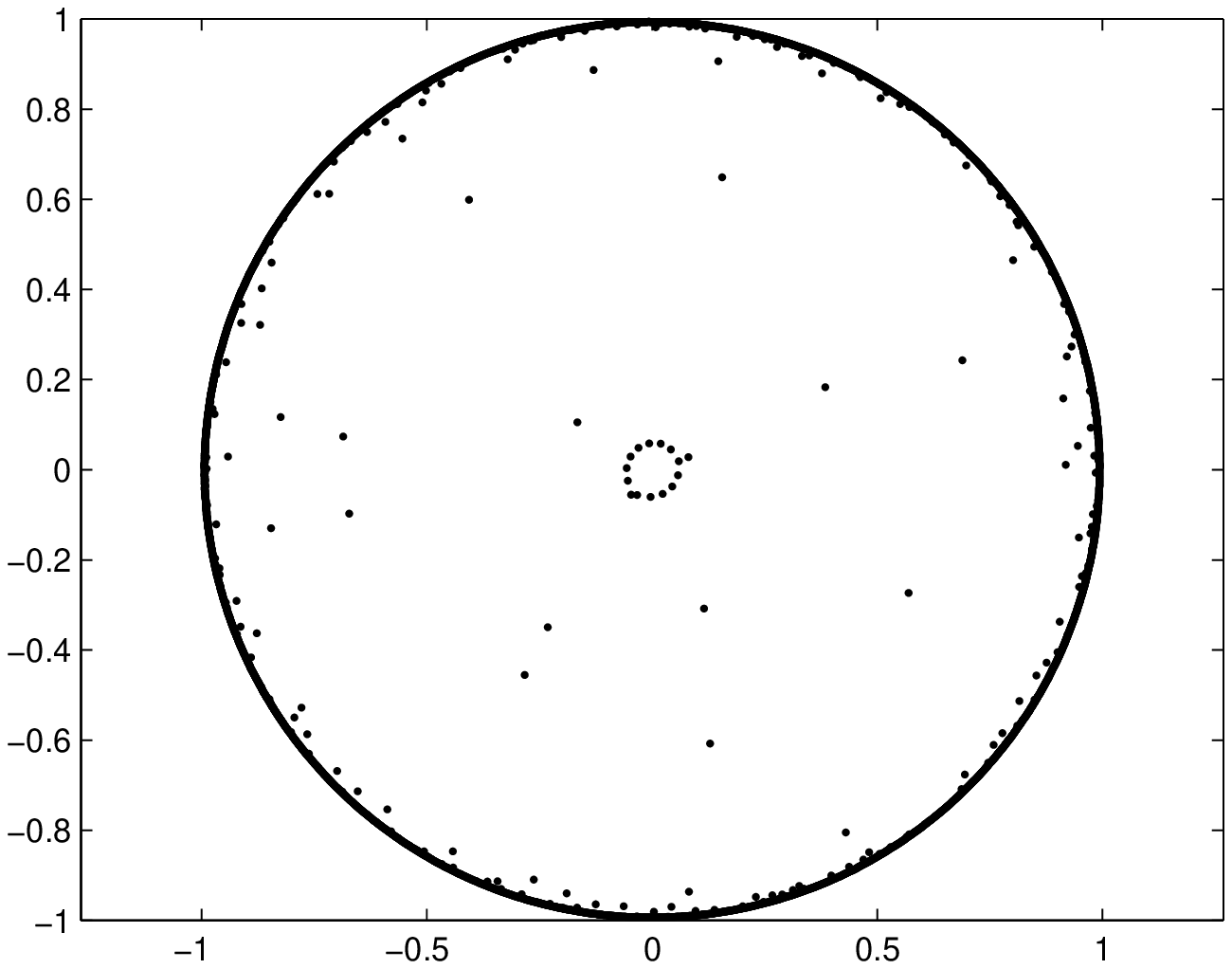}}}
\caption{The eigenvalues of $T_n$ (see equation~\eqref{eq-tN}) plus $n^{-10}X_n$, where $X_n$ is an $n$ by $n$ random iid Gaussian matrix.  A circle is of radius 1 centered at the origin is plotted for comparison (not visible when $n=5000$). With no perturbation, all eigenvalues of $T_n$ equal zero.}
\label{Tnfig}
\end{figure}

Interestingly, the ESD of $T_n$ remains unstable even after polynomially small noise is added, in the sense that a low-rank perturbation (namely rank $o(n)$) of $T_n + n^{-\gamma} X_n$ can change the limiting ESD---see \cite[Corollary~8]{GWZeitouni2011}.  This contrasts with low-rank perturbations of the Circular Law, in which case any rank $o(n)$ perturbation added to the random matrix $X_n$ still has ESD that converges to uniform on the unit disk (see \cite[Corollary~1.12]{TVuK2008}, \cite{Chafai2010}, and \cite{Bordenave2011}).

The matrix $T_n$ shows that the ESD can be very sensitive to small perturbations (this has been noted before; see \cite{Sniady2002,GWZeitouni2011}).  In this paper, we approach the related universality question: ``Is the ESD of a fixed matrix sensitive to the \emph{type} of randomness in a small perturbation?''  For example, in Figure 1, would the ESD plots look the same if the perturbation $X_n$ had iid entries that were Bernoulli $+1/\sqrt n$ or $-1/\sqrt n$ each with probability $1/2$, rather than complex Gaussian?

The first step towards answering this question was taken in \cite[Remark 3]{GWZeitouni2011}, where it is noted (thanks to a comment by R. Vershynin) that in fact the main result of \cite{GWZeitouni2011} extends to the case where the noise matrix has entries that are iid and possess a bounded density.  
Of course, the bounded density assumption excludes Bernoulli random matrices.  The approach in the current paper will not require entries to have bounded density.

In \cite{TVuK2008}, Tao and Vu (with an appendix by Krishnapur) develop a general replacement principle (Theorem~\ref{TVuK2.1} below) that shows convergence of ESDs for random matrix models $A_n$ and $B_n$ if the log-determinants of $A_n+zI_n$ and $B_n+zI_n$ converge for almost every fixed complex number $z$, where $I_n$ is the $n$ by $n$ identity matrix.  This is the framework for the approach in the current paper: if a small perturbation does not change the log-determinant of $A_n+zI_n$, we can use the replacement principle to prove convergence of the ESDs.  

Our focus is on matrices $M_n+zI_n$ where some (usually many) of the rows satisfy the following stability condition for almost every complex number $z$.

\begin{definition}[$\epsilon$-stable]
 \label{stabdef}
A set of vectors $\{\vv 1, \vv 2,\dots, \vv k\}$ is $\epsilon$-\emph{stable} if
$$\dist(\vv j, \Span\{\vv i: 1 \le i \le k, i \ne j\}) \ge \epsilon$$
for all $1\le j\le k$.  In general, $\epsilon$ will be a function of $n$ and other parameters. 
\end{definition}

The $\epsilon$-stable property is reasonably general; for example, a random matrix $R_n$ with iid mean zero, variance $1/n$ entries---thus the row vectors each have expected norm one---contain a set of $n(1-o(1))$ rows that are $\epsilon$-stable for $\epsilon \ge n^{-1/12}/2$ (see Proposition~\ref{p:manystable}); this is also true if $zI_n$ is added to the matrix.  

The $\epsilon$-stability property quantifies the smallest amount one vector would have to be perturbed in order to fall into the span of the remaining vectors.  Intuitively, one might think that the ESD of a matrix with all rows $\epsilon$-stable would not change much under a perturbation that was much smaller than $\epsilon$.  For example, the set of all rows of a diagonal matrix $D_n$ plus $zI_n$ is always at least $\Theta(1)$-stable for almost every $z\in \bb C$ (note $z$ is a constant); thus, one would expect (correctly) that a small $o(1)$ perturbation of $D_n$ has no effect on the limiting ESD (this follows from the Ger{\v s}gorin Circle Theorem \cite{Gersgorin1931,Varga2004}, for example).

However, having many $\epsilon$-stable rows is not the whole story.  By inductive computation, the first $n-1$ rows of the matrix $T_n+zI_n$ are $\Theta(1)$-stable (see Lemma~\ref{Tbn-epstab}); and yet, a small perturbation results in a dramatic change to the ESD as shown in Figure~\ref{Tnfig}. The issue is that when $\abs z$ is small, the last row of $T_n+zI_n$ is only distance $O(|z|^n)$ from the span of the first $n-1$ rows, allowing a small perturbation to produce large changes in the ESD.  

It turns out that we can use bounds on the smallest singular value from \cite{TVu2008} and the replacement principle approach from \cite{TVuK2008} to ignore a small fraction of the rows (in fact, any number $g(n)= o(n/\log n)$ can be ignored).  This allows us to use the $\epsilon$-stability property on the remaining rows to show that the limiting ESD does not depend on the type of randomness in the perturbation.  Our main result (Theorem~\ref{mainthm}) shows for a large class of matrices $M_n$ and for small perturbations that while the ESDs of $M_n+f(n)X_n$ and $M_n$ may differ, the limiting distribution of the ESD of $M_n+f(n)X_n$ is unchanged if the random noise $X_n$ is replaced by a different random matrix ensemble with iid mean 0, variance 1 entries.

\begin{theorem}[Universality of small random noise]\label{mainthm}
Let $M_n$ be sequence of complex $n$ by $n$ matrices satisfying  
\begin{equation}
 \label{e:Mcond}
\sup_n \frac 1n \fnorm{M_n}^2 <\infty.
\end{equation}
Let $x$ and $y$ be complex random variables with mean 0 and variance 1, and let $\Phi_n$ and $\Psi_n$ be $n$ by $n$ matrices having iid entries $x/\sqrt n$ and $y/\sqrt n$, respectively.

Let $A_n=M_n+n^{-\gamma}\Phi_n$ and let $B_n=M_n + n^{-\gamma}\Psi_n$, where $\gamma>1.5$ is a constant.
Assume for almost every $z \in \bb C$ that there is a set
$S$ of at least $n-n/\log^{1.1} n$ rows of $M_n+zI$ that is $\epsilon$-stable, where 
$$\frac{n^{3/4-\gamma/2}\log n}{\epsilon} \to 0\mbox{ as } n\to\infty.$$
Then $\mu_{A_n}- \mu_{B_n}$ converges in probability to zero as $n\to\infty$. 
\end{theorem}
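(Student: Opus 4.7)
The plan is to apply the Tao--Vu--Krishnapur replacement principle (Theorem~2.1 of \cite{TVuK2008}) to reduce the convergence $\mu_{A_n}-\mu_{B_n}\convp 0$ to verifying two conditions: a uniform second-moment bound on $\frac{1}{n}\fnorm{A_n}^2$ and $\frac{1}{n}\fnorm{B_n}^2$ (immediate from \eqref{e:Mcond} together with $\fnorm{n^{-\gamma}\Phi_n}^2 = O(n^{1-2\gamma}) = o(1)$), and the pointwise log-determinant comparison
\begin{equation*}
  \tfrac{1}{n} \log|\det(A_n+zI)| - \tfrac{1}{n} \log|\det(B_n+zI)| \convp 0
\end{equation*}
for almost every $z\in\bb C$. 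The remainder of the proof is devoted to this comparison. Writing $|\det A| = \prod_j d_j^*(A)$ via the QR-type identity, where $d_j^*(A)$ is the distance from row $j$ of $A$ to the span of rows $1,\ldots,j-1$, and ordering the rows so that those in the $\epsilon$-stable set $S$ come first, we split $\log|\det A|$ into a stable and an unstable contribution.

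For the $|S^c|\le n/\log^{1.1} n$ unstable rows, I would invoke the smallest-singular-value bound from \cite{TVu2008}: with probability $1-o(1)$, $\sigma_{\min}(M_n+zI+n^{-\gamma}X_n) \ge n^{-C}$ for some constant $C$ depending only on $\gamma$ and the bound \eqref{e:Mcond}, so $d_j^*(A) \ge n^{-C}$. Combined with the trivial upper bound $d_j^* \le \|r_j\|$, each unstable row contributes $|\log d_j^*| = O(\log n)$, for a total bad-row contribution to $\log|\det A_n|$ of $O(n/\log^{0.1} n) = o(n)$---negligible after normalization by $1/n$ and subtracted off identically between $A_n$ and $B_n$. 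For the stable rows $j\in S$, the $\epsilon$-stability of $S$ combined with the operator-norm bound $\|n^{-\gamma}X_n\|_{\mathrm{op}} = O(n^{-\gamma}) = o(\epsilon)$ ensures $d_j^*(A) \ge \epsilon/2$, which provides the quantitative input for the next step.

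The heart of the argument is a Lindeberg-style entry-by-entry replacement on the noise matrix. At each step, one swaps an entry at position $(j_0,k_0)$ with $j_0\in S$; letting $A_0$ denote the matrix with this entry zeroed out, the matrix determinant lemma gives
\begin{equation*}
  \log|\det(A_0 + tE_{j_0,k_0})| = \log|\det A_0| + \log|1+t\alpha|, \qquad \alpha = (A_0^{-1})_{k_0,j_0},
\end{equation*}
where $t = n^{-\gamma-1/2}\xi$ and $\xi$ is the new random entry. Taylor-expanding $\log|1+t\alpha|$ to third order in $t$, the linear-in-$t$ expectation vanishes, the quadratic-in-$t$ expectation depends only on the first two moments of $\xi$ and so cancels in the $\Phi_n\to\Psi_n$ swap (modulo a pseudo-variance mismatch handled by a preliminary truncation), and the cubic remainder is of order $|\alpha|^3 n^{-3\gamma-3/2}$. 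The main obstacle, and the technical core of the paper, is to extract a quantitative bound on $|\alpha| = |(A_0^{-1})_{k_0,j_0}|$ from the $\epsilon$-stability hypothesis: while $|\alpha|\le 1/d_{j_0}(A_0)$ is immediate, $d_{j_0}(A_0)$ is the distance from row $j_0$ to the span of \emph{all} other rows, not just those in $S\setminus\{j_0\}$, so the $\epsilon$-stability of $S$ alone does not directly supply a lower bound. Combining the $\epsilon$-stability of $S$ with the Tao--Vu singular-value control on the contribution of unstable rows, one obtains $|\alpha| \lesssim \mathrm{poly}(n)/\epsilon$; summing the per-swap error across all $n^2$ entries and dividing by $n$ yields a total log-determinant discrepancy tending to zero precisely under the hypothesis $n^{3/4-\gamma/2}\log n/\epsilon \to 0$.
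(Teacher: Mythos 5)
Your outer framework matches the paper's: invoke Theorem~\ref{TVuK2.1}, verify the Hilbert--Schmidt condition from~\eqref{e:Mcond}, write $\log|\det(\cdot+zI)|$ as a sum of $\log d_i$, reorder so that the stable rows come first, and dispose of the $o(n/\log n)$ unstable rows via the Tao--Vu least singular value bound (Theorem~\ref{lsv0}) so that each contributes only $O(\log n)$. The divergence, and the gap, is in the core comparison over the stable rows. You propose an entry-by-entry Lindeberg swap on $\log|\det|$ via the matrix determinant lemma, whose per-swap error is governed by $\alpha=(A_0^{-1})_{k_0,j_0}$, and you correctly identify that bounding $|\alpha|$ is the crux: $|\alpha|\le 1/d_{j_0}(A_0)$, where $d_{j_0}(A_0)$ is the distance from row $j_0$ to the span of \emph{all} other rows, including the unstable ones. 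You then assert that ``combining the $\epsilon$-stability of $S$ with the Tao--Vu singular-value control'' yields $|\alpha|\lesssim \mathrm{poly}(n)/\epsilon$, but no such combination is exhibited, and I do not think one exists in this generality. For the motivating example $T_n+zI$ with $|z|<1$, the first $n-1$ rows are $\Theta(1)$-stable yet the entries of $(T_n+zI)^{-1}$ are of order $|z|^{-n}$; after adding noise, the only available lower bound on $d_{j_0}(A_0)$ is $\sigma_n(A_0)\ge n^{-C_1}$ from Theorem~\ref{lsv0}, which involves no $\epsilon$ whatsoever, and whose constant $C_1$ is not universal---it grows with the failure-probability exponent $A$, which you need large to union-bound over $\Theta(n^2)$ swaps. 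Plugging $|\alpha|\le n^{C_1}$ and $|t|\approx n^{-\gamma-1/2}$ into the sum over entries divided by $n$ gives $n^{1/2-\gamma+C_1}$, which need not tend to zero and in any case does not reproduce the stated hypothesis $n^{3/4-\gamma/2}\log n/\epsilon\to 0$.

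The paper avoids this entirely by a structural choice that your entry-wise swap cannot reproduce: it perturbs one \emph{row} at a time rather than one entry, and it compares $d_i(A_n+zI)$ and $d_i(B_n+zI)$ to the common deterministic reference $d_i(M_n+zI)$ rather than to each other. After reordering, rows $1,\dots,m$ are exactly the stable ones, so $d_i(M_n+zI)\ge\epsilon$ by hypothesis for $i\le m$, and the row-by-row perturbation analysis (Lemmas~\ref{stablem1}, \ref{stablem2}, \ref{contstab}, Proposition~\ref{stabprop}) only ever involves spans of the first $i-1$ stable rows---never the unstable rows, never the full inverse of the matrix. That is why $\epsilon$ becomes the governing quantity in the paper's estimate and why your $|\alpha|$ does not. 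Two smaller issues: the claim that $\|n^{-\gamma}X_n\|_{\mathrm{op}}=o(\epsilon)$ by itself ``ensures $d_j^*(A)\ge\epsilon/2$'' is not a one-line operator-norm fact, since distances to spans are not Lipschitz in the matrix unless the conditioning of the spanning rows is already controlled, which is precisely what the continued-stability Lemma~\ref{contstab} tracks inductively (and, with only a finite second moment, $\|X_n\|_{\mathrm{op}}$ is merely $O(\sqrt n)$ a.s., not $O(1)$); and for complex entries your second-order Lindeberg cancellation requires $\E[x^2]=\E[y^2]$, which the theorem does not assume and which truncation cannot create---one would instead have to argue, as the paper's estimate does implicitly, that the second-order contribution is negligible in absolute value simply because the noise is polynomially small.
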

The function $n/\log^{1.1} n$ above 
can be replaced by any function that is $o(n/\log n)$ without changing the proof.  Also, 
while the constraint $\gamma>1.5$ is needed here, it is likely an artifact.

In Section~\ref{S:Tnapp} we will use Theorem~\ref{mainthm} to compute the exact limiting ESD for two families of fixed matrices $M_n$; both results are new. The two families are block-diagonal matrices $M_n$, where the diagonal blocks each equal $T_k$ for some value $k$.  In Theorem~\ref{smsmallblocks}, when the diagonal blocks are small ($k\ll \log n$), all the rows in the matrix $M_n$ are $\epsilon$-stable with $\epsilon$ large enough that the limiting ESD is equal to the limiting ESD of the original matrix $M_n$ (namely all zeros).  In Theorem~\ref{smallblocks}, when the diagonal blocks are large ($k\gg \log n$), Theorem~\ref{mainthm} shows that the limiting ESD $M_n$ plus any polynomial small random noise is equal to the limiting ESD of $T_n$ plus complex Gaussian polynomially small random noise (which is uniform on the unit circle $\{z: \abs z =1\}$ by \cite{GWZeitouni2011}, see also \cite{Sniady2002}).    These families of block-diagonal matrices were introduced in \cite{GWZeitouni2011}, where the case 
$k=c \log n$ for $c$ a positive constant was also studied.  In \cite{GWZeitouni2011}, it was shown that the limiting spectral radius when $k=c\log n$ of the block diagonal matrix $M_n$ plus random noise scaled by $n^{-\gamma}$ with $\gamma > 5/2$ is strictly less than 1, with probability approaching 1 as $n\to\infty$.  The same families of fixed matrices are also being studied in work-in-progress by Feldheim, Paquette, and Zeitouni \cite{FPZeitouni2014} using a very different approach than that used in the current paper.  Feldheim, Paquette, and Zeitouni \cite{FPZeitouni2014} expect to prove theorems similar to Theorem~\ref{smsmallblocks} and Theorem~\ref{smallblocks} with their methods, and they are optimistic that their methods will also lead to an exact computation of the currently unknown limiting distribution of the ESD in the case where $k=c \log n$ for constant $c$.  (Note that the methods of the current paper do not directly apply when $k=c\log n$, since then there too many rows (namely $n/(c\log n)$)
 that must be excluded from the $\epsilon$-stable set in order for $\epsilon$ to be large enough.)

In \cite{Sniady2002}, it is shown that there exists a scaling $t_n$ of iid complex Gaussian noise (with $t_n \to 0$) such that ESD of the matrix $M_n$ plus the $t_n$-scaled Gaussian noise converges almost surely to the Brown measure.  No bounds on $t_n$ are given, however.  In \cite{GWZeitouni2011} it is shown that polynomially small noise is a sufficient: the distribution of the ESD of a matrix $M_n$ plus polynomially small iid complex Gaussian noise converges in probability to the Brown measure of the matrix $M_n$, so long as the matrix $M_n$ and the Brown measure each satisfy a certain regularity property.  
Theorem~\ref{mainthm} shows that, if $M_n$ satisfies \eqref{e:Mcond} and the $\epsilon$-stability condition and random noise is polynomially small, then the requirement that the perturbation $\Phi_n$ be complex Gaussian may be removed: in fact, any $\Phi_n$ with iid mean zero, variance one entries will suffice.  Theorem~\ref{mainthm} has an additional benefit in that it applies to cases where the ESD does \emph{not} converge to the Brown measure; in fact Theorem~\ref{smsmallblocks} is an example of just such a situation.  The proof of Theorem~\ref{mainthm} uses an approach that does not use the free probability machinery that features in \cite{Sniady2002,GWZeitouni2011}.

Topics are organized as follows.  We can apply Theorem~\ref{mainthm} to our
motivating question, proving that the limiting ESD of $T_n$ plus polynomially
small random noise is always uniform on the unit circle; see
Section~\ref{S:Tnapp}.  In Section~\ref{S:Tnapp} we also discuss the
block-diagonal class of matrices generalizing $T_n$ (introduced in \cite{GWZeitouni2011}) and use Theorem~\ref{mainthm} to compute the limiting ESD in two cases where it was previously unknown. In Section~\ref{S:TV}, we will discuss the replacement principle approach to proving universality developed by Tao and Vu
\cite{TVuK2008} (see also \cite{PjmwSparseRandMat2011}).  In
Section~\ref{S:stab}, we will prove that small perturbations of
$\epsilon$-stable sets of vectors remain $(\epsilon/2)$-stable, and we will show how this relates to the tools in Section~\ref{S:TV}.  The proof of Theorem~\ref{mainthm} is in Section~\ref{ss:mainthmproof}.  

No effort is made to optimize constants, and we will often choose explicit
constants to make computations clearer.  All logarithms are natural unless
otherwise noted.
Also, we will use $\fnorm{A}=\tr(A^*A)^{1/2}$ to denote the Hilbert-Schmidt norm (also called the Frobenius norm).

\section{Application to a class of non-normal matrices} \label{S:Tnapp} \label{S:Tbn}

In this section, we will give sketches of how the main theorem
(Theorem~\ref{mainthm}) can be applied to a class of nilpotent matrices
generalizing $T_n$ that has interesting behaviors when
small random noise is added.  These ESDs of these matrices plus small random noise were studied in \cite{GWZeitouni2011} (see also \cite{Sniady2002}) and are being currently studied in \cite{FPZeitouni2014}.  

Let $b$ be a positive integer, and define $T_{b,n}$ to be an $n$ by $n$ block
diagonal matrix with each $b+1$ by $b+1$ block on the diagonal equal to
$T_{b+1}$ (as defined above in Equation~\eqref{eq-tN}).  If $b+1$ does not
divide $n$ evenly, an additional block equal to $T_{k}$ where $k\le b$  is inserted at
bottom of the diagonal (in particular, $k= n-\floor{\frac{n}{b+1}}(b+1)$, and if $k=0$, then no additional block is needed).
Thus, every entry of $T_{b,n}$ is zero except for entries on the superdiagonal  (the superdiagonal is the list of entries with coordinates $(i,i+1)$ for $1\le  i\le n-1$), and the superdiagonal of $T_{b,n}$ is equal to
$$(\underbrace{1,1,\dots,1}_b,0,\underbrace{1,1,\dots,1}_b,0,\dots,
\underbrace{1,1,\dots,1}_b,\underbrace{0,1,1,\dots,1}_{\leq b}).$$
(Note that $T_{b,n}$ was defined slightly differently in \cite{GWZeitouni2011}, in that the last (possibly non-existent) diagonal block contained all zeros.)
Recall that the spectral radius of a matrix is the maximum absolute value of    the eigenvalues.  
In \cite{GWZeitouni2011}, it was proven that the distribution of the ESD of $T_n$ plus polynomially small Gaussian noise converges in probability to uniform on the unit circle, and it was shown for $\gamma > 5/2$ that the spectral radius of $T_{\log n,n}$, plus random noise scaled by $n^{-\gamma}$ 
is 
strictly less than 1, with probability approaching 1 as $n\to\infty$. 

The matrix $T_{b,n}+zI$ has a large set of rows that are $\epsilon$-stable for constant $\epsilon$ (depending on $z$), namely the set of all rows of the form $(0,\dots,0,z,1,0,\dots,0)$, a set having size at least $n-\floor{n/(b+1)}$ (see Lemma~\ref{Tbn-epstab}).
However, the $\epsilon$-stability of the set of \emph{all} rows of $T_{b,n}+zI$
is much smaller, having size $\Theta(\abs{z}^{b+1})$ for small $z$, which is exponentially small when $b = \Theta(n)$ (see Lemma~\ref{Tbn-epstab-all}). 

We can apply our main theorem to prove the following two results about the limiting ESD of $T_{b,n}$ plus polynomially small random noise, for different sizes of $b$.

\begin{theorem}[Small blocks]
 \label{smsmallblocks}
Let $T_{b,n}$ be as defined above, with $b = o(\log n)$, and let $\Phi_n$ be a random matrix
with iid mean 0 variance $1/n$ entries. 

Then, the distribution of the ESD of $T_{b,n}+ n^{-\gamma} \Phi_n$, where $\gamma > 1.5$, converges in probability to the Dirac measure $\delta_0$, with mass 1 at the origin.
\end{theorem}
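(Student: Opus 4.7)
The plan is to apply Theorem~\ref{mainthm} to reduce from an arbitrary iid perturbation to a convenient ensemble (for which the limiting spectrum can be identified directly), and then to use a pseudospectrum argument to show that the limit is $\delta_0$.

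First, I would verify the hypotheses of Theorem~\ref{mainthm} for $M_n = T_{b,n}$. Condition~\eqref{e:Mcond} is immediate since $T_{b,n}$ has at most $n$ nonzero entries (each equal to $1$), giving $\tfrac{1}{n}\fnorm{T_{b,n}}^2 \le 1$. For the stability hypothesis I take $S$ to be the set of all $n$ rows of $T_{b,n}+zI$, so that $|S| = n \ge n - n/\log^{1.1}n$. By Lemma~\ref{Tbn-epstab-all}, for every nonzero $z$ (a full-measure set) this set is $\epsilon$-stable with $\epsilon \ge c(z)\,|z|^{b+1}$; the tight rows are the last row of each full block, which has the form $z\,e_i$, and its distance to the span of the other rows is controlled by the entries of the inverse of the shifted Jordan block $T_{b+1}+zI$. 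Since $b+1 = o(\log n)$ we have $|z|^{b+1} = \exp\!\bigl((b+1)\log|z|\bigr) = n^{-o(1)}$ for any fixed $z \ne 0$, so $\epsilon$ decays slower than any polynomial in $n$. Because $\gamma > 3/2$ makes the numerator $n^{3/4-\gamma/2}\log n$ polynomially small, the hypothesis $n^{3/4-\gamma/2}(\log n)/\epsilon \to 0$ is satisfied.

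Theorem~\ref{mainthm} then gives $\mu_{A_n} - \mu_{B_n} \to 0$ in probability, where $B_n = T_{b,n} + n^{-\gamma}\Psi_n$ and $\Psi_n$ is chosen to have iid complex Gaussian entries scaled by $1/\sqrt n$. It remains to show $\mu_{B_n} \to \delta_0$. I would use the standard pseudospectrum inclusion: if $\mu$ is any eigenvalue of $T_{b,n}+E$, then there is a unit vector $v$ with $(T_{b,n}-\mu I)v = -Ev$, so $\sigma_{\min}(T_{b,n}-\mu I) \le \norm{E}$ (the operator norm). For the Gaussian ensemble, Bai--Yin yields $\norm{n^{-\gamma}\Psi_n} = O(n^{-\gamma})$ with high probability. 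Since $T_{b,n} - \mu I$ is block diagonal with Jordan-type blocks of size $k \le b+1$, its smallest singular value is the minimum over blocks, and the explicit expansion $(T_k - \mu I)^{-1} = \sum_{j=0}^{k-1}(-\mu)^{-j-1}T_k^j$ gives $\sigma_{\min}(T_k - \mu I) \ge c\,|\mu|^k$ for $|\mu|$ bounded. Combining these forces every eigenvalue of $B_n$ to satisfy $|\mu| \le C\, n^{-\gamma/(b+1)}$, which tends to $0$ because $b+1 = o(\log n)$ implies $\log n / (b+1) \to \infty$. Hence all eigenvalues concentrate at the origin with high probability, and $\mu_{B_n} \to \delta_0$ in probability.

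The main obstacle is the stability lower bound $\epsilon \ge c(z)\,|z|^{b+1}$: verifying it requires a careful computation for shifted Jordan blocks, showing that the worst-case row-distance is attained at the bottom of each block. Once this is in hand, the assumption $b = o(\log n)$ is precisely what is needed to make $|z|^{b+1}$ sub-polynomial in $n$ and thereby satisfy the quantitative hypothesis of Theorem~\ref{mainthm}; any larger block size would spoil the argument (and in fact produce the very different limit seen in Figure~\ref{Tnfig}). A minor bookkeeping point is the possibly smaller trailing block of size $k \le b$ appearing when $(b+1)\nmid n$: since $|z|^k \ge |z|^{b+1}$ for $|z| \le 1$, its row-distance and smallest-singular-value bounds are only stronger than those for a full block, so it affects neither step.
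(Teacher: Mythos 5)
Your proposal is correct, but it diverges from the paper's argument at a key point, and the two routes illustrate somewhat different phenomena.

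The paper does \emph{not} apply Theorem~\ref{mainthm} here. Instead it applies the underlying technical workhorse, Proposition~\ref{corpolymatA} (equivalently Proposition~\ref{corpolymat}), with the second perturbation $\Psi$ taken to be zero. Because \emph{all} $n$ rows of $T_{b,n}+zI$ are $\epsilon$-stable with $\epsilon = n^{-o(1)}$ when $b=o(\log n)$ (Lemma~\ref{Tbn-epstab-all}), one can bound
\[
\abs{\tfrac1n\textstyle\sum_{i=1}^{n}\log d_i(T_{b,n}+n^{-\gamma}\Phi)-\log d_i(T_{b,n})}\le\delta_{n,\epsilon}\to 0,
\]
and conclude directly via Theorem~\ref{TVuK2.1} that $\mu_{T_{b,n}+n^{-\gamma}\Phi}$ converges to $\mu_{T_{b,n}}=\delta_0$. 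This is a stronger statement than universality: it says the ESD is \emph{unchanged} by the perturbation, not merely independent of its law, and the mechanism is precisely that stability of the full row set is large compared to the noise. Note one cannot literally invoke Theorem~\ref{mainthm} with $\Psi=0$ (it requires both ensembles to have variance-one entries), so this step genuinely leans on the more flexible proposition.

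You instead invoke Theorem~\ref{mainthm} to pass to Gaussian noise and then close the argument with a pseudospectrum bound: every eigenvalue $\mu$ of $T_{b,n}+E$ satisfies $\sigma_{\min}(T_{b,n}-\mu I)\le\norm{E}$, the block structure together with the Neumann series for $(T_k-\mu I)^{-1}$ gives $\sigma_{\min}(T_{b,n}-\mu I)\ge |\mu|^{b+1}/(b+1)$ for $|\mu|\le 1$ (and $\ge 1/(b+1)$ for $|\mu|\ge 1$), and $\norm{E}=O(n^{-\gamma})$ forces $|\mu|\le \bigl((b+1)\norm{E}\bigr)^{1/(b+1)}\to 0$ since $\log n/(b+1)\to\infty$. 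This is correct, and it is an elegant and elementary way to pin the Gaussian ESD to $\delta_0$. Two remarks, though. First, the Gaussian detour is unnecessary: the same pseudospectrum inequality works for the original $\Phi_n$ using the crude bound $\norm{n^{-\gamma}\Phi_n}\le\fnorm{n^{-\gamma}\Phi_n}=O(n^{1/2-\gamma})$ a.s.\ (by the law of large numbers plus Borel--Cantelli, exactly as in Lemma~\ref{Aasb}); this gives a fully self-contained proof that does not touch the stability machinery at all and in fact only needs $\gamma>1/2$. Second, your claimed bound $\epsilon\ge c(z)|z|^{b+1}$ holds for $|z|<1$; for $|z|>1$ Lemma~\ref{Tbn-epstab-all} gives a $z$-dependent constant, which is even better, so the hypothesis of Theorem~\ref{mainthm} is still met---just be careful how you phrase the lower bound over the full-measure set of $z$.

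In short: your argument is valid and actually proves more than is stated (it extends to $\gamma>1/2$), but it sidesteps the point the paper is making, namely that the stability framework itself, applied with one perturbation set to zero, already identifies the limit as $\delta_0$.
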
 

\begin{proof}[Sketch]
Blocks of size $b=o(\log n)$ are small enough that the $\epsilon$-stability of
\emph{all} the rows of the matrix is reasonably high, namely $\epsilon >\Omega(
n^{o(1)})$ (see Lemma~\ref{Tbn-epstab-all}).  We can apply the proof
approach for the main result (Theorem~\ref{mainthm}) to show that the distribution of the ESD of $T_{b,n}+n^{-\gamma} \Phi_n$ converges in probability to the ESD of $T_{b,n}$, which has all eigenvalues equal to zero. The full details appear in Section~\ref{S:AppPfs}. 
\end{proof}

In the case where $b_n \to \infty$ (e.g., $b_n=\log\log n$), Sniady's result \cite{Sniady2002} shows that the distribution of the ESD of the perturbation of $T_{b,n}$ converges almost surely to uniform on the unit circle $\{z: \abs z=1\}$ (matching the Brown measure), if one perturbs with random iid complex Gaussian noise scaled by some particular $t_n$, where $t_n\to 0$.  Theorem~\ref{smsmallblocks} shows that, for polynomially small random noise, the distribution of the ESD of the perturbed matrix does \emph{not} converge to the Brown measure, but rather converges in probability to the limiting ESD of $T_{b,n}$ without perturbation (namely, the Dirac measure $\delta_0$).  Two interesting questions one might ask are what is the scaling $t_n$ so that the ESD converges to uniform on the unit circle, and whether universality holds for that scaling $t_n$. 

\begin{remark}
One could conceive of a of Theorem~\ref{mainthm} where the random noise was scaled by an arbitrary function $f(n)$ with $f(n)\to 0$ as $n\to\infty$, rather than by $n^{-\gamma}$.  The singular value bound (from \cite{TVu2008}, which is restated in Theorem~\ref{lsv0}) hold in a useful form if $f(n) \ge n^{-\gamma}$ (e.g. $f(n) = 1/\log n$), but it would not be useful for exponentially small $f(n)$.  Conversely, the $\epsilon$-stability condition would likely be fine for $f(n)\le n^{-\gamma}$ (including $f(n)$ exponentially small) but is likely to be problematic when $f(n) \gg  n^{-\gamma}$ (e.g. $f(n)=1/\log n$), requiring in the latter case that $\epsilon$ is much larger than practical.  Nonetheless, in principle, we expect---supported by some computer experimentation---that universality of the form in Theorem 2 is very robust and should hold without conditions on the size of the random noise.
\end{remark}

\begin{theorem}[Large blocks]
 \label{smallblocks}
Let $T_{b,n}$ be as defined above with $b\gg \log n$ (i.e., $b/\log n \to
\infty$ as $n \to \infty$), and let $\Phi_n$ be a complex iid random matrix where each entry has mean 0 and variance $1/n$.

Then, the distribution of the ESD of $T_{b,n}+ n^{-\gamma} \Phi_n$, where $\gamma > 1.5$, converges in probability to the uniform measure on the unit circle $\{z: \abs z = 1\}$.  
In particular, by setting $b=n-1$, we have that the distribution of the ESD of $T_n+n^{-\gamma}\Phi_n$, where $\gamma > 1.5$, converges in probability to the uniform measure on the unit circle.
\end{theorem}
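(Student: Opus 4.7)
The plan is to apply Theorem~\ref{mainthm} to reduce the universality question to the case of complex Gaussian noise, then to compare $T_{b,n}$ with $T_n$ via the Tao--Vu replacement principle and a low-rank perturbation analysis of the log determinant, and finally to invoke the known result of \cite{GWZeitouni2011} for $T_n$ plus complex Gaussian noise.  The special case $b=n-1$ is immediate because $T_{n-1,n}=T_n$; there only the first step is needed.

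First I would verify the hypotheses of Theorem~\ref{mainthm} for $M_n=T_{b,n}$.  The Frobenius bound $\fnorm{T_{b,n}}^2\le n$ gives~\eqref{e:Mcond}.  For almost every $z\in\bb C$, Lemma~\ref{Tbn-epstab} identifies the rows of $T_{b,n}+zI$ of the form $(0,\dots,0,z,1,0,\dots,0)$ as an $\epsilon(z)$-stable set with $\epsilon(z)>0$ independent of $n$, of size at least $n-\lfloor n/(b+1)\rfloor$.  Because $b\gg\log n$, $\lfloor n/(b+1)\rfloor=o(n/\log n)$, so the size requirement is met (using the remark that $n/\log^{1.1}n$ may be replaced by any $o(n/\log n)$), and the stability requirement $n^{3/4-\gamma/2}\log n/\epsilon(z)\to 0$ follows from $\gamma>3/2$ and $\epsilon(z)$ being a positive constant.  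Theorem~\ref{mainthm} then yields $\mu_{T_{b,n}+n^{-\gamma}\Phi_n}-\mu_{T_{b,n}+n^{-\gamma}\Psi_n}\to 0$ in probability, where $\Psi_n$ has iid complex Gaussian entries of variance $1/n$.

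Next, with the same $\Psi_n$ used in both matrices, I would compare $T_{b,n}+n^{-\gamma}\Psi_n$ with $T_n+n^{-\gamma}\Psi_n$.  Their difference $E=T_{b,n}-T_n$ is deterministic, supported on $r=\lfloor n/(b+1)\rfloor$ superdiagonal entries, hence of rank $\le r$ and operator norm $\le 1$.  To invoke the Tao--Vu replacement principle (Theorem~\ref{TVuK2.1}) and conclude $\mu_{T_{b,n}+n^{-\gamma}\Psi_n}-\mu_{T_n+n^{-\gamma}\Psi_n}\to 0$, it suffices (beyond the trivial Frobenius bound $O(n)/n^2\to 0$) to show
$$\frac{1}{n}\bigl|\log|\det(T_{b,n}+zI+n^{-\gamma}\Psi_n)|-\log|\det(T_n+zI+n^{-\gamma}\Psi_n)|\bigr|\to 0$$
in probability for a.e.\ $z\in\bb C$.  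Setting $A=T_n+zI+n^{-\gamma}\Psi_n$ and $B=A+E$, the singular value interlacing inequalities $\sigma_{i+r}(A)\le\sigma_i(B)\le\sigma_{i-r}(A)$ give
$$\bigl|\log|\det A|-\log|\det B|\bigr|\le r\bigl[\log\sigma_1(A)+\log\sigma_1(B)+|\log\sigma_n(A)|+|\log\sigma_n(B)|\bigr].$$
The operator norms are $O(1)$ for fixed $z$, and the smallest singular value bound of Theorem~\ref{lsv0} (from \cite{TVu2008}) yields $\sigma_n(A),\sigma_n(B)\ge n^{-C}$ with high probability.  The resulting bound is $O(r\log n)=O((n\log n)/(b+1))=o(n)$ precisely because $b\gg\log n$, and dividing by $n$ gives the convergence in probability.

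Combining these two steps with the result of \cite{GWZeitouni2011} that $\mu_{T_n+n^{-\gamma}\Psi_n}$ converges in probability to the uniform measure on the unit circle then gives the theorem.  The main technical obstacle is the second step: rank-$o(n)$ perturbations of $T_n+n^{-\gamma}\Psi_n$ \emph{can} alter its limiting ESD (Corollary~8 of \cite{GWZeitouni2011}), so one cannot quote a generic low-rank perturbation principle; the log determinant comparison must instead be done by hand using singular value interlacing together with the Tao--Vu smallest singular value bound, and the quantitative regime $b\gg\log n$ is exactly what renders the interlacing error $O(r\log n)$ asymptotically negligible.
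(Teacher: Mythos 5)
Your proposal is correct, and it establishes the theorem, but it differs from the paper's proof in two genuine ways, both in the middle step comparing $T_{b,n}$ with $T_n$.

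First, the order of the two reductions is reversed. You apply Theorem~\ref{mainthm} directly to $M_n=T_{b,n}$ to swap the noise type to Gaussian, and only then pass from $T_{b,n}$ to $T_n$; the paper first passes from $T_{b,n}+n^{-\gamma}\Phi_n$ to $T_n+n^{-\gamma}\Phi_n$ with the \emph{same} noise $\Phi_n$, and then applies Theorem~\ref{mainthm} to $M_n=T_n$ to swap the noise for Gaussian. Both orderings verify the $\epsilon$-stability hypothesis via Lemma~\ref{Tbn-epstab}: in your ordering one excludes the $\lfloor n/(b+1)\rfloor = o(n/\log n)$ all-zero rows of $T_{b,n}+zI$; in the paper's one excludes only the last row of $T_n+zI$.

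Second, and more substantively, the mechanism by which you compare the two deterministic parts under common noise is different. You write $E=T_{b,n}-T_n$, observe $\mathrm{rank}(E)\le r=\lfloor n/(b+1)\rfloor$, and control $\frac1n\log|\det|$ via singular value interlacing $\sigma_{i+r}(A)\le\sigma_i(A+E)\le\sigma_{i-r}(A)$ together with the smallest singular value bound of Theorem~\ref{lsv0} and a crude $\sigma_1=O(n^{O(1)})$ bound, yielding an error $O(r\log n)=o(n)$. The paper instead exploits that the two matrices have \emph{literally identical} rows outside a set of size $o(n/\log n)$: after re-ordering so the discrepant rows come last, the terms $\log d_i$ in the determinant expansion agree exactly for the leading rows, and the remaining $o(n/\log n)$ terms are each $O(\log n)$ by Lemma~\ref{lem:Hdim} (which itself rests on Theorem~\ref{lsv0}). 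Your interlacing route uses only the rank of the perturbation and would apply to low-rank perturbations that are not simple row replacements, so it is slightly more flexible; the paper's route is a more direct byproduct of its determinant-as-product-of-row-distances machinery and avoids invoking interlacing. Your final remark---that a generic rank-$o(n)$ perturbation principle does \emph{not} apply here and an explicit quantitative argument is required---is exactly the right warning, and it is the same reason the paper also does this step by hand.

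Both proofs then close by citing the Gaussian result from \cite{GWZeitouni2011}. Your verification of the Theorem~\ref{mainthm} hypotheses (Frobenius bound, stability with constant $\epsilon$, size requirement $o(n/\log n)$, and $\gamma>3/2$) is accurate.
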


\begin{proof}[Sketch]
In this case, there are only $o(n/\log n)$ rows of $T_{b,n}$ that differ from the corresponding rows of $T_n$.  We can ignore these rows using the replacement principle approach from \cite{TVuK2008} (see Section~\ref{S:TV}), thus showing that the distribution of the ESD of $T_{b,n}+n^{-\gamma} \Phi_n$ converges in probability to the distribution of the ESD of $T_n + n^{-\gamma}\Phi_n$.  

Next, we can apply Theorem~\ref{mainthm}, noting that if the last row is excluded, the remaining rows are $\epsilon$-stable for constant $\epsilon$ (Lemma~\ref{Tbn-epstab}), to show that the ESD of $T_{n}+n^{-\gamma} \Phi_n$ converges in probability to the ESD of $T_n + n^{-\gamma}\Psi_n$, where $\Psi_n$ has iid complex Gaussian entries with mean 0 and variance $1/ n$.  This ESD in turn converges in probability to uniform on the unit circle by \cite{GWZeitouni2011}.
\end{proof}

\section{The replacement principle approach to proving universality}\label{S:TV}

In \cite{TVuK2008}, Tao and Vu (with an appendix by Krishnapur) prove a general result giving sufficient conditions for the ESDs of two matrices to become close to each other.

\begin{theorem}\emph{\cite{TVuK2008}} \label{TVuK2.1} Suppose for each $n$
that $A_n, B_n \in \bb M_n(\bb C)$ are ensembles of random matrices.  Assume that
\begin{itemize}
\item[\emph{(i)}]
The expression
\begin{equation*}
\frac1{n} \fnorm{A_n}^2 + \frac1{n}\fnorm{B_n}^2
\end{equation*}
is bounded in probability (resp. almost surely).

\item[\emph{(ii)}]
For almost all complex numbers $z$,
$$\frac1n \log \abs{\det(A_n + zI)} - 
\frac1n \log \abs{\det(B_n + zI)} $$
converges in probability (resp. almost surely) to zero.  In particular, for each fixed $z$, these
determinants are non-zero with probability $1-o(1)$ for all $n$ (resp. almost surely non-zero for all but finitely many $n$).
\end{itemize}
Then, $\mu_{A_n} - \mu_{B_n} $ converges
in probability (resp. almost surely) to zero.
\end{theorem}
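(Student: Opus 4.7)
The plan is to apply Girko's Hermitization strategy: recover the empirical spectral measure from its logarithmic potential, thereby reducing the problem to hypothesis (ii) together with a tightness argument based on hypothesis (i). By the definition of convergence in probability (resp.\ almost surely) of measures against smooth compactly supported test functions, it suffices to show, for every $g \in C_c^\infty(\bb C)$, that $\int g\, d\mu_{A_n} - \int g\, d\mu_{B_n} \to 0$ in probability (resp.\ a.s.).

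The first step is a potential-theoretic identity. For any $n$ by $n$ complex matrix $M$ with eigenvalues $\lam_1,\ldots,\lam_n$, the logarithmic potential
$$U_{\mu_M}(w) := \int \log|\zeta - w|\, d\mu_M(\zeta) = \frac{1}{n}\log|\det(M - wI)|$$
satisfies $\Del_w U_{\mu_M} = 2\pi \mu_M$ in the sense of distributions, because $\Del \log|w| = 2\pi \del_0$. Integration by parts -- valid since $g$ has compact support and $U_{\mu_M}$ is locally integrable -- together with the substitution $w \mapsto -z$ yields
$$\int g\, d\mu_M = \frac{1}{2\pi n}\int_{\bb C} \Del g(-z)\cdot \log|\det(M + zI)|\, dA(z).$$
Taking the difference between $M = A_n$ and $M = B_n$ expresses $\int g\, d(\mu_{A_n} - \mu_{B_n})$ as the integral, against the fixed smooth compactly supported function $\varphi(z) := \frac{1}{2\pi}\Del g(-z)$, of precisely the quantity in hypothesis (ii).

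The main obstacle is to interchange the limit and the integral: hypothesis (ii) gives only pointwise convergence in probability for a.e.\ $z$, whereas we need convergence of $\int \varphi(z) X_n(z)\, dA(z)$ where $X_n(z) := \frac{1}{n}\bigl[\log|\det(A_n+zI)| - \log|\det(B_n+zI)|\bigr]$. I would establish uniform integrability in $z$ on $K := \supp \varphi$ using hypothesis (i). The upper bound comes from AM--GM applied to $|\det(A_n+zI)| = \prod_j|\lam_j+z|$, combined with Schur's inequality $\sum_j|\lam_j|^2 \le \fnorm{A_n}^2$:
$$\frac{1}{n}\log|\det(A_n+zI)| \le \frac{1}{2}\log\!\left(\frac{2}{n}\fnorm{A_n}^2 + 2|z|^2\right),$$
bounded in probability by (i), uniformly for $z \in K$. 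The lower bound is more delicate since $\log|\lam_j+z|$ diverges to $-\infty$ when $z = -\lam_j$; however, local integrability of $\log|\cdot|$ on $\bb C$ gives $\int_K \abs{\log|\lam - z|}\, dA(z) \le C_K(1+\log(1+|\lam|))$ for every $\lam \in \bb C$. Summing over eigenvalues, dividing by $n$, and applying Jensen's inequality (concavity of $\log$) together with (i) bounds $\int_K \abs{\varphi(z)}\cdot \abs{\frac{1}{n}\log|\det(A_n+zI)|}\, dA(z)$ in probability, and similarly for $B_n$.

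With this $L^1(K)$-tightness in $z$ and pointwise convergence in probability from (ii) in hand, I would finish by a standard subsequence argument. Along any subsequence, on an event of probability arbitrarily close to $1$ the bounds from (i) may be upgraded to deterministic constants; extracting a further subsequence gives almost-sure pointwise convergence in (ii) for a.e.\ $z$. On that event the ordinary dominated convergence theorem, applied in $z$ with the $L^1(K)$ bound just established as dominating function, yields $\int \varphi\, X_n\, dA \to 0$ almost surely along the sub-subsequence, proving convergence to zero in probability. The almost-sure case follows analogously, working along a single sample path.
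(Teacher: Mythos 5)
The paper does not prove this statement at all---it is imported verbatim from \cite{TVuK2008}---so the comparison is with the original Tao--Vu--Krishnapur argument, and your strategy is essentially that argument: recover $\mu_{A_n}-\mu_{B_n}$ from the logarithmic potentials via Green's identity, so that $\int g\,d(\mu_{A_n}-\mu_{B_n})$ becomes the integral of $\frac{1}{2\pi}\Delta g(-z)$ against the quantity in (ii), and then justify the interchange of limit and integral using (i) together with local integrability of the logarithm. The hermitization identity, the upper bound via AM--GM and Schur's inequality, the estimate $\int_K\abs{\log\abs{\lambda-z}}\,dA(z)\le C_K(1+\log(1+\abs{\lambda}))$, and the Fubini/subsequence device for turning pointwise-in-$z$ convergence in probability into a.s.\ convergence for a.e.\ $z$ along a subsequence are all correct and are the right ingredients.

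The step that fails as written is the last one: you propose to apply ``the ordinary dominated convergence theorem, with the $L^1(K)$ bound just established as dominating function.'' An $L^1$ bound is not a dominating function, and $L^1(K)$-boundedness together with a.e.\ convergence does not imply convergence of the integrals (mass can concentrate); moreover no $n$-independent integrable dominating function for $X_n(z)$ need exist, since the logarithmic singularities sit at $z=-\lambda_j^{(n)}$ and move with $n$. What your estimates actually yield---and what you should invoke---is uniform integrability: on the good event from (i) the positive part of $\frac1n\log\abs{\det(A_n+zI)}$ is uniformly bounded on $K$, while the negative part is $\frac1n\sum_j\log^-\abs{\lambda_j+z}$, whose equi-integrability in $z$ follows from the uniform-in-$\lambda$ estimate $\int_K\log^-\abs{\lambda-z}\,\ind_{\{\log^-\abs{\lambda-z}>M\}}\,dA(z)\to0$ as $M\to\infty$; then Vitali's convergence theorem (equivalently, truncating $X_n$ at level $M$ and bounding the truncation error deterministically on the good event) closes the argument. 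A second, smaller repair: in the in-probability case the event supplied by (i) is $E_n=\{\frac1n\fnorm{A_n}^2+\frac1n\fnorm{B_n}^2\le C\}$, which depends on $n$, so one cannot ``upgrade (i) to deterministic constants on a single event'' along the subsequence; instead estimate $\mathbb{P}(\abs{Y_n}>\delta)\le\mathbb{P}(E_n^c)+\mathbb{P}(\abs{Y_n}>\delta,\,E_n)$ and perform the truncation step on $E_n$, using that for a.e.\ $z$ the truncated $X_n(z)$ still tends to $0$ in probability and is bounded by $M$. With these repairs your proof is complete and coincides with the one in \cite{TVuK2008}.
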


Note that Theorem~\ref{TVuK2.1} makes no assumption about the type of randomness in $A_n$ and $B_n$, or even whether the entries are independent.  We will eventually require independence of the entries in order to use bounds on the smallest singular value.  

\begin{lemma}\label{Aasb}
  For $\gamma > 0$ and $\Phi_n$ and $M_n$ as in Theorem~\ref{mainthm}, the matrix $A_n=M_n+ n^{-\gamma}\Phi_n$ satisfies $\frac1n\fnorm{A_n}^2$ 
is almost surely bounded, and the same statement holds with $A_n$ replaced by $B_n=M_n + n^{-\gamma}\Psi_n$.
\end{lemma}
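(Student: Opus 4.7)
The plan is to split $\frac{1}{n}\fnorm{A_n}^2$ into deterministic and random parts using the triangle inequality, then control each piece separately. From $A_n = M_n + n^{-\gamma}\Phi_n$ and the elementary inequality $(a+b)^2 \le 2a^2 + 2b^2$ applied once to the whole Frobenius norm,
$$\frac{1}{n}\fnorm{A_n}^2 \;\le\; \frac{2}{n}\fnorm{M_n}^2 \;+\; 2n^{-2\gamma}\cdot\frac{1}{n}\fnorm{\Phi_n}^2.$$
The first summand is bounded uniformly in $n$ by the standing assumption \eqref{e:Mcond} of Theorem~\ref{mainthm}. Since $n^{-2\gamma}\le 1$ for $\gamma>0$ and $n\ge 1$, everything reduces to showing that $\frac{1}{n}\fnorm{\Phi_n}^2$ is almost surely bounded in $n$.

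Writing $\Phi_n = (x_{ij}/\sqrt n)_{1\le i,j\le n}$ with the $x_{ij}$ iid copies of $x$, we have the exact identity
$$\frac{1}{n}\fnorm{\Phi_n}^2 \;=\; \frac{1}{n^2}\sum_{i,j=1}^n |x_{ij}|^2,$$
which is an average of $n^2$ nonnegative iid random variables of mean $\E|x|^2 = 1$. To upgrade this to an almost-sure statement uniform in $n$, the cleanest approach is to couple the entire sequence $\{\Phi_n\}_{n\ge 1}$ on a single probability space: draw an infinite array of iid copies $\{x_{ij}\}_{(i,j)\in\mathbb{N}\times\mathbb{N}}$ and take $\Phi_n$ to be the top-left $n\times n$ subarray, rescaled by $1/\sqrt n$. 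Enumerating the $x_{ij}$'s in L-shaped layers, so that the first $n^2$ of them are exactly the entries with $i,j\le n$, Kolmogorov's strong law of large numbers (applicable since $\E|x|^2=1<\infty$) gives $\frac{1}{N}\sum_{k=1}^N |x_k|^2 \to 1$ almost surely, and restricting to the subsequence $N=n^2$ yields $\frac{1}{n^2}\sum_{i,j\le n}|x_{ij}|^2\to 1$ almost surely; in particular this sequence is a.s.\ bounded in $n$.

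Combining the two bounds gives $\sup_n \frac{1}{n}\fnorm{A_n}^2 <\infty$ almost surely, and the identical argument with $y$ and $\Psi_n$ in place of $x$ and $\Phi_n$ handles $B_n$. The main (and only) obstacle is the minor bookkeeping needed to couple the matrices $\Phi_n$ across different values of $n$ so that the almost-sure statement makes sense; no moment assumption beyond variance one is required, because $|x|^2$ has finite first moment by hypothesis, which is precisely what the strong law needs.
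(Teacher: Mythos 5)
Your proof is correct and follows essentially the same route as the paper's one-line proof (triangle inequality to split off the $M_n$ contribution, the standing bound~\eqref{e:Mcond}, and the strong law of large numbers applied to $\frac1{n^2}\sum_{i,j}|x_{ij}|^2$ via finite second moments). The only difference is that you spell out the coupling of the $\Phi_n$ across $n$ through a single infinite iid array, a bookkeeping step the paper leaves implicit.
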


\begin{proof}
 Paraphrasing \cite[Lemma~1.9]{TVuK2008}, the result follows by combining \eqref{e:Mcond} with the triangle inequality, and using the law of large numbers along with the fact the second moments of the entries in $\sqrt n\Phi_n$ are finite.
\end{proof}

As noted in \cite{TVuK2008}, one fact that makes Theorem~\ref{TVuK2.1} particularly useful is that there are a number of different ways to express $\abs{\det(A)}$.  For example, for an $n$ by $n$ matrix $A$
\begin{equation}\label{detforms}
\abs{\det(A)} = \prod_{i=1}^n \abs{\lambda_i} = \prod_{i=1}^n \sigma(A) = \prod_{i=1}^n d_i(A)
\end{equation}
where $\lambda_1,\dots,\lambda_n$ are the eigenvalues of $A$ (with multiplicity), where $\sigma_1(A)\ge \sigma_2(A) \ge\dots\ge\sigma_n(A)\ge 0$ are the singular values of $A$, and where $d_i(A)$ is the distance from the $i$-th row of $A$ to the span of the first $i-1$ rows. Combining Equation~\eqref{detforms} with a result such as Lemma~\ref{Aasb} reduces proving universality of the ESD to a question about the distance from a perturbed vector to a span of perturbed vectors. 
In particular, one can prove Theorem~\ref{mainthm} using Lemma~\ref{Aasb}, Theorem~\ref{TVuK2.1}, and the following proposition:

\begin{proposition}\label{propdist}
Let $X_i$ be the rows of $A_n+zI$ and let $Y_i$ be the rows of $B_n+zI$, where $A_n$ and $B_n$ are as in Theorem~\ref{mainthm}. 
For almost all complex numbers $z$,
\begin{equation}\label{e:Goal}
\frac1n \sum_{i=1}^n \log \dist(X_i,\Span\{X_1,\dots,X_{i-1}\}) - 
 \log \dist(Y_i,\Span\{Y_1,\dots,Y_{i-1}\}) 
\end{equation}
converges in probability
to zero.
\end{proposition}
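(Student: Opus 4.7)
The plan is to rewrite (\ref{e:Goal}) using the identity $|\det(A_n+zI)| = \prod_i d_i(X)$ (and similarly for $B_n+zI$), so that the sum in question equals $\frac{1}{n}\bigl(\log|\det(A_n+zI)| - \log|\det(B_n+zI)|\bigr)$, and then to split the rows of $M_n+zI$ into the $\epsilon$-stable set $S$ promised by the hypothesis and its complement $S^c$. Since $|S^c|\le n/\log^{1.1} n = o(n/\log n)$, the ``good'' rows $i\in S$ account for all but a negligible fraction, while the ``bad'' rows $i\in S^c$ are few in number but carry no stability a priori. These two regimes are handled by completely different arguments.

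For the bad rows, I will combine the Tao--Vu smallest-singular-value bound (Theorem~\ref{lsv0}) with the hypothesis~(\ref{e:Mcond}) and standard concentration for the random matrices $\Phi_n, \Psi_n$ to conclude that $\sigma_n(A_n+zI),\sigma_n(B_n+zI)\ge n^{-B}$ for some constant $B$ with probability $1-o(1)$. Together with an a priori upper bound $d_i\le \|X_i\|=O(1)$ on each row norm, this yields $|\log d_i(X)|,|\log d_i(Y)|=O(\log n)$ uniformly in $i$ with high probability. Consequently the contribution of $S^c$ to (\ref{e:Goal}) is at most $O(\log n)\cdot|S^c|/n = O(1/\log^{0.1} n) = o(1)$, irrespective of the distribution of the noise.

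For the good rows, the plan is to use $\epsilon$-stability together with the perturbation result from Section~\ref{S:stab} to ensure that the $S$-rows of $A_n+zI$ and of $B_n+zI$ remain $(\epsilon/2)$-stable with high probability. The input is that the operator norms of $n^{-\gamma}\Phi_n$ and $n^{-\gamma}\Psi_n$ are much smaller than $\epsilon$, as forced by the hypothesis $n^{3/4-\gamma/2}\log n/\epsilon\to 0$. Stability then provides the deterministic lower bound $d_i(X), d_i(Y)\ge \epsilon/2$ for every $i\in S$, which places $\log d_i$ on a regime where it is Lipschitz with constant $O(1/\epsilon)$. This is the crucial ingredient that allows one to meaningfully compare $\log d_i(X)$ and $\log d_i(Y)$.

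With the good-row distances bounded away from zero, I will close the argument using a Lindeberg-type swap, replacing the entries of $\Phi_n$ by those of $\Psi_n$ one at a time and exploiting the matched mean-$0$ and variance-$1$ moments to cancel the leading terms in a Taylor expansion of each $\log d_i$. The main obstacle will be making this step quantitatively sharp enough: one must show that the accumulated change over all $n^2$ swaps is at most of order $n^{3/4-\gamma/2}\log n/\epsilon$, which is $o(1)$ by hypothesis. This requires controlling the discrete derivatives of $\log d_i$ with respect to individual entries uniformly in terms of $\epsilon^{-1}$, together with a truncation to discard low-probability events on which $\Phi_n$ or $\Psi_n$ are atypically large so that the residual third-moment-type errors can be absorbed. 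Executing this bookkeeping, while using only the minimal moment assumptions on $\Phi_n$ and $\Psi_n$, is where the bulk of the work lies.
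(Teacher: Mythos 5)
Your decomposition into $\epsilon$-stable ``good'' rows and a small ``bad'' complement matches the paper, and your treatment of the bad rows (singular-value lower bound from Theorem~\ref{lsv0}, combined with the crude $O(\log n)$ bound on each $\lvert\log d_i\rvert$ and $\lvert S^c\rvert = o(n/\log n)$) is exactly Lemma~\ref{lem:Hdim}. The setup for the good rows is also right: stability gives $d_i \ge \epsilon/2$ for both $A_n+zI$ and $B_n+zI$.

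The gap is in the final step. You propose an \emph{entry-wise Lindeberg swap} between $\Phi_n$ and $\Psi_n$ with matched first and second moments and a third-order Taylor remainder, but this machinery is both harder than necessary and not shown to close. The paper's observation (Propositions~\ref{corpolymatA}/\ref{corpolymat}) is that no moment-matching cancellation is needed at all: since $A_n$ and $B_n$ perturb the \emph{same} deterministic matrix $M_n+zI$, one writes
$d_i(M_n+zI+n^{-\gamma}\Phi) = d_i(M_n+zI)+\mathfrak{a}_i$ and $d_i(M_n+zI+n^{-\gamma}\Psi)=d_i(M_n+zI)+\mathfrak{b}_i$,
and shows via the row-by-row stability perturbation estimates of Section~\ref{S:stab} (Lemma~\ref{stablem2}, Lemma~\ref{contstab}, Proposition~\ref{stabprop}, Corollary~\ref{corpolynoise}) that \emph{both} error terms satisfy $\lvert\mathfrak{a}_i\rvert,\lvert\mathfrak{b}_i\rvert \le O\bigl(\|Z_i\|^{1/2} n^{-\gamma/2+3/4}\log^{1/4}n\bigr)$, while $d_i(M_n+zI)\ge\epsilon$. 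The ratio $(\mathfrak{a}_i-\mathfrak{b}_i)/(d_i(M_n+zI)+\mathfrak{b}_i)$ is then $O(\delta_{n,\epsilon})$ directly, and summing and dividing by $n$ finishes. The randomness of $x,y$ enters only through concentration of $\|\phi_i\|$ and the singular-value bound, never through a matched-moment expansion. (The ``Lindeberg'' telescoping the paper cites is row-by-row perturbation from the deterministic baseline, not a moment-matching swap between the two ensembles.)

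Your proposed entry-wise swap would need: (a) uniform control on the third discrete derivatives of $\sum_i \log d_i$ in individual entries, which involve sensitivities of projection operators and are not simply bounded by $\epsilon^{-1}$; and (b) a truncation to cope with the absence of any moment assumption beyond the second, which introduces a moment mismatch that must itself be estimated. Neither is carried out, and both are avoidable: the stability hypothesis already makes each noise term individually negligible, so there is nothing that needs to cancel.
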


Proving Proposition~\ref{propdist} is the goal of Subsection~\ref{S:singvals},  Section~\ref{ss:mainthmproof}, and Section~\ref{S:stab}.

\subsection{Singular values for polynomially small random noise}
\label{S:singvals}

As shown in \cite{TVuK2008}, a bound on the singular values allows one to bound the highest-dimensional distances in \eqref{e:Goal}.
 
For an $n$ by $n$ matrix $M_n$, let $\|M_n\|$ denote the spectral norm of $M_n$ (which is also the largest singular value of $M_n$), namely
$$\| M_n\| = \sup_{|\vv{}|=1} |M_n\vv{}|. $$
For a matrix $M$, let the singular values be denoted $$\sigma_1(M) \ge \sigma_2(M)\ge\dots\ge \sigma_n(M) \ge 0.$$

\begin{theorem}[Least singular value bound]\label{lsv0} \emph{\cite{TVu2008}}  Let $A, B, \gamma$ be positive constants,
and let $x$ be a complex-valued random variable with non-zero
finite variance (in particular, the second moment is finite). Then
there are positive constants $C_{1}$ and $C_{2}$  such that the
following holds: if $X_{n}$ is the random $n$ by $n$ matrix whose
entries are iid copies of $x$, and $M_n$ is a deterministic $n$ by $n$ matrix with spectral norm $\norm{M_n} \le n^{B-\gamma}$, then,
$$ \P( \sigma_n(M_n + n^{-\gamma}X_n) \le n^{-C_{1}-\gamma} ) \le C_{2} n^{-A}.$$
\end{theorem}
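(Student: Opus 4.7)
The plan is to first rescale the noise and then apply the compressible/incompressible framework for the smallest singular value. Writing $M_n + n^{-\gamma}X_n = n^{-\gamma}(n^{\gamma}M_n + X_n)$, we have $\sigma_n(M_n + n^{-\gamma}X_n) = n^{-\gamma}\sigma_n(\tilde M + X_n)$ with $\tilde M := n^{\gamma} M_n$ of spectral norm at most $n^{B}$. Thus it suffices to prove that for any deterministic matrix $\tilde M$ with $\|\tilde M\| \le n^B$,
$$\Pr{\sigma_n(\tilde M + X_n) \le n^{-C_1}} \le C_2 n^{-A}$$
for appropriate $C_1, C_2$ depending on $A$, $B$, and the law of $x$.

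Using $\sigma_n(A) = \inf_{\|v\|=1}\|Av\|$, I would split the unit sphere in $\bb C^n$ into \emph{compressible} vectors (those within Euclidean distance $\rho$ of the set of vectors supported on $\le \delta n$ coordinates) and \emph{incompressible} vectors, in the spirit of Rudelson--Vershynin. For the compressible piece, a standard $\kappa$-net argument produces a net $\mathcal N$ of subexponential cardinality; for each fixed $v \in \mathcal N$, the coordinates of $(\tilde M + X_n)v$ are translates of iid linear combinations of entries of $X_n$ each of variance at least $\mathrm{Var}(x) > 0$, so a Paley--Zygmund / second-moment estimate yields $\|(\tilde M + X_n)v\| \ge c$ off an event of probability $e^{-c' n}$. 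Combined with the crude polynomial bound $\|\tilde M + X_n\| \le n^{O(1)}$ (valid with overwhelming probability via the Frobenius bound together with a moment-plus-union estimate, using only the finite second moment of $x$), one extends from $\mathcal N$ to all compressible vectors by Lipschitz continuity, choosing $\delta, \rho, \kappa$ so that the exponential decay beats both the net cardinality and the discretization error.

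For the incompressible piece, I would use the row-to-hyperplane inequality $\sigma_n(A) \ge n^{-1/2}\min_{i\le n}\dist(R_i, H_i)$, where $R_i$ is the $i$-th row of $A = \tilde M + X_n$ and $H_i = \Span\{R_j : j \ne i\}$. Conditional on the other rows, $H_i$ has a deterministic unit normal $v^{(i)}$, and $\dist(R_i, H_i) = |\langle R_i, v^{(i)}\rangle|$ equals $\sum_j X_{ij} v^{(i)}_j$ plus a deterministic shift. If one can show that $v^{(i)}$ is incompressible with probability $1 - O(n^{-A})$, then the inverse Littlewood--Offord theorem gives $\sup_u \Pr{|\langle R_i, v^{(i)}\rangle - u| \le n^{-C_1}} \le n^{-A'}$, and a union bound over $i$ completes the argument.

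The main obstacle is the incompressible case, and it has two parts: establishing that the random normal $v^{(i)}$ is incompressible with polynomially small failure probability (itself an invertibility statement for an $(n-1)\times n$ submatrix, typically handled by an induction/bootstrap on dimension), and then proving the small-ball probability bound against incompressible coefficients when $x$ is assumed only to have nonzero finite variance --- no bounded density, no subgaussian tails, no higher moments beyond the second. Accommodating such weak hypotheses on $x$ is exactly the content of the inverse Littlewood--Offord machinery of Tao and Vu and is the technical heart of \cite{TVu2008}.
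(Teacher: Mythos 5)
Theorem~\ref{lsv0} is not proved in the paper; the text immediately after the statement records only the one-line rescaling observation that $\sigma_n(M_n+n^{-\gamma}X_n)\le n^{-C_1-\gamma}$ is equivalent to $\sigma_n(n^{\gamma}M_n+X_n)\le n^{-C_1}$, and then defers entirely to \cite{TVu2008}. You reproduce that rescaling reduction correctly, but then go on to open the black box and sketch the internals of the cited theorem: the compressible/incompressible decomposition of the sphere, a net argument plus tensorized anticoncentration for the compressible piece, the row-to-hyperplane lower bound $\sigma_n(A)\ge n^{-1/2}\min_i\dist(R_i,H_i)$ for the rest, and inverse Littlewood--Offord small-ball estimates against the random normals. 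That is a faithful high-level description of the Rudelson--Vershynin / Tao--Vu program, but it is a plan rather than a proof.

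The gap is at the center, and you name it yourself: the two genuinely hard steps --- establishing that the random unit normal $v^{(i)}$ is incompressible off an event of probability $O(n^{-A})$, and deriving $\sup_u \Pr{|\langle R_i,v^{(i)}\rangle - u|\le n^{-C_1}}\le n^{-A'}$ from nothing more than a nonzero finite second moment on $x$ (no bounded density, no subgaussian tails, no higher moments) --- are acknowledged and then deferred wholesale to ``the inverse Littlewood--Offord machinery,'' which is precisely the content that would have to be supplied to make this a proof. A smaller point: under only a second-moment hypothesis the operator-norm bound $\|\tilde M + X_n\|\le n^{O(1)}$ is obtained with merely polynomially small, not exponentially small, failure probability (Markov on the squared Frobenius norm); the phrase ``overwhelming probability'' should be read in that weaker, polynomially-small-failure sense, which fortunately is all the argument needs. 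In short, your reduction step agrees exactly with what the paper does; everything past that point is an accurate roadmap of \cite{TVu2008} rather than an independent argument, and for the purposes of this paper the author's choice to prove the rescaling equivalence and cite the result is the right one.
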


The above is a restatement of \cite[Theorem~2.1]{TVu2008} using the fact that $\sigma_n(M_n+n^{-\gamma}X_n) \le n^{-C_1-\gamma}$ is equivalent to 
$\sigma_n(n^{\gamma}M_n+X_n) \le n^{-C_1}$.  Applying Theorem~\ref{lsv0} to the matrices $A_n$ and $B_n$ from the statement of Theorem~\ref{mainthm}, we have with probability 1 that
\begin{equation}\label{svlower}
 \sigma_n(A_n), \sigma_n(B_n) \ge n^{-O(1)},
\end{equation}
for all but finitely many $n$. As pointed out in \cite{TVuK2008}, a polynomial upper bound 
\begin{equation}\label{svupper}
 \sigma_n(A_n), \sigma_n(B_n) \le n^{O(1)} 
\end{equation}
holds with probability 1 for all but finitely many $n$. (The upper bound follows from \eqref{e:Mcond}, the bounded second moments of $x$ and $y$, and the Borel-Cantelli lemma.)

\begin{lemma}\label{lem:Hdim}
Let $X_i$ be the rows of $A_n+zI$ and let $Y_i$ be the rows of $B_n+zI$, where $A_n$ and $B_n$ are as in Theorem~\ref{mainthm}.
For almost all complex numbers $z$ and with probability 1,
$$\abs{\log \dist(X_i,\Span\{X_1,\dots,X_{i-1}\})} \le O(\log n)$$
and
$$\abs{\log \dist(Y_i,\Span\{Y_1,\dots,Y_{i-1}\})}
\le O(\log n)$$
for all but finitely many $n$.
\end{lemma}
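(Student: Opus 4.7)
The plan is to establish the two-sided bound
$$n^{-O(1)} \le \dist(X_i, \Span\{X_1,\dots,X_{i-1}\}) \le n^{O(1)}$$
for every fixed $z \in \bb C$ with probability one, which immediately yields the desired $O(\log n)$ bound on $|\log \dist|$. The argument for $Y_i$ is identical, so I only treat $X_i$.

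The upper bound is elementary: the distance from a vector to any subspace is at most the vector's norm, hence
$$\dist(X_i, \Span\{X_1,\dots,X_{i-1}\}) \le \norm{X_i} \le \fnorm{A_n+zI} \le \fnorm{A_n} + |z|\sqrt n.$$
Lemma~\ref{Aasb} gives that $\frac{1}{n}\fnorm{A_n}^2$ is almost surely bounded, so the right-hand side is $O(\sqrt n)$ almost surely, yielding the desired upper bound.

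For the lower bound, I will use the standard identity: if $w = X_i - \pi X_i$ denotes the component of $X_i$ orthogonal to $\Span\{X_j : j \ne i\}$, then $(A_n+zI)\,w = \norm{w}^2 e_i$ (since $\langle X_j, w\rangle = 0$ for $j \ne i$ and $\langle X_i, w\rangle = \norm{w}^2$), so $\norm{w} = 1/\norm{(A_n+zI)^{-1} e_i} \ge \sigma_n(A_n+zI)$. Since the distance to a subspace only grows when the subspace shrinks, the inclusion $\Span\{X_1,\dots,X_{i-1}\} \subseteq \Span\{X_j : j \ne i\}$ gives
$$\dist(X_i, \Span\{X_1,\dots,X_{i-1}\}) \ge \sigma_n(A_n+zI).$$

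It therefore remains to show $\sigma_n(A_n+zI) \ge n^{-O(1)}$ almost surely for all but finitely many $n$. Writing $A_n+zI = (M_n+zI) + n^{-\gamma}\Phi_n$, the deterministic part satisfies $\norm{M_n+zI} \le \fnorm{M_n} + |z|\sqrt n = O(\sqrt n)$ by \eqref{e:Mcond}, hence $\norm{M_n+zI} \le n^{B-\gamma}$ for some constant $B$ depending on $\gamma$ and $z$. Theorem~\ref{lsv0} applied with, say, $A = 2$ then produces constants $C_1, C_2$ with $\Pr{\sigma_n(A_n+zI) \le n^{-C_1-\gamma}} \le C_2 n^{-2}$, and Borel--Cantelli upgrades this to the required almost-sure eventual lower bound. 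The only real subtlety is recognizing the distance-to-span bound in terms of $\sigma_n$; once that is in place, the lemma is a direct assembly of Lemma~\ref{Aasb}, Theorem~\ref{lsv0}, and Borel--Cantelli.
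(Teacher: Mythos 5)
Your argument is correct, and it reproduces in explicit form what the paper obtains by citing three external ingredients: the lower bound \eqref{svlower} from Theorem~\ref{lsv0} plus Borel--Cantelli, the polynomial upper bound \eqref{svupper} coming from the Frobenius norm, and \cite[Lemma~A.4]{TVuK2008}, which is exactly the comparison $\sigma_n(A)\le d_i(A)\le\fnorm{A}$ that you derive by hand via the orthogonal-complement identity. So this is the same approach, just with the TVuK lemma unpacked. One small notational slip: with the convention $\ip{u}{v}=\sum_k u_k\bar v_k$, orthogonality of $w$ to $X_j$ gives $(A_n+zI)\bar w=\norm w^2 e_i$ rather than $(A_n+zI)w=\norm w^2 e_i$; since $\norm{\bar w}=\norm w$ this does not affect the inequality $\norm w\ge\sigma_n(A_n+zI)$, but it is worth stating correctly.
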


The above lemma follows from \eqref{svlower}, \eqref{svupper}, and 
\cite[Lemma~A.4]{TVuK2008}.  When bounding a sum such as \eqref{e:Goal}, Lemma~\ref{lem:Hdim} allows one to ignore up to $o(n/\log n)$ rows, since a sum of $o(n/\log n)$ numbers that are each at most $O(\log n)$ converges to zero.

\section{Proof of Proposition~\ref{propdist}}
\label{ss:mainthmproof}

We will now prove Proposition~\ref{propdist}, thereby completing the proof of Theorem~\ref{mainthm}.  In the proof, we will use tools described above along with Proposition~\ref{corpolymatA} below which we will prove in Section~\ref{S:stab} as Proposition~\ref{corpolymat}.

Note that Proposition~\ref{propdist} can be re-stated in terms of determinants (see \eqref{detforms}), and thus Proposition~\ref{propdist} is equivalent to the same statement with the rows and corresponding columns of the matrices re-ordered in the same way (since re-ordering rows and columns has no effect on the determinant).  
Recall that $A_n+zI=M_n+n^{-\gamma}\Phi_n+ zI$ and
$B_n+zI=M_n+n^{-\gamma}\Psi_n+ zI$, and note that re-ordering rows and columns of $\Phi_n$ and $\Psi_n$ has no effect since the entries are iid.  Thus, in proving Proposition~\ref{propdist}, we may re-order the rows and corresponding columns as is convenient.  In particular, we may re-order so that $Z_1,\dots,Z_m$ are the first $m$ rows of $M_n+zI$ and are $\epsilon$-stable with the same $\epsilon$ from the assumption in Theorem~\ref{mainthm}.  We may further require that the re-ordering satisfies $\vnorm{Z_1} \ge \vnorm{Z_2} \ge \dots \ge \vnorm{Z_m}$.

\newcommand{\Corpolymatstmt}[1][]{
Let $M_n#1$ be an $n$ by $n$ matrix, let $\Phi_n$ and $\Psi_n$ be $n$ by $n$ complex matrices with each row a random complex vector with mean 0 and variance 1, and let $Z_1,\dots,Z_m$ be the first $m$ rows of $M_n#1$, where $m=\floor{n-\frac{2n}{\log^{1.1}n}}$.  Assume that $\vnorm{Z_1} \ge \vnorm{Z_2} \ge \dots \ge \vnorm{Z_m}$.
If $\{Z_1,\dots, Z_m\}$ is $\epsilon$-stable and  there exists a constant $\gamma > 1.5$ so that
$$\delta_{n,\epsilon}:= \frac{n^{-\gamma/2+3/4}\log^{1/4}(n) \sqrt{\max_{1\le i\le m}
\{1,\vnorm{Z_i}\}}}{\epsilon} \to 0 \qquad \mbox{ as } n \to \infty,$$

then with probability at least $1-1/\log n$ we have, 
$$\abs{\frac1n \sum_{i=1}^{m} \log d_i(M_n#1+n^{-\gamma}\Phi) - \log
d_i(M_n#1+n^{-\gamma}\Psi)} \le
20\deltane
$$
for all sufficiently large $n$, where $d_i(A)$ is the distance from the $i$-th
row of a matrix $A$ to the span of rows $1,2,\dots,i-1$.
}

\begin{proposition} \label{corpolymatA}
\Corpolymatstmt[+zI]
\end{proposition}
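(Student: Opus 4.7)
The approach is to compare both $\log d_i(M_n+zI+n^{-\gamma}\Phi)$ and $\log d_i(M_n+zI+n^{-\gamma}\Psi)$ against the common deterministic quantity $\log d_i(M_n+zI)$; the proposition then follows by the triangle inequality, provided one shows
\be
 \Pr{\frac1n \summ_{i=1}^m \abs{\log d_i(M_n+zI+n^{-\gamma}\Phi)-\log d_i(M_n+zI)} \le 10\deltane} \ge 1 - \tfrac1{2\log n},
\ee
and the analogous bound for $\Psi$. Writing $X_i = Z_i + n^{-\gamma}\phi_i$, the hypothesis that $\{Z_1,\dots,Z_m\}$ is $\eps$-stable immediately gives $d_i(M_n+zI)=\dist(Z_i,\Span\{Z_1,\dots,Z_{i-1}\})\ge\eps$, because the distance from $Z_i$ to any subset of the remaining $Z_j$'s is at least the distance to the full span of them. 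The stability-preservation lemma to be proved in Section~\ref{S:stab} then gives, on a high-probability event (using that $n^{-\gamma}\vnorm{\phi_i}$ is much smaller than $\eps$, which follows from $\deltane\to 0$ and a standard tail bound on $\vnorm{\phi_i}$), that $\{X_1,\dots,X_m\}$ is $(\eps/2)$-stable, hence $d_i(X)\ge\eps/2$. This uniform lower bound is what converts the $\log$ comparison to a ratio comparison in the Lipschitz regime.

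For each $i\le m$, let $\pi_i^X$ and $\tilde\pi_i$ denote the orthogonal projections onto the orthogonal complements of $\Span\{X_1,\dots,X_{i-1}\}$ and $\Span\{Z_1,\dots,Z_{i-1}\}$, respectively. The key identity
\be
 d_i(X)^2 - d_i(M_n+zI)^2 = \bigl(\vnorm{\pi_i^X Z_i}^2 - \vnorm{\tilde\pi_i Z_i}^2\bigr) + 2n^{-\gamma}\operatorname{Re}\angles{\pi_i^X Z_i,\phi_i} + n^{-2\gamma}\vnorm{\pi_i^X\phi_i}^2
\ee
splits the per-row error into a deterministic \emph{projection drift}, a \emph{martingale-difference cross term}, and a negligible $O(n^{-2\gamma})$ quadratic term. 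A subspace-perturbation estimate fed by the $\eps$-stability of the $Z_j$'s bounds $\|\pi_i^X-\tilde\pi_i\|_{\mathrm{op}}$ by something of order $n^{-\gamma}/\eps$ on the good event, so the projection drift is at most $O(n^{-\gamma}\vnorm{Z_i}^2/\eps)$. The cross term, conditional on $\mathcal F_{i-1}=\sig(\phi_1,\dots,\phi_{i-1})$, has mean zero because $\phi_i$ has mean zero and is independent of $\pi_i^X$ and $Z_i$, with conditional variance $O(n^{-2\gamma-1}\vnorm{\pi_i^X Z_i}^2)\le O(n^{-2\gamma-1}\vnorm{Z_i}^2)$.

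To pass from $\abs{d_i(X)^2 - d_i(Z)^2}$ to $\abs{\log d_i(X) - \log d_i(Z)}$, use $\abs{\log a-\log b}\le\abs{a^2-b^2}/\min(a^2,b^2)$ together with $\min\ge\eps^2/4$. Summing the projection-drift contribution over $i\le m$ gives something of order $\eps^{-3}n^{-\gamma}\summ_i\vnorm{Z_i}^2 = O(n^{1-\gamma}/\eps^{3})$ thanks to $\frac1n\fnorm{M_n+zI}^2=O(1)$. The cross-term sum $S := n^{-\gamma}\summ_{i\le m}\operatorname{Re}\angles{\pi_i^X Z_i,\phi_i}$ is a martingale whose predictable quadratic variation is at most $n^{-2\gamma-1}\summ_i\vnorm{Z_i}^2 = O(n^{-2\gamma})$, so a Burkholder or Azuma estimate (after a mild truncation of $\vnorm{\phi_i}$ at $O(\log^{1/2}n)$) yields $\abs{S} = O(n^{-\gamma+1/2}\log^{1/2}n)$ with failure probability $O(1/\log n)$. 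The main obstacle I anticipate is matching these per-piece bounds to the exact form of $\deltane$: the exponent $n^{-\gamma/2+3/4}$ (rather than the naive $n^{-\gamma+1/2}$) reflects that one takes square roots when comparing $d_i$ to $d_i^2$, and the appearance of $\sqrt{\max_i\vnorm{Z_i}}$ (in place of $\max_i\vnorm{Z_i}$) will force a Cauchy--Schwarz split of the cross-term sum that exploits the re-ordering $\vnorm{Z_1}\ge\cdots\ge\vnorm{Z_m}$. This ordering is crucial because it puts the large-norm rows at the smallest indices, where the projections $\pi_i^X$ act on near-full-rank subspaces and the corresponding terms are the easiest to bound; without it, an isolated large-norm row could dominate the fluctuation budget.
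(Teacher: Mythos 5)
Your high-level plan (compare both perturbed quantities to the unperturbed $d_i(M_n+zI)$, use the triangle inequality, and turn $\log$-differences into linear differences via a uniform lower bound on $d_i$) mirrors the paper's, and the initial observation that $d_i(M_n+zI)\ge\eps$ for $i\le m$ is correct. But the technical core is different from the paper's --- which uses the Lindeberg-style one-row-at-a-time replacement (Lemmas~\ref{stablem1}--\ref{contstab} and Proposition~\ref{stabprop}) --- and the route you propose has gaps.

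First, the claimed operator-norm bound $\|\pi_i^X-\tilde\pi_i\|_{\mathrm{op}}=O(n^{-\gamma}/\eps)$ is not justified by $\eps$-stability. What $\eps$-stability controls is the \emph{diagonal} of the inverse Gram matrix $G^{-1}$ of $\{Z_1,\dots,Z_{i-1}\}$ (namely $(G^{-1})_{jj}\le 1/\eps^2$), not its operator norm. A Wedin-type projection-perturbation estimate needs $\sigma_{\min}$ of the basis matrix, and from $\eps$-stability alone one only gets $\sigma_{\min}\ge\eps/\sqrt{i-1}$ (via $\|G^{-1}\|_{\mathrm{op}}\le\tr(G^{-1})\le(i-1)/\eps^2$); this bound is essentially tight. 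So the honest projection perturbation acquires an extra $\sqrt{i}$ factor, which feeds a factor of $n^{1/2}$ into the summed drift.

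Second, the Lipschitz step $|\log a-\log b|\le|a^2-b^2|/\min(a^2,b^2)$ with $\min\ge\eps^2/4$ must be applied \emph{per row} and with absolute values inside the sum, since you are bounding $\frac1n\sum_i|\log d_i(X)-\log d_i(M_n+zI)|$ via it. But then the cross-term contribution is $\sum_i|\mathrm{Re}\angles{\pi_i^X Z_i,\phi_i}|$, not the signed sum $S$, and the martingale/Azuma argument does not apply: per-row absolute values destroy the cancellation you are invoking. If you instead keep the signs to exploit cancellation, you need a Taylor expansion of $\log(1+u_i)$, and the quadratic error $\sum_i u_i^2$ reintroduces the problem.

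Third, the powers of $\eps$ do not close. The hypothesis gives only $\eps\gg n^{3/4-\gamma/2}\log^{1/4}n\sqrt{\max_i\{1,\vnorm{Z_i}\}}$, i.e., control of a single power $1/\eps$, which is exactly the single power appearing in $\deltane$. Your squared-distance conversion plus the projection bound produce $\eps^{-3}$ (or $\eps^{-2}$ after the correction in the first point), and for $\gamma$ large enough (say $\gamma>3.5$) with $\eps$ near its minimal allowed size, those extra factors of $1/\eps$ are not controlled by the hypothesis. The paper avoids this by bounding $|d_i(M_n+zI+n^{-\gamma}\Phi)-d_i(M_n+zI)|$ directly (not squared), via the Lindeberg replacement Proposition~\ref{stabprop}, so that only a single $1/\eps$ appears and the $\vnorm{Z_i}^{1/2}$ factor cancels exactly against $\sqrt{\max_j\{1,\vnorm{Z_j}\}}$ in $\deltane$.
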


The proof approach Proposition~\ref{corpolymatA} is to add up the errors from perturbing each row with polynomially small random noise. If the set of rows is $\epsilon$-stable with $\epsilon$ enough larger than the polynomially small random noise, then the sum of all the errors can be shown to be small.  See Section~\ref{S:stab} and Proposition~\ref{corpolymat} (which is a restatement of Proposition~\ref{corpolymatA}) for details.

To prove Proposition~\ref{propdist}, we will first exclude rows from $\{Z_1,\dots, Z_m\}$ that are large.  From the assumption \eqref{e:Mcond} that $\sup_n \frac 1n \fnorm{M_n}^2 < \infty$, we know that all but at most $\frac{n}{\log^{1.1}n}$ rows of $M_n+zI$ satisfy $\vnorm{Z_i} > O(\log^{.55} n)$.  We will exclude the first $\frac{n}{\log^{1.1}n}$ rows from the stable set, focusing instead on the set $\{Z_i: \frac{n}{\log^{1.1}n} < i \le m\}$ which is $\epsilon$-stable and satisfies
$$\vnorm{Z_i} \le O(\log^{.55} n),\mbox{ for } \frac{n}{\log^{1.1} n \le i \le m}.$$
Next, we will re-order the rows and corresponding columns so that the set  $\{Z_i: \frac{n}{\log^{1.1}n} < i \le m\}$ is the first $m'=\floor{m - \frac{n}{\log^{1.1}n}} = \floor{n - \frac{2n}{\log^{1.1}n}}$ rows of the re-ordered matrix.

Recall that to prove Proposition~\ref{propdist} we must show for almost every $z\in \bb C$ that
\begin{equation}\label{sum2bound}
\frac1n \sum_{i=1}^{n} \log d_i(M_n+zI+n^{-\gamma}\Phi) - \log d_i(M_n+zI+n^{-\gamma}\Psi)
\end{equation}
converges to zero in probability, where $d_i(A)$ is the distance from the
$i$-th row of matrix $A$ to the span of the first $i-1$ rows.

Lemma~\ref{lem:Hdim} shows that the portion of the sum in \eqref{sum2bound} for
$m' < i \le n$
converges to zero in probability, and Proposition~\ref{corpolymatA}  proves that
portion of the sum in \eqref{sum2bound} for $1 \le i \le
m'=\floor{n-2n/\log^{1.1} n}$ also converges to zero in
probability (note that $\deltane < \frac{n^{3/4-\gamma/2}\log
n}{\epsilon}\to 0$ as $n\to\infty$ by assumption). \hfill{$\square$}

\section{The stability approach for small random noise}\label{S:stab}

Using Theorem~\ref{TVuK2.1}, we see that one way to prove universality is to control
quantities of the form
$$\dist\lt(Z_i+f(n)\phi_n, \Span\{Z_1+f(n)\phi_1,\dots,Z_{i-i}+f(n)\phi_{i-1}\}\rt),$$
where $Z_j$ is a row of $M_n$ and $\phi_j$ is a random $n$-dimensional vector with iid mean zero, variance $1/n$ entries.

As a warm-up, we show in the proposition below that most matrices satisfy the $\epsilon$-stable condition given in Theorem~\ref{mainthm} when one takes $\gamma > 5/3$.  

\begin{proposition} \label{p:manystable}
 Let $R_n$ be a random matrix where the entries are iid copies of $x/\sqrt n$, where $x$ is a mean zero, variance 1 complex random variable.  Then, with probability one, $R_n$ contains a set of $n-n^{5/6}$ rows that is $(n^{-1/12}/2)$-stable, for all but finitely many $n$.  Furthermore, the same result holds for $R_n+zI_n$ where $z$ is a fixed complex number and $I_n$ is the $n$ by $n$ identity matrix.
\end{proposition}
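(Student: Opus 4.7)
My plan is to show that for a fixed subset $S$ consisting of the first $m = \floor{n - n^{5/6}/2}$ rows of $R_n$, every row in $S$ is already far enough from the span of the others to yield the stability bound, with probability high enough to invoke Borel-Cantelli for the almost-sure conclusion. The first step is to write the squared distance as a quadratic form: for each $j \in S$, let $d_j^2 = v_j^* P_j v_j$, where $v_j$ is the $j$-th row of $R_n$ and $P_j$ is the orthogonal projection onto the orthogonal complement of $\Span(\{v_i : i \in S, i \ne j\})$ in $\C^n$. Almost surely this complement has dimension $r_j \ge n - m + 1 \ge n^{5/6}/2 + 1$. Conditioning on the rows indexed by $S \setminus \{j\}$, the projection $P_j$ is deterministic and $v_j$ remains independent with iid mean-$0$, variance-$1/n$ entries, so $\E[d_j^2 \mid P_j] = \tr(P_j)/n = r_j/n \ge n^{-1/6}/2$.

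The second step is to concentrate $d_j^2$ around its mean using a moment inequality for quadratic forms of iid vectors. After a standard truncation $x \mapsto x \I{|x|\le T_n}$ to ensure all moments exist (which alters $R_n$ on an event of summable probability for an appropriate $T_n$), the moment inequality yields $\Pr{d_j^2 < r_j/(2n)} \le C_p r_j^{-p}$ with a constant depending on the $(2p)$-th moment of the truncated variable. Since $r_j/(2n) \ge n^{-1/6}/4$, the event $d_j < n^{-1/12}/2$ is contained in this small-probability event. Union-bounding over $j \in S$ yields a failure probability $O(n^{1 - 5p/6})$, summable in $n$ for $p$ sufficiently large. By Borel-Cantelli, almost surely for all but finitely many $n$, every $j \in S$ satisfies $d_j \ge n^{-1/12}/2$, producing a $(n^{-1/12}/2)$-stable set of size $m \ge n - n^{5/6}$.

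For $R_n + zI_n$, the $j$-th row becomes $v_j + z e_j$, so $d_j^2 = \|P_j v_j\|^2 + 2\Re(\bar z \langle P_j v_j, P_j e_j\rangle) + |z|^2 \|P_j e_j\|^2$. Conditioning on the other rows as before, the cross-term has conditional mean $0$ under the iid randomness of $v_j$ and the constant term is nonnegative, so the bound $\E[d_j^2 \mid P_j] \ge r_j/n$ is preserved. The concentration argument for the quadratic form is unchanged by the deterministic shift $z e_j$, and the remaining union-bound and Borel-Cantelli steps carry over verbatim.

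The main obstacle is controlling the concentration of $v_j^* P_j v_j$ strongly enough to be simultaneously union-bounded over the $\sim n$ indices and summable in $n$, while only assuming finite variance on $x$. Balancing the truncation level $T_n$---large enough that $R_n$ equals its truncated version with summable error, yet small enough that the $(2p)$-th moment of the truncated variable does not overwhelm the $r_j^{-p}$ decay---is the delicate step, and likely motivates the specific choices of the exponents $1/12$ and $5/6$ in the stability parameter and size of the excluded set.
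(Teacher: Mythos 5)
Your high-level plan matches the paper's exactly: fix a large subset of rows, bound the distance from each row to the span of the others via a concentration estimate, union-bound over rows, and invoke Borel--Cantelli. The paper simply cites a known distance-concentration result (Proposition~4.2 of the author's sparse-random-matrix paper, equivalently Proposition~5.1 of Tao--Vu--Krishnapur), which gives the bound $\Pr{d_j \le n^{-1/12}/2} \le 6\exp(-n^{1/2})$, and then union-bounds and applies Borel--Cantelli. Your decomposition $d_j^2 = v_j^* P_j v_j$, conditioning on the other rows to get $\E[d_j^2 \mid P_j] = \tr(P_j)/n \ge r_j/n$, and your handling of the $zI_n$ shift are all sound.

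The gap is in the concentration step. Under only a finite-variance assumption, the proposed ``truncate at $T_n$, then apply a $p$-th moment inequality'' strategy cannot be completed as stated. Making the truncation error summable over $n$ requires $n^2 \Pr{|x| > T_n}$ to be summable; since the hypotheses give no tail rate beyond $\Pr{|x| > T} = o(T^{-2})$, this forces $T_n \gtrsim n^{3/2}$ in the worst case. But then the truncated $(2p)$-th moment grows like $T_n^{2p-2} \gtrsim n^{3(p-1)}$, which, inserted into any Rosenthal/Markov bound for the quadratic form $v_j^* P_j v_j$ over a rank-$r_j \approx n^{5/6}$ projection, produces a failure probability that is not even $o(1)$ after the union bound, let alone summable. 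The exponential bound $6\exp(-n^{1/2})$ in the cited proposition is obtained by a different mechanism (a Talagrand-type concentration of the convex 1-Lipschitz map $X \mapsto \dist(X,V)$ after a low-level truncation, with a separate second-moment argument to control the discarded tail) rather than a bare moment inequality. Either cite the distance-concentration lemma as the paper does, or replace the moment-inequality step with a concentration-of-measure argument of the Talagrand type that does not require the truncation level to be polynomially large.
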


\begin{proof}
Since we may take $z=0$, it suffices to prove the result for $R_n+zI_n$.
We will show that the first $m$ rows of $R_n+zI_n$ form a stable set.  Let
$d_{i,m}(R_n +zI_n)$ be the distance from the $i$-th row to span of the first $m$
rows not including row $i$.  Assuming that $m \le n-n^{5/6}$, we can apply
\cite[Proposition~4.2]{PjmwSparseRandMat2011} (see also
\cite[Proposition~5.1]{TVuK2008}) to get 
$$\Pr{d_{i,m}(R_n+zI_n) \le n^{-1/12}/2} \le6\exp(-n^{1/2}),$$
for each $1\le i \le m$, for all sufficiently large $n$.  By the union bound, the probability that any of the $m=n-n^{5/6}$ rows in the stable set satisfy $d_{i,m}(R_n+zI_n) \le n^{-1/12}/2$ is at most $6m\exp(-n^{1/2})\le 6n\exp(-n^{1/2})$.  This probability is summable in $n$, and so the Borel-Cantelli lemma completes the proof.
\end{proof}

\subsection{Small perturbations of one row}

\begin{lemma}\label{stablem1}
Let $Z_1,\dots,Z_i$ and $\phi_i$ be $n$-dimensional complex vectors, and let
$f_i(n)$ be a non-negative real function of $n$.  Then
$$\abs{
\dist( Z_i, \Span\{Z_1,\dots,Z_{i-1}\})-
\dist( Z_i+f_i(n)\phi_i, \Span\{Z_1,\dots,Z_{i-1}\})
}\le f_i(n) \vnorm{\phi_i}$$
\end{lemma}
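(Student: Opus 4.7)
The plan is to recognize the lemma as the statement that the function $w \mapsto \dist(w, V)$ is $1$-Lipschitz for any fixed subspace $V \subseteq \bb{C}^n$, specialized to $V = \Span\{Z_1, \dots, Z_{i-1}\}$, $w_1 = Z_i$, and $w_2 = Z_i + f_i(n) \phi_i$. Since $w_1 - w_2 = -f_i(n)\phi_i$ has norm $f_i(n)\vnorm{\phi_i}$, the $1$-Lipschitz property immediately yields the claimed bound. The key observation that keeps the argument clean is that the spanning vectors $Z_1, \dots, Z_{i-1}$ are identical on both sides of the difference being estimated, so the subspace $V$ is fixed throughout: only the outer point is being perturbed.

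To justify the Lipschitz property, I would first write $\dist(w, V) = \vnorm{P_{V^\perp} w}$, where $P_{V^\perp}$ is the orthogonal projection onto the orthogonal complement $V^\perp$. Then, by the reverse triangle inequality applied in $V^\perp$,
\[
\abs{\vnorm{P_{V^\perp} w_1} - \vnorm{P_{V^\perp} w_2}} \le \vnorm{P_{V^\perp}(w_1 - w_2)} \le \vnorm{w_1 - w_2},
\]
where the last inequality uses that orthogonal projections are norm-nonincreasing. Substituting $w_1 - w_2 = -f_i(n)\phi_i$ finishes the proof.

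An alternative, completely elementary route avoids the projection machinery: pick any $v^* \in V$ achieving (or approximating to within arbitrary $\eta > 0$) the infimum defining $\dist(Z_i, V)$, and note that $v^*$ is also a candidate in the infimum defining $\dist(Z_i + f_i(n)\phi_i, V)$. The triangle inequality then gives
\[
\dist(Z_i + f_i(n)\phi_i, V) \le \vnorm{Z_i + f_i(n)\phi_i - v^*} \le \dist(Z_i, V) + f_i(n)\vnorm{\phi_i} + \eta,
\]
and symmetrically in the other direction; letting $\eta \to 0$ gives the two-sided bound. I do not anticipate any real obstacle here: the statement is a textbook fact about the distance function to a subspace, and the only thing to be careful about is that the same subspace appears on both sides so no additional perturbation of $V$ enters the estimate.
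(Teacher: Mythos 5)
Your proof is correct and, in its second ``elementary route,'' is exactly what the paper means by its one-line justification ``The result follows from the triangle inequality''; the first route via $P_{V^\perp}$ and the reverse triangle inequality is an equivalent packaging of the same observation. Nothing is missing.
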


\begin{proof}
 The result follows from the triangle inequality.
\end{proof}

\begin{lemma}\label{stablem2}
Let $Z_1,\dots,Z_i$ and $\phi_1$ be $n$-dimensional complex vectors, and let
$f_1(n)$ be a non-negative real function of $n$.  
Assume that 
$$ d_1:=\dist(Z_1, \Span\{Z_2,\dots,Z_{i-1}\}) > f_1(n)\vnorm{\phi_1}.$$

Then
\begin{align*}
&\hspace{-1.5cm}\abs{
\dist( Z_i, \Span\{Z_1,\dots,Z_{i-1}\})-
\dist( Z_i, \Span\{Z_1+f_1(n)\phi_1,Z_2,\dots,Z_{i-1}\})
}\\
&\le \vnorm{Z_i} f_1(n) \vnorm{\phi_1}\lt( \frac{4d_1 + 2f_1(n)\vnorm{\phi_1}
}{(d_1 - f_1(n)\vnorm{\phi_1})^2} \rt).
\end{align*}
Furthermore, if one assumes that $d_1 \ge \epsilon > 2 f_1(n)\vnorm{\phi_1}$,
then one can simplify the upper bound noting that
$$
\lt( \frac{4d_1 + 2f_1(n)\vnorm{\phi_1}
}{(d_1 - f_1(n)\vnorm{\phi_1})^2} \rt) 
\le 
\frac{20}{\epsilon}
.$$
\end{lemma}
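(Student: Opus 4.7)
The plan is to reduce the inequality to an operator-norm estimate on the difference of two rank-one orthogonal projections, since the two spans in the statement differ only by the perturbation of the single vector $Z_1$. Write $W = \Span\{Z_2,\ldots,Z_{i-1}\}$ and split off the components orthogonal to $W$: set $a = (I - P_W) Z_1$ and $a' = (I - P_W)(Z_1 + f_1(n)\phi_1) = a + f_1(n)(I - P_W)\phi_1$, where $P_W$ is orthogonal projection onto $W$. Then $\vnorm{a} = d_1$, $\vnorm{a - a'} \le f_1(n)\vnorm{\phi_1}$, and the hypothesis $d_1 > f_1(n)\vnorm{\phi_1}$ yields $\vnorm{a'} \ge d_1 - f_1(n)\vnorm{\phi_1} > 0$. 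Because $a, a' \in W^\perp$, the two subspaces in the lemma orthogonally decompose as $W \oplus \Span\{a\}$ and $W \oplus \Span\{a'\}$, so the corresponding orthogonal projections $P$ and $P'$ satisfy $P - P' = P_a - P_{a'}$, where $P_a$ and $P_{a'}$ are the rank-one projections onto $\Span\{a\}$ and $\Span\{a'\}$.

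Since the map $y \mapsto \vnorm{(I - P_U) y}$ is $1$-Lipschitz for any subspace $U$, the reverse triangle inequality bounds the left-hand side of the lemma by $\vnorm{(P - P') Z_i} \le \norm{P_a - P_{a'}}\vnorm{Z_i}$, where $\norm{\cdot}$ denotes the operator norm. Thus the whole claim reduces to showing
\begin{equation*}
\norm{P_a - P_{a'}} \le \frac{f_1(n)\vnorm{\phi_1}\bigl(4 d_1 + 2 f_1(n)\vnorm{\phi_1}\bigr)}{(d_1 - f_1(n)\vnorm{\phi_1})^2}.
\end{equation*}

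For this, I would put both rank-one projections over a common denominator and telescope:
\begin{equation*}
P_a - P_{a'} = \frac{a a^*}{\vnorm{a}^2} - \frac{a' a'^*}{\vnorm{a'}^2} = \frac{a a^*(\vnorm{a'}^2 - \vnorm{a}^2) + \vnorm{a}^2(a a^* - a' a'^*)}{\vnorm{a}^2 \vnorm{a'}^2}.
\end{equation*}
The two pieces of the numerator are controlled by elementary identities: $|\vnorm{a'}^2 - \vnorm{a}^2| \le (\vnorm{a} + \vnorm{a'})\vnorm{a - a'}$, and the telescoping identity $a a^* - a' a'^* = a(a - a')^* + (a - a') a'^*$ combined with $\norm{uv^*} = \vnorm{u}\vnorm{v}$ for rank-one matrices gives $\norm{a a^* - a' a'^*} \le (\vnorm{a} + \vnorm{a'})\vnorm{a - a'}$. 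Since $\norm{a a^*} = \vnorm{a}^2$, the operator norm of the numerator is at most $2\vnorm{a}^2(\vnorm{a} + \vnorm{a'})\vnorm{a - a'}$, so $\norm{P_a - P_{a'}} \le 2(\vnorm{a} + \vnorm{a'})\vnorm{a - a'}/\vnorm{a'}^2$. Plugging in $\vnorm{a} + \vnorm{a'} \le 2 d_1 + f_1(n)\vnorm{\phi_1}$, $\vnorm{a'} \ge d_1 - f_1(n)\vnorm{\phi_1}$, and $\vnorm{a - a'} \le f_1(n)\vnorm{\phi_1}$ produces exactly the claimed ratio.

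The ``furthermore'' assertion is then a pure scalar estimate: under $d_1 \ge \epsilon > 2 f_1(n)\vnorm{\phi_1}$ one has $d_1 - f_1(n)\vnorm{\phi_1} > d_1/2$ and $4 d_1 + 2 f_1(n)\vnorm{\phi_1} < 5 d_1$, so the fraction is at most $5 d_1 / (d_1/2)^2 = 20/d_1 \le 20/\epsilon$. I do not expect any serious obstacle; the only point demanding care is that \emph{both} contributions to the numerator of $P_a - P_{a'}$---the $|\vnorm{a'}^2 - \vnorm{a}^2|$ term and the $\norm{a a^* - a' a'^*}$ term---are controlled by $(\vnorm{a} + \vnorm{a'})\vnorm{a - a'}$, and it is precisely this bookkeeping that forces the squared denominator $(d_1 - f_1(n)\vnorm{\phi_1})^2$ rather than a first power.
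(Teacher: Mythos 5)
Your proof is correct and follows essentially the same route as the paper: both reduce the problem to the effect of perturbing $Z_1$ on the rank-one orthogonal projection complementary to $W=\Span\{Z_2,\dots,Z_{i-1}\}$, i.e.\ to bounding $\vnorm{(P_a-P_{a'})Z_i}$ with $a=(I-P_W)Z_1$ and $a'=(I-P_W)(Z_1+f_1(n)\phi_1)$. The paper performs the subsequent estimate by a direct vector computation (writing $e=a'-a$, parametrizing $\vnorm{a+e}=\vnorm{a}+c_0\vnorm{e}$ with $c_0\in[-1,1]$, and applying Cauchy--Schwarz term by term), whereas you pass to the operator norm $\norm{P_a-P_{a'}}$ and use the common-denominator identity plus $\norm{aa^*-a'a'^*}\le(\vnorm{a}+\vnorm{a'})\vnorm{a-a'}$; this is a marginally cleaner and more modular bookkeeping of the same cancellations, and it lands on the identical bound $\frac{2(\vnorm a+\vnorm{a'})\vnorm{a-a'}}{\vnorm{a'}^2}$ and the same final expression.
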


\begin{proof}
\newcommand\utilde{\widetilde u_1}
 Let $u_2,\dots,u_{i-1}$ be an orthonormal basis for $Z_2,\dots,Z_{i-1}$, let
$$u_1 = Z_1 - \sum_{j=2}^{i-1} u_j \angles{Z_1,u_j},$$
where $\angles{\cdot,\cdot}$ is the standard complex inner product (so $\vnorm{u_1}=d_1$).  Also, let
$$\utilde = Z_1 + f_1(n)\phi_1 - \sum_{j=2}^{i-1}u_j \angles{Z_1+f_1(n)\phi_1, u_j}.$$

Note that
\begin{align*}
&\hspace{-.5cm}\abs{
\dist( Z_i, \Span\{Z_1,\dots,Z_{i-1}\})-
\dist( Z_i, \Span\{Z_1+f_1(n)\phi_1,Z_2,\dots,Z_{i-1}\})
} \\
&\leq \vnorm{ \frac{u_1}{\vnorm{u_1}^2}\angles{Z_i,u_1} - 
\frac{\utilde}{\vnorm{\utilde}^2}\angles{Z_i,\utilde}}, 
\end{align*}
and thus the current lemma can be proven by studying $u_1$, $\utilde$, and
$Z_i$.  Let $e=\utilde-u_1= f_1(n)(\phi_1-\sum_{j=2}^{i-1} u_j
\angles{\phi_1,u_j})$, and note that $\vnorm{e} \le f_1(n)\vnorm{\phi_1}$.
 
We may write
\begin{align*}
&\hspace{-.5cm}\frac{\utilde}{\vnorm{\utilde}^2}\angles{Z_i,\utilde} - \frac{u_1}{\vnorm{u_1}^2}\angles{Z_i,u_1} \\
&= u_1 \frac{\angles{Z_i, u_1}}{\vnorm{u_1+e}^2}+
e \frac{\angles{Z_i, u_1+e}}{\vnorm{u_1+e}^2}+
u_1 \frac{\angles{Z_i, e}}{\vnorm{u_1+e}^2}
- \frac{u_1}{\vnorm{u_1}^2}\angles{Z_i,u_1}\\
&=\frac{u_1}{\vnorm{u_1}^2}\angles{Z_i,u_1}\lt(\frac{\vnorm{u_1}^2}{\vnorm{u_1+e}^2} -1\rt)
+\frac{e\angles{Z_i, u_1+e}+ u_1 \angles{Z_i, e}}{\vnorm{u_1+e}^2}.
\end{align*}

Using Cauchy-Schwartz, the triangle inequality, and the fact that $\vnorm{u_1+e} = \vnorm{u_1}+c_0\vnorm{e}$ for some constant $-1\le c_0 \le 1$, we can compute that the above vector has length at most
\begin{align*}
&\hspace{-.5cm} \vnorm{Z_i} \left(\abs{\frac{\vnorm{u_1}^2 -\vnorm{u_1+e}^2}{\vnorm{u_1+e}^2}}
+ \frac{\vnorm{e}}{\vnorm{u_1+e}} 
+ \frac{\vnorm{u_1}\vnorm{e}}{\vnorm{u_1+e}^2} 
\right)\\
& =\frac{\vnorm{Z_i}}{(\vnorm{u_1}+c_0\vnorm{e})^2} \left(\abs{-2c_0\vnorm{u_1}\vnorm{e} -c_0^2\vnorm{e}^2}
+ \vnorm{e}(\vnorm{u_1}+c_0\vnorm{e}) 
+ \vnorm{u_1}\vnorm{e}
\right)\\
& \le\frac{\vnorm{Z_i}}{(\vnorm{u_1}+c_0\vnorm{e})^2} 
\left((2\abs{c_0}+2)\vnorm{u_1}\vnorm{e} +
\vnorm{e}^2(c_0^2+c_0) 
\right).\\
\end{align*}

Using the the assumption that $\vnorm{u_1}= d_1 > f_1(n)\vnorm{\phi_1}$ and the facts that $-1\le c_0\le 1$ and $\vnorm e \le f_1(n)\vnorm{\phi_1}$, the above distance is at most
\begin{align*}
& \hspace{-.5cm} \vnorm{Z_i}
\frac{\left(4\vnorm{u_1}f_1(n)\vnorm{\phi_1} + 2 f_1(n)^2\vnorm{\phi_1}^2 
\right)}{(\vnorm{u_1}-f_1(n)\vnorm{\phi_1})^2} 
,\\ 
\end{align*}
which completes the proof.
\end{proof}

\subsection{Stability and its changes in the presence of small random noise}

\begin{lemma}
 \label{indlemma}
Let $Z_1,\dots,Z_k$ and $\phi_1$ be $n$-dimensional complex vectors, and let
$f_1(n)$ be a non-negative real function of $n$.  Assume that $\{Z_1,\dots,Z_k\}$
is $\epsilon$-stable and that $\epsilon > 2 f_1(n)\vnorm{\phi_1}$.

Then $\{Z_1+f_1(n)\phi_1,Z_2,\dots,Z_k\}$ is
$$\lt(\epsilon-f_1(n)\vnorm{\phi_1}\max_{2\le i\le k}\lt\{1,\vnorm{Z_i}\lt( \frac{20}\epsilon \rt)\rt\} \rt)\mbox{-stable}.$$
\end{lemma}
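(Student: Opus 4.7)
The plan is to verify the $\epsilon$-stability condition for the perturbed set $\{Z_1+f_1(n)\phi_1, Z_2,\dots,Z_k\}$ by checking each vector separately: we need to show that each vector in this set is at distance at least $\epsilon - f_1(n)\vnorm{\phi_1}\max_{2\le i\le k}\{1,\vnorm{Z_i}(20/\epsilon)\}$ from the span of the remaining $k-1$ vectors. We then take the worst case across the two types of vectors. The two preceding lemmas (Lemma~\ref{stablem1} and Lemma~\ref{stablem2}) are exactly tailored to handle the two cases that arise.

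First, I would handle the perturbed vector itself. That is, consider $\dist(Z_1+f_1(n)\phi_1, \Span\{Z_2,\dots,Z_k\})$. Since the spanning set here is fixed (none of $Z_2,\dots,Z_k$ has been perturbed), Lemma~\ref{stablem1} applies directly with $i=1$ and the span $\Span\{Z_2,\dots,Z_k\}$: the distance changes by at most $f_1(n)\vnorm{\phi_1}$. Because $\{Z_1,\dots,Z_k\}$ is $\epsilon$-stable, the unperturbed distance is at least $\epsilon$, so the perturbed distance is at least $\epsilon - f_1(n)\vnorm{\phi_1}$.

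Next, I would handle each unperturbed vector $Z_j$ for $2\le j\le k$. Here the task is to bound $\dist(Z_j,\Span\{Z_1+f_1(n)\phi_1, Z_2,\dots,\widehat{Z_j},\dots,Z_k\})$ from below, where the hat denotes omission. This is exactly the setup of Lemma~\ref{stablem2}, with $Z_j$ playing the role of ``$Z_i$'' and with the remaining $Z$'s (minus $Z_j$) playing the roles of ``$Z_2,\dots,Z_{i-1}$''. The hypothesis $d_1 := \dist(Z_1, \Span\{Z_2,\dots,\widehat{Z_j},\dots,Z_k\}) \ge \epsilon > 2f_1(n)\vnorm{\phi_1}$ follows from the $\epsilon$-stability of $\{Z_1,\dots,Z_k\}$ together with the assumption on $\epsilon$. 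Applying the simplified form of Lemma~\ref{stablem2} bounds the change in distance by $\vnorm{Z_j}f_1(n)\vnorm{\phi_1}(20/\epsilon)$, and hence the perturbed distance is at least $\epsilon - \vnorm{Z_j}f_1(n)\vnorm{\phi_1}(20/\epsilon)$.

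Finally, I would combine both estimates by taking the minimum of the two bounds across all $k$ vectors. The vector-$1$ bound contributes a decrease of $f_1(n)\vnorm{\phi_1}$ (handled by the ``$1$'' inside the max), while each $j\ge 2$ contributes $\vnorm{Z_j}f_1(n)\vnorm{\phi_1}(20/\epsilon)$. The worst case is precisely $f_1(n)\vnorm{\phi_1}\max_{2\le i\le k}\{1,\vnorm{Z_i}(20/\epsilon)\}$, yielding the claimed stability constant. There is no real obstacle here since the two preceding lemmas do all the analytic work; the only care required is bookkeeping, namely invoking Lemma~\ref{stablem2} after an implicit relabeling of the ``others'' and verifying that the needed hypothesis $d_1 > f_1(n)\vnorm{\phi_1}$ is inherited from $\epsilon$-stability plus $\epsilon > 2f_1(n)\vnorm{\phi_1}$.
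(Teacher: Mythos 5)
Your proposal is correct and follows essentially the same route as the paper's proof: handle $Z_1+f_1(n)\phi_1$ via Lemma~\ref{stablem1}, handle each $Z_j$ for $2\le j\le k$ via the simplified form of Lemma~\ref{stablem2} (using that $d_1 \ge \dist(Z_1,\Span\{Z_2,\dots,Z_k\}) \ge \epsilon > 2f_1(n)\vnorm{\phi_1}$), and take the worst-case decrease, which is exactly what the $\max$ in the statement encodes. The small extra remarks you include (the span inclusion giving $d_1\ge\epsilon$, and the $1$ inside the max covering the $Z_1$ case) are accurate and match the paper's intent.
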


\begin{proof}
 By Lemma~\ref{stablem1}, we know that
$$\abs{
\dist( Z_1, \Span\{Z_2,\dots,Z_{k}\})-
\dist( Z_1+f_1(n)\phi_1, \Span\{Z_2,\dots,Z_{k}\})
}\le f_1(n) \vnorm{\phi_1}.$$

For $2\le i \le k$, let $Z_2,\dots,\widehat Z_i,\dots, Z_k$ denote the set $\{Z_j: 2\le j\le k, j \ne i\}$.  By Lemma~\ref{stablem2} we know that
\begin{align}
&\nonumber\hspace{-.5cm}\abs{
\dist( Z_i, \Span\{Z_1,Z_2,\dots,\widehat Z_i,\dots,Z_{k}\})-
\dist( Z_i, \Span\{Z_1+f_1(n)\phi_1,Z_2,\dots, \widehat Z_i, \dots,Z_{k}\})
}\\
&\le \vnorm{Z_i} f_1(n) \vnorm{\phi_1}\lt( \frac{4d_1 + 2f_1(n)\vnorm{\phi_1}
}{(d_1 - f_1(n)\vnorm{\phi_1})^2} \rt),\label{eq:bdinspan}
\end{align}
where $d_1 = \dist(Z_1,\Span\{Z_2,\dots,\widehat Z_i,\dots,Z_k\})$.  By the
$\epsilon$-stable assumption, $d_1 \ge \epsilon > 2 f_1(n)\vnorm{\phi_1}$, so the
bound in \eqref{eq:bdinspan} is at most $ \vnorm{Z_i} f_1(n) \vnorm{\phi_1}(20/\epsilon)$.
\end{proof}

\newcommand\ztilde[1]{\widetilde Z_{#1}}

\begin{lemma}[Continued stability]
 \label{contstab}
Let $Z_1,\dots,Z_k$ and $\phi_1,\dots,\phi_k$ be $n$-dimensional complex vectors,
and let $f_1(n),\dots,f_k(n)$ be non-negative real functions of $n$.  Assume that $\{Z_1,\dots,Z_k\}$ is $\epsilon$-stable and 
that $$20\ge \epsilon > \sqrt{40 k \max_{1\le i\le
k}\{f_i(n)\vnorm{\phi_i}\}\lt(\max_{1\le i\le k}\{1,\vnorm{Z_i}\}+\max_{1\le
i\le k} \{f_i(n)\vnorm{\phi_i}\} \rt)}.$$
Then, for each $1\le j \le k$, we have that
$$\lt\{\ztilde 1, \ztilde 2, \dots, \ztilde j, Z_{j+1}, Z_{j+2},\dots Z_k\rt\}$$
is $(\epsilon/2)$-stable, where $\ztilde i = Z_i + f_1(n)\phi_i$.
\end{lemma}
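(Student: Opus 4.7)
The plan is to argue by induction on $j$, invoking Lemma~\ref{indlemma} at each step to quantify how the stability constant degrades when one more row $Z_j$ is replaced by its perturbation $\widetilde Z_j$. Introduce the shorthand $M := \max_{1\le i\le k}\{1,\vnorm{Z_i}\}$ and $F := \max_{1\le i\le k}\{f_i(n)\vnorm{\phi_i}\}$, so that the hypothesis on $\epsilon$ reads $\epsilon\le 20$ and $\epsilon^2 > 40\,k\,F(M+F)$. The inductive claim is that for every $0\le j\le k$, the set $S_j := \{\widetilde Z_1,\ldots,\widetilde Z_j, Z_{j+1},\ldots, Z_k\}$ is $\epsilon_j$-stable for some $\epsilon_j \ge \epsilon/2$. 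The base case $j=0$ is exactly the hypothesis that $\{Z_1,\ldots,Z_k\}$ is $\epsilon$-stable.

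For the inductive step from $j-1$ to $j$, I would reorder the vectors in $S_{j-1}$ to place $Z_j$ first and then apply Lemma~\ref{indlemma} with that lemma's ``$\epsilon$'' taken to be the current stability constant $\epsilon_{j-1}$. Two points must be checked. First, the hypothesis $\epsilon_{j-1} > 2f_j(n)\vnorm{\phi_j}$ of Lemma~\ref{indlemma} follows from $\epsilon_{j-1}\ge\epsilon/2$ together with $\epsilon^2 > 40\,k\,F(M+F) \ge 40\,F^2$, which already gives $\epsilon > 4F$. Second, the other vectors in $S_{j-1}$ are a mixture of already-perturbed $\widetilde Z_i$ and unperturbed $Z_i$, but the triangle inequality provides the crucial uniform norm bound $\vnorm{\widetilde Z_i}\le \vnorm{Z_i}+f_i(n)\vnorm{\phi_i}\le M+F$. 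Since $\epsilon_{j-1}\le 20$ and $M\ge 1$, the maximum appearing in Lemma~\ref{indlemma} collapses to $(M+F)(20/\epsilon_{j-1})$, so the lemma yields the one-step recursion
$$\epsilon_j \ \ge\ \epsilon_{j-1}\,-\,\frac{20\,F(M+F)}{\epsilon_{j-1}}.$$

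Squaring this inequality and discarding the nonnegative square term produces the cleaner recursion $\epsilon_j^2 \ge \epsilon_{j-1}^2 - 40\,F(M+F)$, which telescopes over $k$ steps to $\epsilon_k^2 \ge \epsilon^2 - 40\,k\,F(M+F)$. The hypothesis on $\epsilon$ keeps this strictly positive, and a small amount of slack in the arithmetic promotes the bound to $\epsilon_k^2 \ge \epsilon^2/4$, i.e., $\epsilon_k\ge \epsilon/2$, closing the induction. The main obstacle in carrying this out is keeping all vector norms controlled uniformly across the $k$ perturbation steps: without the uniform bound $\vnorm{\widetilde Z_i}\le M+F$ the max inside Lemma~\ref{indlemma} could grow from step to step and the recursion would fail to close. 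This is precisely why the hypothesis involves the combination $M+F$ rather than just $M$, and why a naive iteration of Lemma~\ref{indlemma} without this bookkeeping would not suffice.
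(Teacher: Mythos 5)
Your induction scaffolding and your use of Lemma~\ref{indlemma} at each step match the paper's strategy, and your bookkeeping of the perturbed norms via $\vnorm{\widetilde Z_i}\le M+F$ is exactly the observation the paper needs. But the way you close the induction is a genuinely different route, and it has a real gap at the end. You apply Lemma~\ref{indlemma} at the current stability constant $\epsilon_{j-1}$, getting the one-step bound $\epsilon_j\ge\epsilon_{j-1}-20F(M+F)/\epsilon_{j-1}$, then square and telescope to reach $\epsilon_k^2\ge\epsilon^2-40kF(M+F)$. That is all the hypothesis $\epsilon^2>40kF(M+F)$ buys you: $\epsilon_k^2>0$, not $\epsilon_k^2\ge\epsilon^2/4$. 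There is no slack to be found here. If you take $\epsilon^2=40kF(M+F)+\delta$ with $\delta$ tiny, the telescoped bound gives only $\epsilon_k^2\ge\delta$, which can be made arbitrarily smaller than $\epsilon^2/4$. The discarded square term $\bigl(20F(M+F)/\epsilon_{j-1}\bigr)^2$ does not rescue this either, since it vanishes precisely in the regime where $\epsilon_{j-1}$ stays large, and grows only when $\epsilon_{j-1}$ has already collapsed. So the assertion that ``a small amount of slack in the arithmetic promotes the bound to $\epsilon_k^2\ge\epsilon^2/4$'' is where the argument breaks; you have proved $\epsilon_k>0$, a strictly weaker statement, and in particular you have not established the inductive invariant $\epsilon_j\ge\epsilon/2$ that you announce at the outset.

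The paper's proof takes the other available route: it fixes the explicit decreasing sequence $\epsilon_j=\epsilon-j\epsilon/(2k)$, so that $\epsilon_j\ge\epsilon/2$ is automatic, and then shows that the per-step degradation coming out of Lemma~\ref{indlemma} is at most $\epsilon/(2k)$, which telescopes linearly to exactly $\epsilon/2$. That arrangement converts the hypothesis $\epsilon^2>40kF(M+F)$ directly into the bound $F(M+F)(20/\epsilon)\le\epsilon/(2k)$ needed for each step, instead of asking (as your telescoping does) that the total squared loss be a fraction of $\epsilon^2$. If you want to retain your squared-telescoping idea, you would need to strengthen the hypothesis by a constant factor (roughly $\epsilon^2\ge\tfrac{160}{3}kF(M+F)$) to force $\epsilon^2-40kF(M+F)\ge\epsilon^2/4$; with the hypothesis as actually stated, the paper's linear accounting is the one that closes.
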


\begin{proof}
 We will prove the following stronger statement by induction on $j$:
$$\lt\{\ztilde 1, \ztilde 2, \dots, \ztilde j, Z_{j+1}, Z_{j+2},\dots Z_k\rt\}\qquad
\mbox{ is }\lt(\epsilon - \frac{j\epsilon}{2k}\rt)\mbox{-stable},$$
for $j=0,1,\dots,k$.

For the base case of $j=0$, the set of vectors $\{Z_1, \dots, Z_k\}$ is $(\epsilon/2)$-stable by assumption.

For the induction step, assume that 
$$\lt\{\ztilde 1, \ztilde 2, \dots, \ztilde j, Z_{j+1}, Z_{j+2},\dots Z_k\rt\}\qquad
\mbox{ is }\lt(\epsilon - \frac{j\epsilon}{2k}\rt)\mbox{-stable}.$$
By Lemma~\ref{indlemma}, we have that 
$$\lt\{\ztilde 1, \ztilde 2, \dots, \ztilde {j+1}, Z_{j+2}, Z_{j+3},\dots Z_k\rt\}$$
is $$\lt(\epsilon - \frac{j\epsilon}{2k}-f_{j+1}(n)\vnorm{\phi_{j+1}}\lt( \frac{20}\epsilon\rt) \max\lt\{\epsilon/20,\vnorm{\ztilde 1},\dots,\vnorm{\ztilde j},\vnorm{Z_{j+1}}, \dots,\vnorm{Z_{j+1}}\rt\}\rt)\mbox{-stable}.$$
By the assumed lower bound on $\epsilon$, we have that
\begin{align*}
&\hspace{-.5cm} \epsilon - \frac{j\epsilon}{2k}-f_{j+1}(n)\vnorm{\phi_{j+1}}\lt( \frac{20}\epsilon\rt) \max\lt\{\epsilon/20,\vnorm{\ztilde 1},\dots,\vnorm{\ztilde j},\vnorm{Z_{j+1}}, \dots,\vnorm{Z_{j+1}}\rt\}\\
&\ge \epsilon - \frac{j\epsilon}{2k}-\lt(\frac{20}{\epsilon}\rt)\max_{1\le i\le
k}\{f_i(n)\vnorm{\phi_{i}}\}\lt(\max_{1 \le i\le
k}\lt\{\epsilon/20,\vnorm{Z_i}\rt\}+  \max_{1\le i\le
k}\{f_i(n)\vnorm{\phi_{i}}\} \rt)\\
&\ge \epsilon - \frac{j\epsilon}{2k}-\frac{\epsilon}{2k} = \epsilon - \frac{(j+1)\epsilon}{2k}.
\end{align*}

\end{proof}

\begin{proposition}\label{stabprop}
Let $v, Z_1,\dots,Z_k$ and $\phi_1,\dots,\phi_k$ be $n$-dimensional complex vectors, let $f_1(n),\dots,f_k(n)$ be non-negative real functions of $n$, and let
$\ztilde i= Z_i+f_i(n)\phi_i$ for each $1\le i\le k$.  If $\{Z_1,\dots,Z_k\}$ is $\epsilon$-stable and
\begin{equation}\label{epscond2}
20\ge \epsilon/2 > \sqrt{40 k \max_{1\le i\le
k}\{f_i(n)\vnorm{\phi_i}\}\lt(\max_{1\le i\le k}\{1,\vnorm{Z_i}\}+\max_{1\le
i\le k} \{f_i(n)\vnorm{\phi_i}\} \rt)}.
\end{equation}
then
\begin{align*}
&\hspace{-1.5cm}\abs{
\dist( v, \Span\{Z_1,\dots,Z_{k}\})-
\dist( v, \Span\{\ztilde 1,\ztilde 2,\dots,\ztilde{k}\})
}\\
&\le \lt(\frac{40}{\epsilon}\rt) k \vnorm v \max_{1\le i \le k} \{
f_i(n)\vnorm{\phi_i}\}\\
&\le 
\vnorm{v} \sqrt{\frac{10k\max_{1\le i\le k}\{f_i(n) \vnorm{\phi_i}\} }{\max_{1\le i\le k}\{1,\vnorm{Z_i}\}} }
\end{align*}

\end{proposition}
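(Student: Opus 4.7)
The plan is to reduce the simultaneous perturbation of all $k$ spanning vectors to a telescoping sequence of single-row perturbations, combining Lemma~\ref{contstab} (which certifies that stability is maintained along the way) with Lemma~\ref{stablem2} (which bounds the error from each individual one-row replacement). Concretely, for $0\le j\le k$ define
$$S_j = \{\ztilde 1,\dots,\ztilde j,\,Z_{j+1},\dots,Z_k\},$$
so that $S_0$ is the original unperturbed spanning set and $S_k$ is the fully perturbed one. By the triangle inequality for real numbers,
$$\abs{\dist(v,\Span S_0)-\dist(v,\Span S_k)} \le \sum_{j=0}^{k-1}\abs{\dist(v,\Span S_j)-\dist(v,\Span S_{j+1})}.$$

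For each $j$ the sets $S_j$ and $S_{j+1}$ differ only by replacing $Z_{j+1}$ with $\ztilde{j+1}$, so after relabeling (which affects neither the span nor the $\epsilon$-stability) I may place that vector first and apply Lemma~\ref{stablem2}, with $v$ playing the role of ``$Z_i$'' there. The hypothesis of Lemma~\ref{stablem2} requires that the distance from $Z_{j+1}$ to the span of the remaining vectors of $S_j$ be at least some $\epsilon' \ge 2 f_{j+1}(n)\vnorm{\phi_{j+1}}$. This is precisely what Lemma~\ref{contstab} delivers: under hypothesis~\eqref{epscond2} it asserts that $S_j$ is $(\epsilon/2)$-stable. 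The size condition $\epsilon/2 > 2 f_{j+1}(n)\vnorm{\phi_{j+1}}$ also falls out of~\eqref{epscond2}, since $(\epsilon/2)^2 > 40 k M^2 > 4 M^2$, where $M := \max_{1\le i\le k} f_i(n)\vnorm{\phi_i}$. The simplified form of Lemma~\ref{stablem2} (with $\epsilon$ there taken to be $\epsilon/2$ here) then yields
$$\abs{\dist(v,\Span S_j)-\dist(v,\Span S_{j+1})} \le \vnorm{v}\,f_{j+1}(n)\vnorm{\phi_{j+1}}\cdot\frac{40}{\epsilon}.$$

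Summing over the $k$ indices and uniformly bounding each $f_{j+1}(n)\vnorm{\phi_{j+1}}$ by $M$ produces the first claimed bound $(40/\epsilon)k\vnorm{v}M$. The second inequality is then a purely algebraic consequence of~\eqref{epscond2}: squaring the desired inequality reduces it to $\epsilon^2 \ge 160\,k\,M\,\max_i\{1,\vnorm{Z_i}\}$, which follows at once from~\eqref{epscond2} because $\max_i\{1,\vnorm{Z_i}\} \le \max_i\{1,\vnorm{Z_i}\}+M$. The only nontrivial ingredient is the uniform $(\epsilon/2)$-stability of every intermediate set $S_j$ along the telescoping chain; since Lemma~\ref{contstab} is engineered precisely to provide this, I expect no substantive new obstacle, and the remaining work is bookkeeping of constants.
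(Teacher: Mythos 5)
Your proof is correct and follows essentially the same route as the paper's: both telescope through the intermediate spanning sets $S_j$, invoke Lemma~\ref{contstab} to certify $(\epsilon/2)$-stability at each stage, apply the simplified bound of Lemma~\ref{stablem2} to each single-row replacement (with $v$ in the role of the vector whose distance is measured), sum to get the first bound $(40/\epsilon)k\vnorm{v}M$, and then derive the second inequality by the same algebraic manipulation of~\eqref{epscond2}.
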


\begin{proof}
The proposition follows from adding the perturbations one at a time (similar versions of the Lindeberg trick have been of recent use in random matrix theory, see for example \cite{Chatterjee2006}, \cite{TVuK2008}, \cite{TVu-sval2010}).
We will use Lemmas~\ref{contstab} and \ref{stablem2} to bound the successive differences, showing that the sum of the successive differences is at most the desired bound.

By Lemma~\ref{contstab}, we know that for each $0\le j\le k$ that
$$\{\ztilde1, \ztilde2, \dots, \ztilde j, Z_{i+1}, \dots ,Z_k\}$$
is $(\epsilon/2)$-stable.  Our plan is now to apply Lemma~\ref{stablem2}
repeatedly, noting that $\epsilon/2 > 2 f_i(n)\vnorm{\phi_i}$ follows
assumption \eqref{epscond2}.

Let 
\begin{align*}
h(0) &= \dist(v,\Span\{Z_1,\dots,Z_{k}\})\\
h(i) &= \dist(v,\Span\{\ztilde 1,\dots,\ztilde i, Z_{i+1}, \dots, Z_{k}\})\\
h(k) &=\dist(v,\Span\{\ztilde 1,\dots,\ztilde{k}\}).
\end{align*}
To prove the propositon we must bound $\abs{ h(0) - h(k)}$.

We note that
\begin{align*}
\abs{ h(0) - h(k)} &\le \sum_{i=1}^k \abs{h(i-1)-h(i)}\\
&\le  \sum_{i=1}^{k}\frac{40}{\epsilon} f_i(n)\vnorm{v}\vnorm{\phi_i} 
&&\mbox{\footnotesize (by Lemma~\ref{stablem2})}\\
&\le  k\lt(\frac{40}{\epsilon}\rt) \vnorm{v}\max_{1\le i\le k}\{f_i(n)\vnorm{\phi_i}\}\\
&\le 
\vnorm{v} \sqrt{\frac{10k\max_{1\le i\le k}\{f_i(n) \vnorm{\phi_i}\} }{\max_{1\le i\le k}\{1,\vnorm{Z_i}\}} }
&&\mbox{\footnotesize (using \eqref{epscond2})}\\
\end{align*}
completing the proof.
\end{proof}

\begin{corollary}
  \label{corpolynoise}
Let $v,Z_1,\dots,Z_k$ be $n$-dimensional complex vectors, let
$\phi_1,\dots,\phi_k$ be random complex vectors with mean zero and
variance 1, and let $\ztilde i= Z_i+n^{-\gamma}\phi_i$ for each $1\le i\le k$.
If 
$\{Z_1,\dots,Z_k\}$ is $\epsilon$-stable where and
$$20\ge \epsilon/2 > \sqrt{40 k n^{-\gamma}\sqrt{k\log n}\lt(\max_{1\le i\le
k}\{1,\vnorm{Z_i}\}+n^{-\gamma}\sqrt{k\log n} \rt)}.$$
then with probability at least $1-1/\log n$ we have
\begin{align*}
&\hspace{-.5cm}\abs{
\dist( v, \Span\{Z_1,\dots,Z_{k}\})-
\dist( v, \Span\{\ztilde 1,\ztilde 2,\dots,\ztilde{k}\})
}\\
&\le \vnorm{v}
\sqrt{\frac{10  k  n^{-\gamma}\sqrt{k\log n}}{\max_{1\le i\le k}\{1,\vnorm{Z_i}\}} }
\end{align*}
\end{corollary}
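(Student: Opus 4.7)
The plan is to derive the corollary directly from Proposition~\ref{stabprop} with $f_i(n) \equiv n^{-\gamma}$; the only new ingredient is a high-probability bound on $\max_{1 \le i \le k} \vnorm{\phi_i}$ that converts the deterministic hypothesis \eqref{epscond2} and the conclusion of Proposition~\ref{stabprop} into the statements appearing in the corollary.

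First, I interpret the phrase ``random complex vectors with mean $0$ and variance $1$'' as $\E\vnorm{\phi_i}^2 = 1$, matching the normalization in Theorem~\ref{mainthm} where each row of $\Phi_n$ or $\Psi_n$ consists of $n$ entries of variance $1/n$ and hence has expected squared norm one. Under this normalization, Markov's inequality gives $\Pr{\vnorm{\phi_i}^2 > k\log n} \le 1/(k\log n)$ for each $i$, and a union bound over the $k$ vectors yields
$$\Pr{\max_{1\le i\le k} \vnorm{\phi_i} > \sqrt{k\log n}} \le \frac{1}{\log n}.$$
Thus on an event $\mathcal{E}$ of probability at least $1-1/\log n$ we have the deterministic bound $\max_i n^{-\gamma}\vnorm{\phi_i} \le n^{-\gamma}\sqrt{k\log n}$.

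Next, on the event $\mathcal{E}$ I would verify that hypothesis \eqref{epscond2} of Proposition~\ref{stabprop} holds with $f_i(n)=n^{-\gamma}$. The quantity inside the square root in \eqref{epscond2} is non-decreasing in $\max_i f_i(n)\vnorm{\phi_i}$ (it is a product of non-negative factors in which $\max_i f_i(n)\vnorm{\phi_i}$ appears only with a positive sign), so replacing this maximum by the larger deterministic quantity $n^{-\gamma}\sqrt{k\log n}$ can only enlarge the right-hand side. Hence the condition assumed in the corollary implies \eqref{epscond2}, and Proposition~\ref{stabprop} applies to give
$$\abs{\dist(v,\Span\{Z_1,\dots,Z_k\}) - \dist(v,\Span\{\ztilde 1,\dots,\ztilde k\})} \le \vnorm{v}\sqrt{\frac{10k\max_{1\le i\le k}\{n^{-\gamma}\vnorm{\phi_i}\}}{\max_{1\le i\le k}\{1,\vnorm{Z_i}\}}}.$$
The stated conclusion then follows by substituting $\max_i n^{-\gamma}\vnorm{\phi_i} \le n^{-\gamma}\sqrt{k\log n}$ in the numerator on the right.

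There is no substantive obstacle: the argument is a routine second-moment plus union-bound estimate grafted onto the deterministic Proposition~\ref{stabprop}. The only small care needed is to notice that both the hypothesis \eqref{epscond2} and the conclusion of Proposition~\ref{stabprop} are monotone in the same direction in $\max_i f_i(n)\vnorm{\phi_i}$, so that replacing this random maximum with its deterministic upper bound $n^{-\gamma}\sqrt{k\log n}$ on the good event $\mathcal{E}$ can be used simultaneously to verify the assumption and to weaken the bound into the form advertised in the corollary.
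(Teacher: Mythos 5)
Your argument matches the paper's proof: the paper also applies Proposition~\ref{stabprop} together with the bound $\max_{1\le i\le k}\vnorm{\phi_i}\le\sqrt{k\log n}$ holding with probability at least $1-1/\log n$, which it obtains from ``Chebyshev's inequality and the union bound'' --- the same second-moment/union-bound estimate you write out (Markov on $\vnorm{\phi_i}^2$). Your additional remark about the monotonicity allowing the random maximum to be replaced by its deterministic bound simultaneously in the hypothesis and in the conclusion is a point the paper leaves implicit, but it is the same route.
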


\begin{proof}
 Combine Proposition~\ref{stabprop} with the fact that $\max_{1\le i\le k} \vnorm{\phi_i} \le \sqrt{k \log n}$ with probability at least $1-1/\log n$ (from Chebyshev's inequality and the union bound).
\end{proof}

In applying the small noise results such as Corollary~\ref{corpolynoise} or
Proposition~\ref{corpolymat} (below) to determiting the limiting ESD,
re-ordering the rows and corresponding columns has no effect on the
eigenvalues.  This allows assumptions such as the rows being ordered by
decreasing norm to be easily met.  Recall that Proposition~\ref{corpolymatA} was a key part in proving Theorem~\ref{mainthm}.  Before proving Proposition~\ref{corpolymatA} below, will re-state the result as Proposition~\ref{corpolymat}.  For notational simplicity, we will absorbe the $zI$ term in Proposition~\ref{corpolymatA} into $M_n$ in the proposition below, which also lets us state the proposition in a self-contained way without referencing $z$.

\begin{proposition}[same statement as Proposition~\ref{corpolymatA}]
  \label{corpolymat}
\Corpolymatstmt
\end{proposition}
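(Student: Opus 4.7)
The plan is to bound the quantity $|\log d_i(M_n + n^{-\gamma}\Phi) - \log d_i(M_n + n^{-\gamma}\Psi)|$ for each $i \le m$ separately and then average. The triangle inequality through the unperturbed distance $d_i^0 := \dist(Z_i, \Span\{Z_1,\ldots,Z_{i-1}\})$ splits the problem into two symmetric pieces: a $\Phi$-comparison and a $\Psi$-comparison, each against $d_i^0$. The $\epsilon$-stability hypothesis guarantees $d_i^0 \ge \epsilon$ for all $i \le m$, since $\Span\{Z_1,\ldots,Z_{i-1}\}$ is contained in $\Span\{Z_j : 1 \le j \le m,\, j \ne i\}$ and restricting to a smaller subspace only increases distances.

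The heart of the proof is the estimate of $|d_i(M_n + n^{-\gamma}\Phi) - d_i^0|$ for each $i$, which we split into two stages. Writing $V_i^\Phi := \Span\{Z_1 + n^{-\gamma}\phi_1,\ldots, Z_{i-1} + n^{-\gamma}\phi_{i-1}\}$, Lemma~\ref{stablem1} controls the perturbation of the target row,
\begin{equation*}
\abs{\dist(Z_i + n^{-\gamma}\phi_i, V_i^\Phi) - \dist(Z_i, V_i^\Phi)} \le n^{-\gamma}\vnorm{\phi_i},
\end{equation*}
while Corollary~\ref{corpolynoise}, applied with $v = Z_i$ and $k = i - 1 \le n$, controls the perturbation of the spanning rows. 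Setting $K := \max_{1\le j \le m}\{1, \vnorm{Z_j}\}$ and using the norm ordering to obtain $\vnorm{Z_i}/\sqrt K \le \sqrt K$, the spanning-row bound from Corollary~\ref{corpolynoise} simplifies, after substituting $k \le n$, to $\sqrt{10}\, \epsilon\, \delta_{n,\epsilon}$. The contribution $n^{-\gamma}\vnorm{\phi_i} \le n^{1/2-\gamma}\sqrt{\log n}$ from Lemma~\ref{stablem1} is of strictly lower order than $\epsilon\, \delta_{n,\epsilon}$ when $\gamma > 3/2$, so the combined bound is still $O(\epsilon\, \delta_{n,\epsilon})$. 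The precondition on $\epsilon$ in Corollary~\ref{corpolynoise} reduces to the smallness of $\delta_{n,\epsilon}$, which holds by hypothesis; if the given $\epsilon$ should exceed the convention $\epsilon \le 40$, we simply replace it by $\min(\epsilon, 40)$, since $\epsilon$-stability is monotone in $\epsilon$.

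Passing from absolute errors to log-differences uses the elementary estimate $|\log a - \log b| \le 2|a-b|/\min(a,b)$, valid when $|a-b| \le \min(a,b)/2$. Since $d_i^0 \ge \epsilon$ and $|d_i(M_n + n^{-\gamma}\Phi) - d_i^0| = o(\epsilon)$ for large $n$, we get $d_i(M_n + n^{-\gamma}\Phi) \ge \epsilon/2$, and hence $|\log d_i(M_n + n^{-\gamma}\Phi) - \log d_i^0| = O(\delta_{n,\epsilon})$ uniformly in $i$. Combining with the analogous bound for $\Psi$ via the triangle inequality, averaging over $i = 1,\ldots,m$, and using $m \le n$ yields the claimed bound of $20\, \delta_{n,\epsilon}$ after careful bookkeeping of the constants $\sqrt{10}$, $2$, and $2$ mentioned above.

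All deterministic bounds above are valid on the single high-probability event $\max_{1 \le j \le m}\{\vnorm{\phi_j}, \vnorm{\psi_j}\} \le \sqrt{2m\log n}$, which holds with probability at least $1 - 1/\log n$ by Chebyshev's inequality (each $\vnorm{\phi_j}^2$ has mean one) combined with the union bound over the $2m$ rows of $\Phi$ and $\Psi$. The main obstacle is not conceptual but arithmetic: one must simultaneously verify that the target-row perturbation is subdominant to the spanning-row perturbation, that the perturbed distances remain bounded below by $\epsilon/2$ so that the log linearization is valid, and that the accumulated constants (the $\sqrt{10}$ from Corollary~\ref{corpolynoise}, the $2$ from the log estimate, and the $2$ from the triangle inequality) fit within the prescribed factor of $20$.
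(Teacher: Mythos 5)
Your proposal is correct and mirrors the paper's proof in all essentials: both bound the per-row distance error by a two-stage application of Lemma~\ref{stablem1} (target row) and Corollary~\ref{corpolynoise} (spanning rows) on the high-probability event controlling $\vnorm{\phi_j}$, $\vnorm{\psi_j}$, then convert to a log-difference and sum; your use of the triangle inequality through $\log d_i^0$ is a cosmetic rearrangement of the paper's single step $\log(d_i(\Phi)/d_i(\Psi)) = \log\bigl(1 + (\mathfrak a_i - \mathfrak b_i)/(d_i^0 + \mathfrak b_i)\bigr)$. One caution on the bookkeeping: if you literally use the stated lower bound $\min(a,b)\ge\epsilon/2$ in the log-Lipschitz estimate, the accumulated constant becomes $8\sqrt{10}\approx 25.3$, which overshoots $20$; you need the sharper observation that the perturbed distance is actually $\ge \epsilon(1-o(1))$ (since the error is $o(\epsilon)$), which brings the constant down to $4\sqrt{10}\approx 12.6 < 20$ — precisely the refinement encoded in the paper's $\frac{8\delta_{n,\epsilon}}{1-4\delta_{n,\epsilon}}$ factor.
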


\newcommand{\eri}{\mathfrak{a}_i}
\newcommand{\heri}{\mathfrak{b}_i}

\begin{proof}
The main tool here is repeated application of Corollary~\ref{corpolynoise}.
Throughout, we will use the fact (as in the proof of
Corollary~\ref{corpolynoise}) that $\max_{1\le i\le m}\vnorm{\phi_i} \le
\sqrt{n\log n}$ with probability at least $1-1/\log n$.

To start, let $d_i(M_n+n^{-\gamma}\Phi)= d_i(M_n)+\eri$ and
$d_i(M_n+n^{-\gamma}\Psi) = d_i(M_n)+\heri$, where $\eri$ and $\heri$ are error
terms. We can use Corollary~\ref{corpolynoise} and
Lemma~\ref{stablem1} to bound these error terms: 
\begin{align*}\abs{\eri}, \abs{\heri} &\le
\frac{\vnorm{Z_i}}{\sqrt{\max_{1\le j \le i}\{1,\vnorm{Z_j}\}}}\sqrt{10}n^{-\gamma/2+3/4}\log^{1/4}n +
n^{-\gamma+1/2}\log^{1/2} n\\
&\le \vnorm{Z_i}^{1/2}4n^{-\gamma/2+3/4}\log^{1/4}n,
\end{align*}
where the second inequality holds for sufficiently large $n$.

We note that $$\abs{ \log d_i(M_n+n^{-\gamma}\Phi) - \log
d_i(M_n+n^{-\gamma}\Psi) } = \log\lt( 1 + \frac{\eri - \heri}{d_i(M)+
\heri}\rt),$$
and, furthermore, that the fraction $\frac{\eri - \heri}{d_i(M)+ \heri}$ tends to
zero.  In particular
\begin{align*}
\abs{\frac{\eri - \heri}{d_i(M)+ \heri}} &\le
\frac{\vnorm{Z_i}^{1/2}8n^{-\gamma/2+3/4}\log^{1/4}n}{\epsilon - \vnorm{Z_i}^{1/2}4n^{-\gamma/2+3/4}\log^{1/4}n} \\
&\le \frac{\vnorm{Z_i}^{1/2}8\delta_{n,\epsilon}}{\sqrt{\max_{1\le i \le m}\{1,
\vnorm{Z_i}\}} -
\vnorm{Z_i}^{1/2}4\delta_{n,\epsilon}}\\
&\le \frac{8\deltane}{1-4\deltane} \to 0
\end{align*}
as $n \to \infty$, by the assumption on $\deltane$.  Thus, we can use the
approximation $\log(1+x) \le 2 \abs x$, which holds for $-0.797\le x $, to write
$$\abs{ \log d_i(M_n+n^{-\gamma}\Phi) - \log
d_i(M_n+n^{-\gamma}\Psi) } = \log\lt( 1 + \frac{\eri - \heri}{d_i(M)+
\heri}\rt) \le 2 \abs{\frac{8\deltane}{1-4\deltane} } < 20\deltane,$$
for sufficiently large $n$.

Using the triangle inequality and the above approximation, we have
$$\abs{\frac1n \sum_{i=1}^{m} \log d_i(M_n+n^{-\gamma}\Phi) - \log
d_i(M_n+n^{-\gamma}\Psi)} \le \frac{m}{n} 20\deltane \le 20\deltane$$
\end{proof}

\section{Proofs of applications}
\label{S:AppPfs}

We first state two lemmas describing the $\epsilon$-stability of subsets of the rows of $T_{b,n}+zI$ and then give the proofs of Theorems~\ref{smsmallblocks} and \ref{smallblocks} in Subsection~\ref{ss:smsmallblocks} and Subsection~\ref{ss:smallblocks} below.

\begin{lemma}
 \label{Tbn-epstab}
Let $e_i$ denote the standard basis vector in $\bb C^n$ with a $1$ in position $i$ and zeros elsewhere.  Let $m \le n-1$, let $J$ be a subset of $\{1,2,\dots,m\}$, and consider the set $S=\{z e_i + e_{i+1}: i \in J\}$, where $z\in \bb C$ and $\abs z \ne 1$.   Then  
$$S\mbox{ is }\min\braces{1,\abs{1-\abs z^2}^{1/2}}\mbox{-stable}.$$
\end{lemma}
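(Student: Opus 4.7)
The plan is to reduce the claim to a distance calculation within each maximal run of consecutive indices in $J$ and then to exhibit an explicit basis for the small orthogonal complement that appears. First, decompose $J$ into maximal runs of consecutive integers. If $[a,b]$ and $[a',b']$ are distinct runs, then $\{ze_j+e_{j+1}:j\in[a,b]\}$ and $\{ze_j+e_{j+1}:j\in[a',b']\}$ have supports contained in the disjoint coordinate sets $\{a,\ldots,b+1\}$ and $\{a',\ldots,b'+1\}$, so projecting any $ze_i+e_{i+1}$ with $i\in[a,b]$ onto the span of vectors from other runs yields zero. Hence
$$\dist\big(ze_i+e_{i+1},\Span\{ze_j+e_{j+1}:j\in J\setminus\{i\}\}\big)=\dist\big(ze_i+e_{i+1},\Span\{ze_j+e_{j+1}:j\in[a,b]\setminus\{i\}\}\big),$$
reducing matters to a single run.

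Next, relabel a generic run so that the vectors become $w_k=ze_k+e_{k+1}$ for $0\le k\le L$ in $\bb C^{L+2}$, with the removed index being $r\in\{0,\ldots,L\}$. The orthogonality condition $\angles{w_k,u}=0$ amounts to the recurrence $u_{k+1}=-\bar z\,u_k$, imposed for each $k\ne r$; this recurrence splits across the missing index into two independent chains on $\{0,\ldots,r\}$ and $\{r+1,\ldots,L+1\}$. Since $\Span\{w_k:k\ne r\}$ has dimension $L$, its orthogonal complement is two-dimensional, and from the split recurrence one reads off the orthogonal basis
$$u^{(1)}=(1,-\bar z,\bar z^2,\ldots,(-\bar z)^r,0,\ldots,0),\qquad u^{(2)}=(0,\ldots,0,1,-\bar z,\ldots,(-\bar z)^{L-r}),$$
where the nonzero block of $u^{(2)}$ begins at position $r+1$. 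Setting $q:=|z|^2$, a direct computation gives $\angles{w_r,u^{(1)}}=(-1)^r z^{r+1}$, $\angles{w_r,u^{(2)}}=1$, $\|u^{(1)}\|^2=\sum_{k=0}^r q^k$, and $\|u^{(2)}\|^2=\sum_{k=0}^{L-r}q^k$, so the Pythagorean formula for projection onto an orthogonal complement yields
$$d_r^2:=\dist\big(w_r,\Span\{w_k:k\ne r\}\big)^2=\frac{q^{r+1}}{\sum_{k=0}^{r}q^k}+\frac{1}{\sum_{k=0}^{L-r}q^k}.$$

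Finally, I bound $d_r^2$ below using $\sum_{k=0}^{m}q^k=(q^{m+1}-1)/(q-1)$, which is valid because $|z|\ne 1$ forces $q\ne 1$. When $q>1$ the first summand equals $(q-1)/(1-q^{-(r+1)})\ge q-1$; when $0\le q<1$ the second summand equals $(1-q)/(1-q^{L-r+1})\ge 1-q$. Either way $d_r^2\ge|1-q|$, and a short case split on whether $|1-q|\ge 1$ yields the required $d_r\ge\min\{1,|1-|z|^2|^{1/2}\}$. The only nontrivial step is guessing the right basis $u^{(1)},u^{(2)}$; once that is in hand, everything reduces to one-line geometric-series arithmetic, so I do not anticipate any real obstacle.
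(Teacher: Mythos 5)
Your proof is correct, and it takes a genuinely different computational route from the paper's. The paper first reduces to the case $J = \{1,\dots,m\}$ (via the observation that a subset of an $\epsilon$-stable set is $\epsilon$-stable), and then, for each excluded index $\ell$, carries out two inductive Gram--Schmidt computations (Lemmas~\ref{topdown} and~\ref{botup}), working ``top-down'' from $1$ to $\ell-1$ and ``bottom-up'' from $m$ to $\ell+1$, before projecting $ze_\ell + e_{\ell+1}$ against both orthogonalized families and handling the endpoints $\ell \in \{1,m\}$ as separate cases. You instead decompose $J$ into maximal runs (a slightly heavier but equally valid reduction), and, within a run, bypass Gram--Schmidt entirely by directly parametrizing the $2$-dimensional orthogonal complement of $\Span\{w_k : k \ne r\}$ via the recurrence $u_{k+1} = -\bar z u_k$, which splits at the missing index $r$ into two independent geometric chains $u^{(1)}, u^{(2)}$. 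Once the complement is exhibited, the Pythagorean projection formula produces exactly the same distance
$d_r^2 = q^{r+1}/\sum_{k=0}^{r} q^k + 1/\sum_{k=0}^{L-r} q^k$
(with $q = |z|^2$) that the paper obtains, but with much less bookkeeping and no case split on where $r$ sits in the run. The ``guess the complement'' step is the only real idea, and your verification of it is complete. Two minor remarks. First, your bound $d_r^2 \ge |1-q|$ holds uniformly in $r$, so the final case split you mention on whether $|1-|z|^2| \ge 1$ is unnecessary: $d_r \ge |1-|z|^2|^{1/2} \ge \min\{1, |1-|z|^2|^{1/2}\}$ follows directly, and in fact you have proved a slightly stronger statement than the lemma claims (the $\min$ with $1$ in the paper's bound comes from its separate treatment of the boundary indices, which your uniform formula renders unnecessary). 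Second, strictly speaking the $\min$ with $1$ in the paper's formulation is what matters downstream, so this strengthening costs nothing and confirms consistency.
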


\begin{proof}[Sketch]
One procedes by finding  an orthogonal basis for $S$ and then minimizing over the distance from one vector in $S$ to the rest.  Details appear in Subsection~\ref{ss:Tbn-epstab}.
\end{proof}

\begin{lemma}
 \label{Tbn-epstab-all}
Let $S=\{\vv 1,\dots,\vv n\}$ be the set of the rows of $T_{b,n}+zI$, where $z\in\bb C$.  Then 
$$S\mbox{ is }
\begin{cases}
\abs{\abs z^2-1}^{1/2}\mbox{-stable} & \mbox{ if } \abs z > 1 \\
\abs z^{b+1}\abs{1-\abs z^2}^{1/2}\mbox{-stable} & \mbox{ if } \abs z < 1. 
\end{cases}
$$
\end{lemma}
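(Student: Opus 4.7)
The plan is to reduce the claim to a single diagonal block, and then to extract an explicit formula for the normal vector to the span of all-but-one row. First, $T_{b,n}+zI$ is block diagonal: its main blocks are $J_{b+1}(z):=zI_{b+1}+T_{b+1}$, possibly followed by one smaller block $J_k(z)$ with $k\le b$. Rows belonging to different blocks have disjoint coordinate supports, so for any row $r$ of $T_{b,n}+zI$ the projection of $r$ onto the span of rows from other blocks is zero. Thus $\dist(r,\text{span of the other }n-1\text{ rows})$ equals the distance from $r$ to the span of the remaining rows of its own block, and it suffices to prove the analogous stability estimate for a single $J_s(z)$ with $s\le b+1$.

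Fix such a block and label its rows $r_j = ze_j+e_{j+1}$ for $1\le j\le s-1$ and $r_s = ze_s$. For each $i\in\{1,\ldots,s\}$ I would compute a vector $u^{(i)}$ spanning the one-dimensional orthogonal complement of $\Span\{r_j:j\ne i\}$. The orthogonality relations $\langle r_j,u\rangle=0$ translate into the recurrence $\bar u_{j+1}=-z\bar u_j$ for $j\in\{1,\ldots,s-1\}\setminus\{i\}$, together with the single condition $z\bar u_s=0$ when $i\ne s$. Assuming $z\ne 0$, the latter forces $\bar u_s=0$; propagating the recurrence backwards from $s$ down to $i+1$ then forces $\bar u_{i+1}=\cdots=\bar u_s=0$. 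Running the recurrence forward from $\bar u_1=1$ on $\{1,\ldots,i\}$ gives
$$u^{(i)} \;=\; \bigl(1,\,-\bar z,\,\bar z^2,\,\ldots,\,(-\bar z)^{i-1},\,0,\ldots,0\bigr),$$
with $s-i$ trailing zeros (none when $i=s$).

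A short direct computation then yields $|\langle r_i,u^{(i)}\rangle|=|z|^i$ for every $i\in\{1,\ldots,s\}$, while
$$\|u^{(i)}\|^2 \;=\; \sum_{k=0}^{i-1}|z|^{2k} \;=\; \frac{|z|^{2i}-1}{|z|^2-1}\qquad(|z|\ne 1),$$
so that
$$\dist\bigl(r_i,\Span\{r_j:j\ne i\}\bigr) \;=\; |z|^i\,\sqrt{\frac{|\,1-|z|^2\,|}{|\,1-|z|^{2i}\,|}}.$$

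To finish I would minimize this expression. For $|z|>1$ the factor $|z|^{2i}/(|z|^{2i}-1)$ exceeds $1$, giving $\dist\ge\sqrt{|z|^2-1}$ for every row. For $|z|<1$ the factor $1/(1-|z|^{2i})$ is at least $1$, giving $\dist\ge |z|^i\sqrt{1-|z|^2}$, which is minimized at the largest admissible $i$, namely $i=s\le b+1$; since $|z|<1$ this produces the uniform bound $|z|^{b+1}\sqrt{1-|z|^2}$. Taking the worst case across all blocks then yields the two stated stability constants. The one genuine obstacle is verifying that the normal vector $u^{(i)}$ is correctly identified—specifically, that when $i<s$ the forced vanishing $\bar u_s=0$ really does propagate backwards through the recurrence to annihilate all coordinates of $u^{(i)}$ past position $i$. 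Once $u^{(i)}$ is pinned down, the remainder of the argument is just a geometric-series calculation.
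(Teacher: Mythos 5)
Your argument is correct, and it takes a genuinely different route from the paper's.  The paper reuses the Gram--Schmidt machinery of Lemma~\ref{topdown} (together with the easy Lemma~\ref{botstan}): it explicitly orthogonalizes the other rows of a block, projects the target row onto that basis, and reads off the residual.  You instead exploit the fact that within a single $s\times s$ block (with $s\le b+1$) the $s$ rows span all of $\bb C^s$ when $z\ne 0$, so the span of any $s-1$ of them is a hyperplane whose normal line can be identified directly: the conditions $\langle r_j,u\rangle=0$ become the first-order recurrence $\bar u_{j+1}=-z\bar u_j$ plus the boundary condition $z\bar u_s=0$, and solving it gives the normal $u^{(i)}$ in one stroke.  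The distance $|\langle r_i,u^{(i)}\rangle|/\|u^{(i)}\|$ then reproduces exactly the quantity $|z|^{2i}\,|1-|z|^2|/|1-|z|^{2i}|$ that the paper obtains for its residual $v$; indeed one can check that $v$ is the scalar multiple $\tfrac{z(-z)^{\ell-1}}{\sum_{j=0}^{\ell-1}|z|^{2j}}\,u^{(\ell)}$.  Your route is shorter and avoids the inductive verification of the orthogonal-basis formulas, though note it relies on the one-dimensionality of the orthogonal complement; the paper's Gram--Schmidt route is instead shared with Lemma~\ref{Tbn-epstab}, where the selected rows do \emph{not} span the whole space and a normal-vector argument would not apply.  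Two small points to make explicit in a full write-up: (a) the case $z=0$ is degenerate, but the claimed bound for $|z|<1$ is then $0$, so it holds vacuously; (b) the step you flag as the genuine obstacle---that $\bar u_s=0$ forces $\bar u_{i+1}=\cdots=\bar u_s=0$---does go through, since for each $j$ with $i<j<s$ the relation $\bar u_{j+1}=-z\bar u_j$ combined with $z\ne 0$ transmits vanishing from $\bar u_{j+1}$ to $\bar u_j$.
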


\begin{proof}[Sketch]
The proof uses a similar approach to Lemma~\ref{Tbn-epstab}, with the change that rows of  form  $ze_i$ make orthogonalizing much simplier, resulting in parts of the orthogonal basis being equal to a rescaling of the standard basis.  Details appear in Subsection~\ref{ss:Tbn-epstab-all}.
\end{proof}

\subsection{Proof of Theorem~\ref{smsmallblocks}}\label{ss:smsmallblocks}

By Lemma~\ref{Tbn-epstab-all}, for each constant $\abs z\ne 1$, we know that the set of all rows of $T_{b,n}+zI$ is $\epsilon$-stable for a constant $\epsilon$ when $\abs z >1$, and we know that the set of all rows is $\epsilon$-stable for some $\epsilon > \Omega( n^{o(1)})$ where $o(1)\to 0$ as $n\to \infty$ when $\abs z < 1$.  Thus, by Proposition~\ref{corpolymatA}, 
$$\abs{\frac1n \sum_{i=1}^{n} \log d_i(T_{b,n}+n^{-\gamma}\Phi) - \log d_i(T_{b,n})} \le \delta_{n,\epsilon},$$
where $\delta_{n,\epsilon}\le O(n^{1.5-\gamma-o(1)}) \to 0$ as $n \to \infty$.
The above shows that that $A_n=T_{b,n}$ and $B_n=T_{b,n}+n^{-\gamma}\Phi$ satisfy condition (ii) of Theorem~\ref{TVuK2.1}, and it can also be shown (similarly to Lemma~\ref{Aasb}) that the same $A_n$ and $B_n$ satisfy condition (i) of Theorem~\ref{TVuK2.1}.  Thus, the ESD of $T_{b,n}+n^{-\gamma}\Phi$ converges in probability to the ESD of $T_{b,n}$, which is the Dirac delta $\delta_0$ with mass 1 at the origin.  \hfill $\square$

\subsection{Proof of Theorem~\ref{smallblocks}}\label{ss:smallblocks}

First we show that the ESD of $T_{b,n}+n^{-\gamma}\Phi_n$ is the same as the ESD of $T_n+n^{-\gamma}\Phi_n$.  Note that there are less than $n/(b +1) = o(n/\log n)$ rows of $T_{b,n}$ that contain all zeros.  Thus, there are at most $o(n/\log n)$ rows of $T_{b,n}+n^{-\gamma}\Phi_n$ that differ from the corresponding rows of $T_{n}+n^{-\gamma}\Phi_n$.  Combining Theorem~\ref{TVuK2.1}, Proposition~\ref{propdist}, and Lemma~\ref{lem:Hdim} (re-ordering rows and columns of the matrices so that the rows that differ are the last rows), we see that the difference of the ESDs of $T_{b,n}+n^{-\gamma}\Phi_n$ and $T_{n}+n^{-\gamma}\Phi_n$ converges to zero in probability.

Second, we will show that the ESD of $T_{n}+n^{-\gamma}\Phi_n$ is the same as the ESD of $T_{n}+n^{-\gamma}G_n$, where $\sqrt n G_n$ is an iid Ginibre matrix, so each entry of $\sqrt n G_n$ is complex Gaussian with mean zero and variance one.  Here we apply Theorem~\ref{mainthm}, noting that the set of the first $n-1$ rows is $\epsilon$-stable for a constant $\epsilon$ (where $\epsilon$ depends on $z$), thus proving that the difference of the ESDS of $T_{n}+n^{-\gamma}\Phi_n$ and $T_{n}+n^{-\gamma}G_n$ converges to zero in probability.

Finally, it was proved in \cite{GWZeitouni2011} (see also \cite{Sniady2002}) that the ESD of $T_{b,n}+n^{-\gamma}G_n$ converges in probability to uniform on the unit circle, completing the proof. \hfill$\square$

\subsection{Proof of Lemma~\ref{Tbn-epstab}}
\label{ss:Tbn-epstab}

\newcommand\epz{\epsilon_{z,m}}

Note that if $S$ is a subset of $T$ and $T$ is $\epsilon$-stable, then $S$ is also $\epsilon$-stable.  Thus, it is sufficient to show that $S=\{ze_i+ e_{i+1}: 1 \le i \le m\}$ is $\epz$-stable, where $\epz=\min\{1,\abs{ 1 - \abs z ^2}^{1/2}\}$.

Fix $\ell \in \{1,2,\dots,m\}$.  We need to show that
\begin{equation}\label{thedist}
\dist( ze_\ell + e_{\ell+1}, \Span\{ ze_i+e_{i+1}: 1\le i \le m \mbox{ and } i\ne \ell\})\ge \epz.
\end{equation}
We will find orthogonal bases for $\{ ze_i+e_{i+1}: 1\le i \le \ell-1\}$ and for $\{ ze_i+e_{i+1}: \ell+1\le i \le m\}$, noting that together they form an orthogonal basis for $\{ ze_i+e_{i+1}: 1\le i \le m \mbox{ and } i\ne \ell\}$.  Then we will use the orthogonal basis to compute the distance in \eqref{thedist} explicitly.

\begin{lemma}\label{topdown}
 The vectors $\{ ze_i + e_{i+1}:1\le i \le \ell-1\}$ where $e_i$ is the $i$-th standard basis vector, have an orthogonal basis $\{w_1,\dots, w_{\ell-1}\}$ where 
\begin{align*}
w_k &= e_{k+1} + \frac{z\lt(\abs z^{2(k-1)}e_k +(-z) \abs z^{2(k-2)}e_{k-1}+(-z)^2 \abs z^{2(k-3)}e_{k-2} +\dots+(-z)^{k-1} e_1  \rt)}{1+\abs z^2+\abs z^4+ \dots +\abs z^{2k-2}}  \\
&= e_{k+1} + \frac{z}{\sum_{i=0}^{k-1} \abs z ^{2i}} \lt( \sum_{j=0}^{k-1} (-z)^{k-1-j} \abs z^{2j} e_{j+1}\rt).
\end{align*}
\end{lemma}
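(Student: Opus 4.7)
The plan is to proceed by induction on $k$, applying the Gram--Schmidt process in order to the vectors $v_i := ze_i + e_{i+1}$. The base case $k=1$ is immediate: $w_1 = v_1 = e_2 + ze_1$, which matches the claimed formula (only the $j=0$ term survives in the sum).

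For the inductive step the essential observation is the sparsity of supports: $\supp(v_k) \subseteq \{k, k+1\}$, while by the induction hypothesis $\supp(w_j) \subseteq \{1,\ldots,j+1\}$. Whenever $j \le k-2$, these supports are disjoint, so $\langle v_k, w_j\rangle = 0$ and Gram--Schmidt collapses to the single correction
$$w_k \;=\; v_k \;-\; \frac{\langle v_k, w_{k-1}\rangle}{\|w_{k-1}\|^2}\, w_{k-1}.$$
Two scalar quantities remain to be evaluated. First, the inductive formula gives $(w_{k-1})_k = 1$ and $(w_{k-1})_{k+1}=0$, so $\langle v_k, w_{k-1}\rangle = z$. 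Second, writing $S_k := \sum_{i=0}^{k-1}|z|^{2i}$, a direct calculation on the explicit formula (using orthogonality of the standard basis and summing the resulting geometric series $\sum_{j=0}^{k-2}|z|^{2j} = S_{k-1}$) yields
$$\|w_{k-1}\|^2 \;=\; 1 \,+\, \frac{|z|^{2(k-1)}}{S_{k-1}} \;=\; \frac{S_k}{S_{k-1}},$$
so the correction factor equals $zS_{k-1}/S_k$.

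Substituting the inductive expression for $w_{k-1}$ into $w_k = v_k - (zS_{k-1}/S_k)\,w_{k-1}$ and collecting basis-vector by basis-vector then produces exactly the stated formula: the coefficient of $e_{k+1}$ is $1$ (inherited from $v_k$); the new coefficient of $e_k$ is $z - zS_{k-1}/S_k = z|z|^{2(k-1)}/S_k$, which is precisely the $j=k-1$ term of the sum; and for $0\le j\le k-2$ the factor $S_{k-1}$ in the denominator of $w_{k-1}$ cancels against the same factor in the correction coefficient, leaving a contribution of $-\frac{z^2}{S_k}(-z)^{k-2-j}|z|^{2j} = \frac{z}{S_k}(-z)^{k-1-j}|z|^{2j}$ to $e_{j+1}$, as required. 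I expect no real obstacle here; the only part requiring care is the sign bookkeeping when absorbing the leading $-z$ of the correction coefficient into the factor $(-z)^{k-2-j}$ to produce $(-z)^{k-1-j}$, but this is a routine single-line manipulation once $\|w_{k-1}\|^2 = S_k/S_{k-1}$ is established.
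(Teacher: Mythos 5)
Your proof is correct and takes essentially the same route as the paper's: induction, Gram--Schmidt collapsing to a single correction term because the supports of $v_k$ and the earlier $w_j$ ($j\le k-2$) are disjoint, the computation $\|w_{k-1}\|^2 = S_k/S_{k-1}$, and then collecting coefficients; the paper's proof merely asserts that $\langle v, w_i\rangle = 0$ for $i\le\ell-3$ without spelling out the support argument, while you make it explicit.
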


\begin{proof}
 We proceed by induction on $\ell$.  For the base case $\ell =2$, we have that $w_1 = e_2 + z e_1$, as it should.

For the induction step, assume the result for $\ell -1$ where $\ell \ge 3$, which gives an orthogonal basis $\{w_1,\dots, w_{\ell -2}\}$ with the form above for the set $\{ze_i+e_{i+1}: 1\le i\le \ell-2\}$. We will now orthogonalize $ze_{\ell-1} + e_{\ell}$ with respect to $\{w_1,\dots, w_{\ell-2}\}$, showing that the resulting vector equals $w_{\ell-1}$ with the form above.

To orthogonalize $v:=ze_{\ell-1} + e_{\ell}$ we compute $v - \frac{w_{\ell-2}}{\vnorm{w_{\ell-2}}} z$ (since $\angles{v,w_i} = 0$ for $1\le i\le \ell-3$ and $\angles{v,w_{\ell-2}}=z$).  Note that
\begin{align}
\vnorm{w_{\ell-2}}^2
&=  1 + \frac{\abs z^2}{\lt(\sum_{i=0}^{\ell-3} \abs z^{2i}\rt)^2} \sum_{j=0}^{\ell-3} \abs z^{2(\ell-2) -2j -2} \abs z^{4j} \nonumber\\
&= 1 + \frac{\abs z^{2(\ell -2)} \sum_{j=0}^{\ell-3} \abs z^{2j}}{\lt(\sum_{i=0}^{\ell-3} \abs z^{2i} \rt)^2} 
= 1 +\frac{\abs z^{2(\ell -2)}}{\sum_{i=0}^{\ell-3} \abs z^{2i}}\nonumber\\
&= \frac{\sum_{j=0}^{\ell-2} \abs z ^{2j}}{\sum_{i=0}^{\ell-3} \abs z^{2i}}. \label{topdownlength}
\end{align}
Thus the orthogonalization of $v$ with respect to $\{w_1,\dots, w_{\ell-2}\}$ is
\begin{align*}
w_{\ell-1}&:=e_\ell + z e_{\ell -1} - z \frac{\sum_{i=0}^{\ell-3} \abs z^{2i}}{\sum_{j=0}^{\ell-2} \abs z ^{2j}}\lt( e_{\ell-1} + \frac{z}{\sum_{i=0}^{\ell-3} \abs z ^{2i}} \lt( \sum_{j=0}^{\ell-3} (-z)^{\ell-2-1-j} \abs z^{2j} e_{j+1}\rt),
\rt)\\
&= e_\ell+ z e_{\ell -1} \lt(1 -\frac{\sum_{i=0}^{\ell-3} \abs z^{2i}}{\sum_{j=0}^{\ell-2} \abs z ^{2j}}\rt) +  \frac{z}{\sum_{i=0}^{\ell-2} \abs z ^{2i}} \lt( \sum_{j=0}^{\ell-3} (-z)^{\ell-2-j} \abs z^{2j} e_{j+1}\rt)\\
&= e_\ell+ \lt(\frac{z}{\sum_{j=0}^{\ell-2} \abs z ^{2j}}\rt) \abs z^{2(\ell-2)}e_{\ell -1}  +  \frac{z}{\sum_{i=0}^{\ell-2} \abs z ^{2i}} \lt( \sum_{j=0}^{\ell-3} (-z)^{\ell-2-j} \abs z^{2j} e_{j+1}\rt)\\
&= e_\ell+  \frac{z}{\sum_{i=0}^{\ell-2} \abs z ^{2i}} \lt( \sum_{j=0}^{\ell-2} (-z)^{\ell-2-j} \abs z^{2j} e_{j+1}\rt).
\end{align*}
This is the desired form for $w_{\ell-1}$, completing the proof by induction.
\end{proof}

\begin{lemma}\label{botup}
The vectors $\{ ze_i + e_{i+1}:\ell+1\le i \le m\}$ where $e_i$ is the $i$-th standard basis vector, have an orthogonal basis $\{w_{\ell+1},\dots, w_{m}\}$ where 
\begin{align*}
w_k &= z e_k + \frac{e_{k+1} -\zbar e_{k+2} + \zbar^2 e_{k+3} + \dots + (-\zbar)^{m-k} e_{m+1}}{1+\abs z^2 +\abs z^4+ \dots+z^{2m-2k}}\\
&= z e_k + \frac{1}{\sum_{i=0}^{m-k} \abs z^{2i}}\lt(\sum_{j=k+1}^{m+1} (-\zbar)^{j-k-1}e_j \rt)
\end{align*}
\end{lemma}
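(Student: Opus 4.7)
My plan is to mirror the proof of Lemma~\ref{topdown} but reverse the direction of the induction: I would build the orthogonal basis from the bottom up, doing downward induction on the starting index $k$, starting at $k=m$ and decreasing to $k=\ell+1$. The base case $k=m$ is immediate: the formula gives $w_m = ze_m + \frac{1}{1}\cdot e_{m+1} = ze_m + e_{m+1}$, which is precisely the single vector corresponding to $i=m$, so $\{w_m\}$ is trivially an orthogonal basis for its own span.

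For the inductive step, I would assume $\{w_{k+1},\dots,w_m\}$ as given by the formula is an orthogonal basis for $\{ze_i+e_{i+1}:k+1\le i\le m\}$, and then Gram--Schmidt orthogonalize $v := ze_k + e_{k+1}$ against this basis. The key structural observation is that each $w_j$ for $j\ge k+1$ is supported on coordinates $\{j,j+1,\dots,m+1\}$, while $v$ is supported on $\{k,k+1\}$; hence the supports of $v$ and $w_j$ are disjoint for $j\ge k+2$, so $\angles{v,w_j}=0$ and only the projection onto $w_{k+1}$ survives. The $e_k$-component of $w_{k+1}$ is zero and its $e_{k+1}$-component is $z$, so $\angles{v,w_{k+1}}=\zbar$. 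The norm $\vnorm{w_{k+1}}^2$ follows from the same geometric-series telescoping used for \eqref{topdownlength}, yielding
$$\vnorm{w_{k+1}}^2 = \frac{\sum_{i=0}^{m-k}\abs z^{2i}}{\sum_{i=0}^{m-k-1}\abs z^{2i}}.$$

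Next I would set $w_k := v - \frac{\zbar}{\vnorm{w_{k+1}}^2}w_{k+1}$ and expand. The $ze_k$ term passes through untouched; the $e_{k+1}$ coefficient simplifies to $1 - \frac{\abs z^2 \sum_{i=0}^{m-k-1}\abs z^{2i}}{\sum_{i=0}^{m-k}\abs z^{2i}} = \frac{1}{\sum_{i=0}^{m-k}\abs z^{2i}}$; and for $j=k+2,\dots,m+1$ the identity $-\zbar\cdot(-\zbar)^{j-k-2} = (-\zbar)^{j-k-1}$ collapses the remaining terms into the claimed geometric pattern. The coefficient at $e_j$ for $j=k+1$ is $(-\zbar)^0=1$, matching the overall formula $w_k = ze_k + \frac{1}{\sum_{i=0}^{m-k}\abs z^{2i}}\sum_{j=k+1}^{m+1}(-\zbar)^{j-k-1}e_j$.

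The calculation follows the template of Lemma~\ref{topdown} quite closely; the only real point of care is the complex conjugation. In the top-down basis the leading coefficient at the \emph{top} of the support was $1$ (so inner products produced $z$, and subsequent coefficients alternated in $-z$); here the leading coefficient $z$ sits at the \emph{bottom} of the support, so inner products produce $\zbar$ and the coefficient pattern alternates in $-\zbar$. I do not expect any step to be substantively harder than in the previous lemma; the main obstacle is simply careful bookkeeping of the conjugates and the telescoping of the geometric sums in the norm computation.
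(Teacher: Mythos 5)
Your proof is correct and matches the paper's argument in all essentials: reverse induction (adding the vector with the smallest index last), base case the single vector $ze_m+e_{m+1}$, inductive step a one-term Gram--Schmidt projection against $w_{k+1}$ only, the norm computed by the same geometric-series telescoping as in \eqref{topdownlength}, and the inner product $\bar z$ accounting for the conjugation. The only difference is cosmetic---you make explicit the disjoint-support observation that kills $\angles{v,w_j}$ for $j\ge k+2$, which the paper leaves implicit---so this is the same proof, slightly more carefully narrated.
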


\begin{proof}
We proceed by induction on $\ell$.  For the base case of $\ell = m-1$, there is only one vector, thus $w_{\ell+1}=w_{m} = ze_m + e_{m+1}$, as it should.

For the induction step, we will assume the result for $\ell+2$ and show that it must also hold for $\ell +1$.  We will orthogonalize $ze_{\ell+1} + e_{\ell +2}$ with respect to $\{w_{\ell+2},\dots,w_{m}\}$, assuming $w_{\ell+2},\dots, w_m$ have the form above.  Thus, we have
\begin{align*}
 w_{\ell+1} := ze_{\ell+1} + e_{\ell +2} - \angles{ze_{\ell+1} + e_{\ell +2},w_{\ell+2}} \frac{w_{\ell+2}}{\vnorm{w_{\ell+2}}^2}.
\end{align*}
Note that 
\begin{equation}\label{botuplength}
 \vnorm{w_{\ell+2}}^2= \abs z^2 + \frac{\sum_{j=\ell+3}^{m+1} \abs z^{2j-2(\ell +2)-2}}{\lt(\sum_{i=0}^{m-(\ell+2)} \abs z ^{2i}\rt)^2} = \abs z^2 + \frac{1}{\sum_{i=0}^{m-(\ell+2)} \abs z ^{2i}} = \frac{\sum_{i=0}^{m-(\ell+1)} \abs z ^{2i}}{\sum_{i=0}^{m-(\ell+2)} \abs z ^{2i}}.
\end{equation}
Thus
\begin{align*}
 w_{\ell+1} &= ze_{\ell+1} + e_{\ell +2} - \zbar \frac{\sum_{i=0}^{m-(\ell+2)} \abs z ^{2i}}{\sum_{i=0}^{m-(\ell+1)} \abs z ^{2i}}\lt(z e_{\ell+2} + \frac{1}{\sum_{i=0}^{m-(\ell+2)} \abs z^{2i}}\lt(\sum_{j=\ell+2+1}^{m+1} (-\zbar)^{j-(\ell+2)-1}e_j \rt) \rt)\\
&= ze_{\ell+1} + e_{\ell+2} \frac{1}{\sum_{i=0}^{m-(\ell+1)} \abs z ^{2i}} +
\frac{-\zbar}{\sum_{i=0}^{m-(\ell+1)} \abs z ^{2i}}\lt(\sum_{j=\ell+2+1}^{m+1} (-\zbar)^{j-(\ell+2)-1}e_j \rt)\\
&= ze_{\ell+1} +\frac{1}{\sum_{i=0}^{m-(\ell+1)} \abs z ^{2i}}\lt(\sum_{j=\ell+2}^{m+1} (-\zbar)^{j-(\ell+2)}e_j \rt).
\end{align*}
Thus $w_{\ell+1}$ has the desired form, completing the proof by induction. 
\end{proof}

We will now use Lemmas~\ref{topdown} and \ref{botup} to explicitly compute the distance on the left side of \eqref{thedist}, which will lead to a proof that $\{ze_i+e_{i+1}: 1 \le i \le m\}$ is $(\epsilon_{z,m})$-stable.  We will consider 3 cases: where $\ell=1$, where $2\le \ell \le m-1$, and where $\ell = m$.

For the $\ell=1$ case, the distance from $ze_1+e_2$ to $\Span\{ze_i+e_{i+1}: 2 \le i \le m\}$ is the length of $w_1$ using Lemma~\ref{botup}, which is (see \eqref{botuplength})
$$
\vnorm{w_1}=\lt(\frac{\sum_{i=0}^{m} \abs z ^{2i}}{\sum_{i=0}^{m-1} \abs z ^{2i}} \rt)^{1/2} = \lt( \frac{1-\abs z^{2m +2}}{1-\abs z^{2m}} \rt)^{1/2},
$$
assuming $\abs z \ne 1$.  When $\abs z < 1$, we have $\vnorm{w_1} \ge 1$; and when $\abs z > 1$, we have $\vnorm{w_1}\ge (\abs z^{2m+2}/\abs z^{2m})^{1/2} = \abs z> 1$.  Thus, assuming $\abs z \ne 1$, the distance on the left side of \eqref{thedist} is at least $1$ when $\ell =1$.

For the $\ell = m$ case, the distance from $ze_m+e_{m+1} $ to $\Span\{ze_i+e_{i+1}: 1 \le i \le m-1\}$ is the length of $w_m$ using Lemma~\ref{topdown}, which is (see \eqref{topdownlength})
$$
\vnorm {w_m} =\lt(\frac{\sum_{j=0}^{m} \abs z ^{2j}}{\sum_{i=0}^{m-1} \abs z^{2i}}\rt)^{1/2}
= \lt(\frac{1-\abs z^{2m +2}}{1-\abs z^{2m}}\rt)^{1/2},
$$
assuming $\abs z \ne 1$.  When $\abs z < 1$, we have $\vnorm{w_m} \ge 1$; and when $\abs z > 1$, we have $\vnorm{w_m} \ge (\abs z^{2m+2}/\abs z^{2m})^{1/2} = \abs z > 1$. Thus, assuming $\abs z \ne 1$, the distance on the left side of \eqref{thedist} is at least $1$ when $\ell =m$.

For the $2\le \ell \le m-1$ case, the distance from $ze_\ell + e_{\ell+1}$ to $\Span\{ze_i+e_{i+1}: 1\le i \le m, i \ne \ell \}$ is more complicated.  The orthogonal basis for $\{ze_i+e_{i+1}: 1\le i \le m, i \ne \ell \}$ is $\{w_1,\dots,w_{\ell-1}\} \cup \{w_{\ell+1},\dots, w_m\}$, where the first $\ell-1$ vectors are orthogonalized using Lemma\ref{topdown} and the last $m-\ell$ vectors are orthogonalized using Lemma~\ref{botup}.  
The distance in question is equal to the norm of $v$ where
\begin{align*}
v&:=ze_{\ell} + e_{\ell+1} - \frac{z w_{\ell-1}}{\vnorm{w_{\ell-1}} ^2} - \frac{\zbar w_{\ell+1}}{\vnorm{w_{\ell+1}}^2}\\
&= z e_\ell - z  \frac{\sum_{i=0}^{\ell-2} \abs z^{2i}}{\sum_{j=0}^{\ell-1} \abs z ^{2j}}\lt( e_{\ell} + \frac{z}{\sum_{i=0}^{\ell-2} \abs z ^{2i}} \lt( \sum_{j=0}^{\ell-2} (-z)^{\ell-2-j} \abs z^{2j} e_{j+1}\rt)\rt) \\
&\qquad + e_{\ell+1} - \zbar \frac{\sum_{i=0}^{m-(\ell+1)} \abs z ^{2i}}{\sum_{i=0}^{m-\ell} \abs z ^{2i}}\lt(z e_{\ell+1} + \frac{1}{\sum_{i=0}^{m-(\ell+1)} \abs z^{2i}}\lt(\sum_{j=\ell+2}^{m+1} (-\zbar)^{j-(\ell+1)-1}e_j \rt) \rt)\\
&=   \frac{z}{\sum_{i=0}^{\ell-1} \abs z ^{2i}} \lt( \sum_{j=0}^{\ell-1} (-z)^{\ell-1-j} \abs z^{2j} e_{j+1}\rt) +\frac{1}{\sum_{i=0}^{m-\ell} \abs z ^{2i}}\lt(\sum_{j=\ell+1}^{m+1} (-\zbar)^{j-(\ell+1)}e_j \rt).
\end{align*}
The above vector $v$ has norm-squared
\begin{align*}
\vnorm v^2 &= \frac{\abs z^{2\ell}}{\sum_{i=0}^{\ell-1} \abs z^{2i}} + 
\frac{1}{\sum_{i=0}^{m-\ell} \abs z ^{2i}}\\ 
&= \frac{\abs z^{2\ell}(1-\abs z^2)}{1-\abs z^{2\ell}} + \frac{1-\abs z^2}{1-z^{2m-2\ell+2}}\\
&= (1-\abs z^2)\lt( -1 + \frac1{1-\abs z^{2\ell}} + \frac 1{1-\abs{z}^{2m-2\ell+2}}\rt)
\end{align*}
assuming $\abs z \ne 1$.
If $\abs z > 1$, then $\vnorm v \ge (\abs z^2 -1)^{1/2}$; and if $\abs z < 1$, then $\vnorm v \ge (1-\abs z^2)^{1/2}$.  Thus, assuming $\abs z \ne 1$, the distance on the left side of \eqref{thedist} is at least $\abs{1-\abs z^2}^{1/2}$ when $2 \le \ell \le m-1$.

Putting together the three cases above, we have proven \eqref{thedist} with $\epsilon_{z,m} = \min\{1,\abs{1-\abs z^2}^{1/2}\}$, completing the proof of Lemma~\ref{Tbn-epstab}. \hfill $\square$

\subsection{Proof of Lemma~\ref{Tbn-epstab-all}}
\label{ss:Tbn-epstab-all}

Recall that $T_{b,n} + z I$ is a block diagonal matrix in which $b+1$ by $b+1$ each block has the form
$$\lt(\begin{matrix}
z & 1 & 0 & \dots&  0 \\
0 & z & 1 & 0 &  \vdots\\
\vdots &&\ddots & \ddots& \vdots \\
0  &\dots&&z & 1 \\
0 & 0 & \dots & 0 & z 
\end{matrix}\rt).$$
If $b+1$ does not divide $n$ evenly, the last block is a smaller $k$ by $k$ block (where $k \le b$) also having the form above.   The blocks are orthogonal, so to compute the distance from a given row to the span of the other rows in $T_{b,n}+zI$, it is sufficient to compute the distance from the given row to the span of the other rows in the same block.  Thus, we will show that 
\begin{equation}\label{newdist}
\dist(z e_{b+1}, \Span\{ze_i + e_{i+1}: 1 \le i \le b\}) \ge \epsilon_{z, b}
\end{equation}
and that
\begin{equation}\label{thedist2}
\dist(z e_{\ell} + e_{\ell+1}, \Span\lt (\{ze_i + e_{i+1}: 1 \le i \le b, i \ne \ell\} \cup \{ ze_{b+1}\} \rt)) \ge \epsilon_{z, b},
\end{equation}
where $\epsilon_{z,m} \ge \abs { \abs z^2 -1}^{1/2}$ if $\abs z > 1$ and $\epsilon_{z,m} \ge \abs z^{b+1}\abs{1-\abs z^2}^{1/2}$ if $\abs z < 1$. 

To prove \eqref{newdist}, we orthogonalize $\{ze_i + e_{i+1}: 1 \le i \le b\}$ the basis $\{w_1,\dots,w_{b}\}$ with the form in Lemma~\ref{topdown} (letting $\ell=b+1$).  The distance from $ze_{b+1}$ to $\Span\{ze_i + e_{i+1}: 1 \le i \le b\}$ is thus the length of the vector 
\begin{align*}
v&:=ze_{b+1} - \angles{z e_{b+1}, w_b} \frac{w_b}{\vnorm{w_b}^2} \\
&= ze_{b+1} - z 
\frac{\sum_{i=0}^{b-1} \abs z^{2i}}{\sum_{j=0}^{b} \abs z ^{2j}}
\lt(
e_{b+1} + \frac{z}{\sum_{i=0}^{b-1} \abs z ^{2i}} \lt( \sum_{j=0}^{b-1} (-z)^{b-1-j} \abs z^{2j} e_{j+1}\rt)
\rt)\\
&=
\frac{z}{\sum_{i=0}^{b} \abs z ^{2i}} \lt( \sum_{j=0}^{b} (-z)^{b-j} \abs z^{2j} e_{j+1}\rt).
\end{align*}
Thus we have 
$$\vnorm v ^2= \frac{\abs z^{2(b+1)}}{\sum_{i=0}^{b} \abs z^{2i}} = \abs z^{2(b+1)} \frac{1-\abs z^2}{1-\abs z^{2(b+1)}},
$$
assuming $\abs z \ne 1$.
If $\abs z > 1$, then $\vnorm v \ge \abs { \abs z^2 -1}^{1/2}$; and if $\abs z < 1$, then $\vnorm v \ge \abs z^{b+1}\abs{1-\abs z^2}^{1/2}$.

To prove \eqref{thedist2}, we will orthogonalize $\{ze_i + e_{i+1}: 1 \le i \le b, i \ne \ell\} \cup \{ ze_{b+1}\}$ in two parts. The set $\{ze_i+ e_{i+1}: = \le i \le \ell -1\}$ has an orthogonal basis $\{w_1,\dots,w_{\ell-1}\}$ with the form in Lemma~\ref{topdown}, and, as we will show below, the remaining vectors have an orthogonal basis that is as re-scaling of the standard basis.

\begin{lemma}\label{botstan}
The vectors $\{ze_i + e_{i+1}: \ell+1 \le i \le b\} \cup \{ ze_{b+1}\}$, where $e_i$ is the $i$-th standard basis vector, have an orthogonal basis $\{ze_i: \ell+1 \le i \le b+1\}$, which is a re-scaling of the standard basis.
\end{lemma}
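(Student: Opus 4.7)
The plan is to prove Lemma~\ref{botstan} by a straightforward downward induction on the index, showing that each rescaled standard basis vector $ze_i$ for $\ell+1 \le i \le b+1$ lies in the span of $\{ze_i + e_{i+1} : \ell+1 \le i \le b\} \cup \{ze_{b+1}\}$. Throughout we assume $z \ne 0$; if $z = 0$, the vectors $\{ze_i\}$ are all zero, so the statement would be vacuous (and the surrounding lemma ultimately excludes this case via the constants $\abs{\,\abs z^2 - 1\,}^{1/2}$ or $\abs z^{b+1}\abs{1-\abs z^2}^{1/2}$).

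First I would observe the base case directly: $ze_{b+1}$ is already one of the given vectors. For the induction step, assume $ze_{i+1}$ lies in the span for some $\ell+1 \le i \le b$. Then
\[
ze_i = (ze_i + e_{i+1}) - e_{i+1} = (ze_i + e_{i+1}) - \frac{1}{z}(ze_{i+1}),
\]
and both terms on the right side lie in the span, so $ze_i$ does as well. By downward induction, every $ze_i$ with $\ell+1 \le i \le b+1$ is in the span.

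Next I would note that the set $\{ze_i : \ell+1 \le i \le b+1\}$ consists of pairwise orthogonal nonzero vectors (since distinct standard basis vectors are orthogonal and $z \ne 0$), and it contains exactly $b+1-\ell$ elements, which matches the cardinality of the generating set $\{ze_i + e_{i+1} : \ell+1 \le i \le b\} \cup \{ze_{b+1}\}$. Since we have shown one containment of spans and both sets have the same size, the two spans coincide, so $\{ze_i : \ell+1 \le i \le b+1\}$ is indeed an orthogonal basis of the span.

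There is no real obstacle here; the only subtle point worth flagging is the hypothesis $z \ne 0$, which is implicit in the framing of the enclosing Lemma~\ref{Tbn-epstab-all} (where the stated stability bounds are trivially satisfied or meaningless when $z = 0$). All other steps are mechanical linear algebra.
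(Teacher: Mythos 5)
Your proof is correct and rests on the same downward-induction identity $ze_k = (ze_k + e_{k+1}) - z^{-1}(ze_{k+1})$ that the paper uses, there phrased as a single Gram--Schmidt step (the projection coefficient $\angles{ze_k+e_{k+1},\,ze_{k+1}}/\vnorm{ze_{k+1}}^2$ is exactly $1/z$); your repackaging via span containment plus a dimension count is a cosmetic variation. The aside about $z\neq 0$ is a sensible clarification of a hypothesis the paper leaves implicit.
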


\begin{proof}
Let the orthogonal basis be $w_{1}, \dots, w_{b+1}$.  We orthogonalize starting with the vector $w_{b+1} = ze_{b+1}$.  Let $k< b+1$ be and integer, and assume by induction that $w_{j} = ze_{j}$ for $k+1\le j \le b+1$.  
Then 
$$w_k = ze_k + e_{k+1} - \angles{ze_k + e_{k+1}, w_{k+1}}\frac{w_{k+1}}{\vnorm{w_{k+1}}^2}
=ze_k,$$ 
completing the proof by induction.
\end{proof}

We will now compute the distance on the left side of \eqref{thedist2} explicitly using the orthogonal basis $\{w_1,\dots,w_{\ell-1}, ze_{\ell+1}, ze_{\ell+2},\dots,ze_{b+1}\}$, where the $w_i$ have the form described in Lemma~\ref{topdown}.  The distance is the length of the vector $v$ where
\begin{align*}
 v &= ze_\ell+e_{\ell+1} - \angles{ze_\ell+ e_{\ell+1},ze_{\ell+1}}\frac{ze_{\ell+1}}{\abs z^2} - \angles{ze_\ell+e_{\ell+1},w_{\ell-1}}\frac{w_{\ell-1}}{\vnorm{w_{\ell-1}}^2} \\
&=ze_\ell- \angles{ze_\ell,w_{\ell-1}}\frac{w_{\ell-1}}{\vnorm{w_{\ell-1}}^2} \\
&=\frac{z}{\sum_{i=0}^{\ell-1} \abs z ^{2i}} \lt( \sum_{j=0}^{b} (-z)^{\ell-1-j} \abs z^{2j} e_{j+1}\rt).
\end{align*}
Thus we have 
$$\vnorm v ^2= \frac{\abs z^{2\ell}}{\sum_{i=0}^{b} \abs z^{2i}} = \abs z^{2\ell} \frac{1-\abs z^2}{1-\abs z^{2\ell}},
$$
assuming $\abs z \ne 1$.
If $\abs z > 1$, then $\vnorm v \ge \abs { \abs z^2 -1}^{1/2}$; and if $\abs z < 1$, then $\vnorm v \ge \abs z^{\ell}\abs{1-\abs z^2}^{1/2}$.  Since $\ell < b$ by assumption, we have proved \eqref{thedist2}.

Finally, note that in case where $b+1$ does not evenly divide $n$, there is a last diagonal block in $T_{b,n} + zI$ equal to $T_{k}+zI_k$ where $k\le b$.  The arguments above apply to this block as well, with $b+1$ being replaced by $k$, and we need only note that the final lower bounds on $\epsilon_{z,k-1}$ are the same when $\abs z \ge 1$ and slightly better when $\abs z< 1$ than the corresponding bounds on $\epsilon_{z,b}$.  This completes the proof of Lemma~\ref{Tbn-epstab-all}. \hfill $\square$

\section*{Acknowledegments}
I would like to thank Ofer Zeitouni and Alice Guionnet for useful conversations related to this paper, which grew out of our work on \cite{GWZeitouni2011}, which in turn grew out of a very nice conference on random matrices at the American Institute of Mathematics in December 2010.  

\bibliographystyle{abbrv}
\bibliography{fullbibpub,fullbibUNpub}

\end{document}